\newcommand{\Rmnum}[1]{\expandafter\@slowromancap\romannumeral #1@}
\newtheorem{remark}{Remark}[section]
\numberwithin{equation}{section} 
\numberwithin{figure}{section}
\numberwithin{table}{section}
\def\a{\alpha}
\def\G{\Gamma}
\renewcommand\o{\omega}\renewcommand\O{\Omega}
\def\be{{\bf e}}
\def\bn{{\bf n}}
\def\bt{{\bf t}}
\def\sN{{_N}}
\def\cE{{\mathcal E}}
\def\cN{{\mathcal N}}
\def\cT{{\mathcal T}}
\newcommand{\Ei}{{{\cE}^{int}}}
\newcommand{\jump}[1]{\ensuremath{[\![#1]\!]} }
\def\f12{\frac12}
\def\dfrac{\displaystyle\frac}
\newcommand{\gperp}{\nabla^{\perp}}
\newcommand{\bdm}{\begin{displaymath}}
\newcommand{\edm}{\end{displaymath}}
\newcommand{\beq}{\begin{equation}}
\newcommand{\eeq}{\end{equation}}
\newcommand{\beqa}{\begin{eqnarray}}
\newcommand{\eeqa}{\end{eqnarray}}
\newcommand{\beqas}{\begin{eqnarray*}}
\newcommand{\eeqas}{\end{eqnarray*}}
\begin{document}
\title{Residual-based a posteriori error estimation for Immersed Finite Element Methods}

\author{
Cuiyu He\thanks{Department of Mathematics, University College London, London, UK, WC1H 0AY ({\tt c.he@ucl.ac.uk})}
\and 
Xu Zhang\thanks{Department of Mathematics, Oklahoma State University, Stillwater, OK 74078, ({\tt xzhang@okstate.edu}). The research of this author was partially supported by the National Science Foundation grant DMS-1720425.}}

\date{}
\maketitle

\begin{abstract}
In this paper we introduce and analyze the residual-based a posteriori error estimation of the partially penalized immersed finite element method for solving elliptic interface problems. The immersed finite element method can be naturally utilized on interface-unfitted meshes. Our a posteriori error estimate is proved to be both reliable and efficient with reliability constant independent of the location of the interface. Numerical results indicate that the efficiency constant is independent of the interface location and that the error estimation is robust with respect to the coefficient contrast. 
\end{abstract}

\begin{keywords}
immersed finite element methods, a posteriori error estimation, interface problems, residual-based.
\end{keywords}

\begin{AMS}
35R05, 65N15, 65N30
\end{AMS}

\section{Introduction}
Interface problems arise widely in the multi-physics and multi-material applications in the fluid mechanics and material science. The governing partial differential equations (PDEs) for interface problems are usually characterized with discontinuous coefficients that represent different material properties. The solutions to the interface problems often involve kinks, singularities, discontinuity, and other non-smooth behaviors. It is therefore challenging to obtain accurate numerical approximations for interface problems. Moreover, the complexity of the interface geometry may add an extra layer of difficulty to the numerical approximation. 

In general, there are two classes of numerical methods for solving interface problems. The first class of methods use interface-fitted meshes, i.e., the meshes are tailored to fit the interface, see the left plot in Figure \ref{fig: domain}. Methods of this type include classical finite element methods \cite{ChZou:98}, discontinuous Galerkin methods \cite{ArBrCoMa:02} and recently developed weak Galerkin methods \cite{2016MuWangYeZhao}, to name a few.  The second class of methods use unfitted meshes which are independent of the interface, as illustrated in the right plot in Figure \ref{fig: domain}. In the past few decades, many numerical methods based on unfitted meshes have been developed. In the finite difference framework, since the pioneering work of immersed boundary method \cite{2002Peskin} by Peskin, many numerical methods of finite difference type have been developed such as immersed interface method \cite{LeLi:94, 2006LiIto}, matched interface and boundary method \cite{2004ZhaoWei}. In finite element framework, there are quite a few numerical methods developed, for instance, the general finite element method  \cite{BaOs:83}, unfitted finite element method \cite{HaHa:02}, multi-scale finite element method  \cite{HouWuCai:99}, extended finite element method \cite{1999MoesDolbowBelytschko}, and immersed finite element method (IFEM) \cite{Li:98, 2003LiLinWu}. A great advantage for unfitted numerical methods is that they can circumvent (re)meshing procedure which can be very expensive especially for time dependent problems with complex interface geometry or for shape optimization processes that require repeated updates of the mesh. 

\begin{figure}[ht]
\begin{center}
\includegraphics[width=0.4\textwidth]{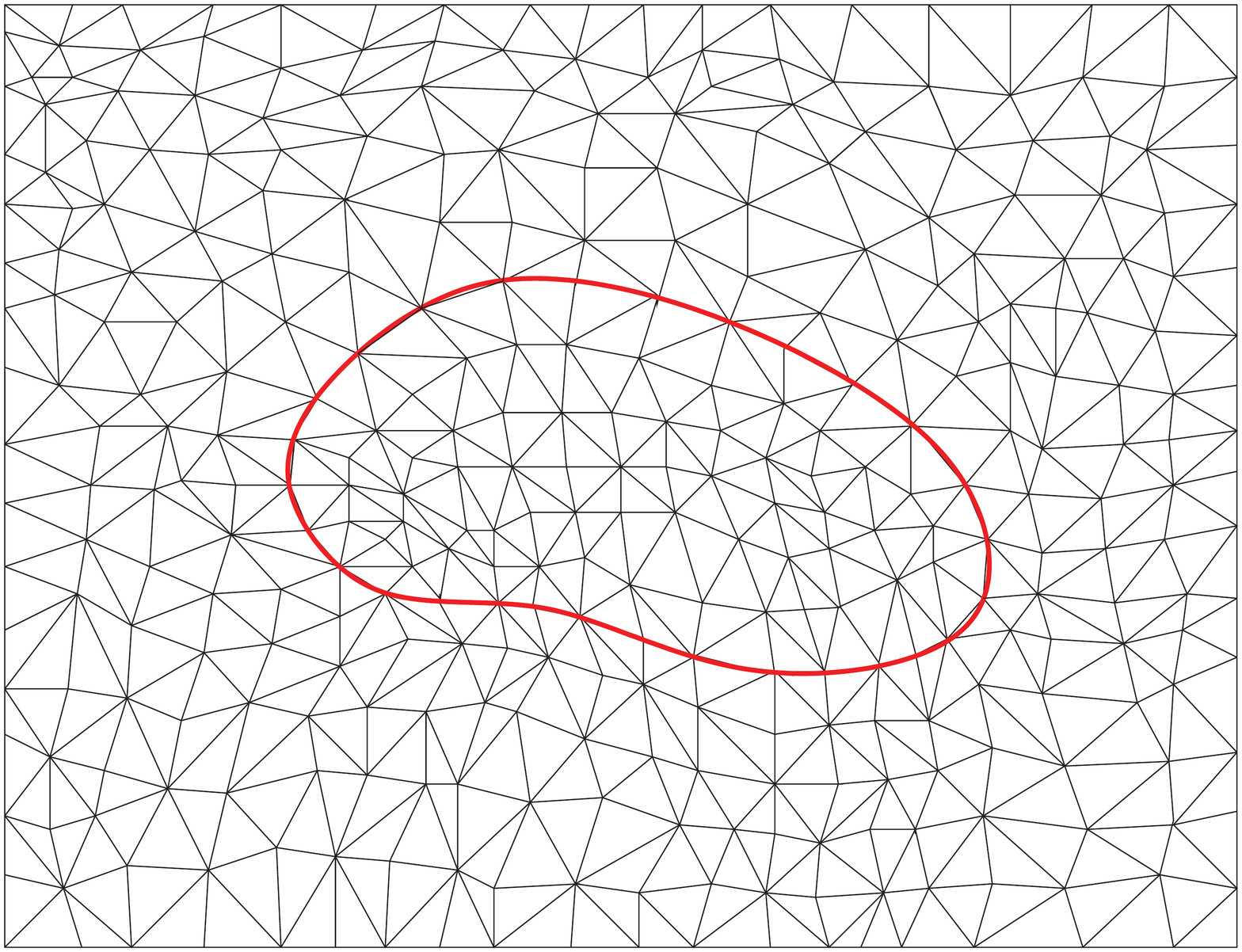}~~
\includegraphics[width=0.4\textwidth]{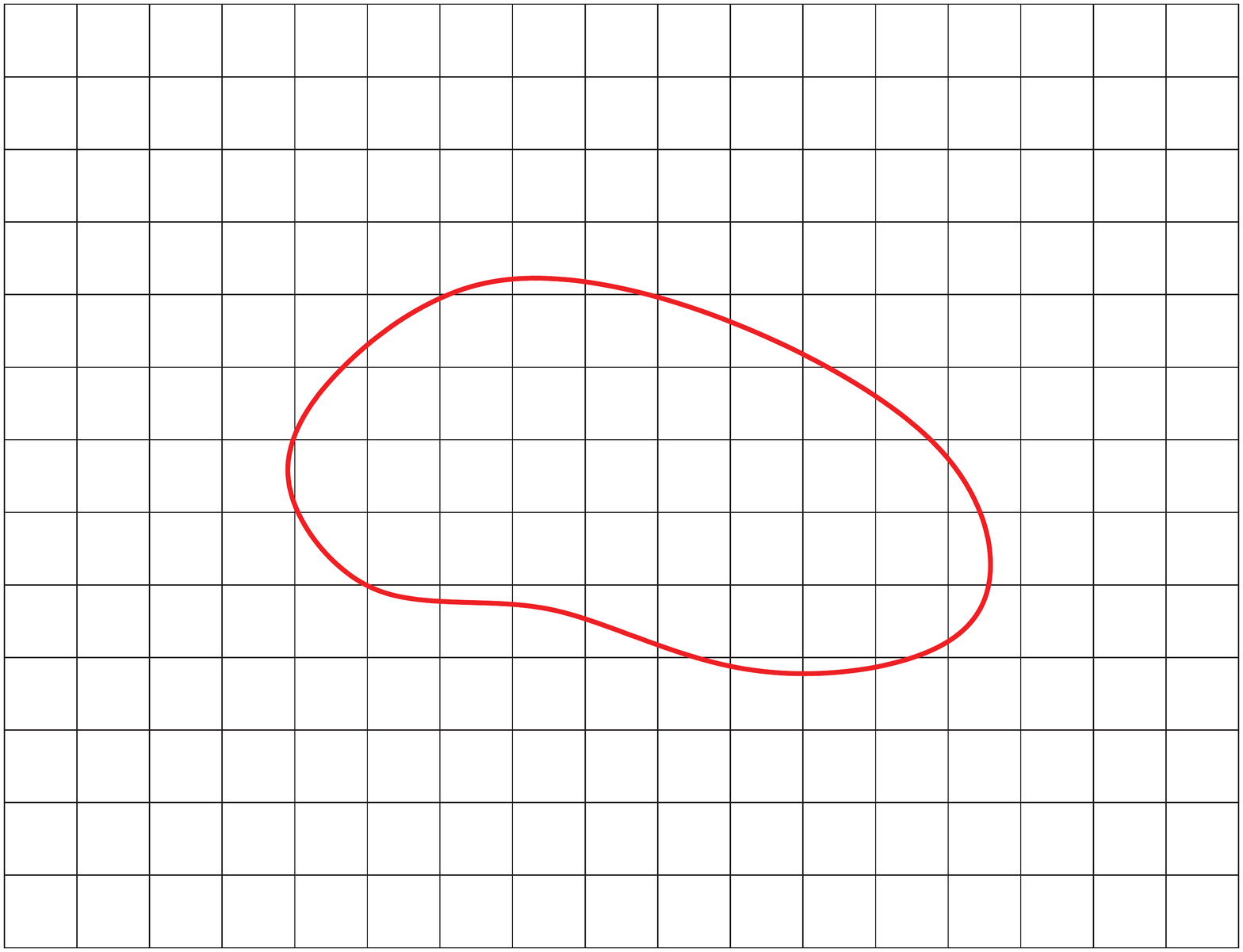}
\end{center}
\caption{An interface-fitted mesh (left), and an unfitted mesh (right).}
\label{fig: domain}
\end{figure}

The IFEM was first developed in \cite{Li:98} for a one-dimensional elliptic interface problem
and then extended to higher-order approximations \cite{AdjeridGuoLin:2017, AdjeridLin:2009, CaZhZh:17, 2017CaoZhangZhangZou} and to higher-dimensional elliptic interface problems \cite{GuSaSa:16, 2011HeLinLin, LiLinLinRo:04, 2018LinSheenZhang, 2010VallaghePapadopoulo, 2011WuLiLai}.
Recently, the partially penalized immersed finite element method was introduced in \cite{LinLinZhang:15}. 
Compared to classical IFEM, the partially penalized IFEM contains normal flux jump terms on interface edges to ensure the consistency of the scheme. In addition, the new IFEM includes a stabilization term on interface edges to guarantee the stability of the scheme. The partially penalized IFEM significantly improves the numerical approximation, especially the accuracy around the interface. The optimal a priori error estimate is theoretically proved for partially penalized IFEM in the energy norm \cite{LinLinZhang:15}. 

We note that the partially penalized IFEM can solve elliptic interface problems accurately on uniform Cartesian meshes provided that the exact solution of interface problems is piecewise smooth, and the contrast of the coefficient is ``moderate". However, for interface problems that also involve singularity or steep gradient,
the partially penalized IFEM alone may not be efficient to obtain an accurate approximation on uniform meshes. In such cases, it is necessary to apply certain adaptive mesh refinement (AMR) strategy to IFEM. The goal for AMR is to obtain an approximate solution within prescribed error tolerance with minimum computational cost which is particularly rewarding for interface problems with non-smooth solutions. 

The key success for AMR is the a posteriori error estimation which provides both global and local information on the approximation error. Moreover, in many applications even without the intention of performing adaptive mesh refinement, the a posteriori error estimation is also important in assessing the quality of the simulation by providing an effective error control.
We note that the a priori error estimation of IFEM is getting mature in the past decade, but the a posteriori error estimation of IFEM is still in the infancy. In this paper, we will develop and analyze a residual-based a posteriori error estimate for the partially penalized IFEM for second-order elliptic interface problems in the two dimensional space.

Comparing to the residual-based error estimation for the classical finite element solution \cite{BeVe:00}, the newly introduced estimator in this work additionally
includes the jump of tangential derivative of numerical solutions on interface edges
besides the standard terms of element residual and jump of normal flux on all edges. This is necessary since the numerical solution is in general discontinuous across the interface edges due to the
construction of the IFEM basis functions. 
Moreover, the new error estimator also includes the geometrical fitting error due to the polygonal approximation of the curved interface.   
Theoretically, we prove that the error estimation is both globally reliable and locally efficient. 
To prove reliability, we use the Helmholtz decomposition and  a $L^2$ representation technique recently introduced in \cite{CaHeZh-mcom:17, CaHeZh-sinum:17}; moreover, we introduce a new type of Cl\'ement-type interpolation in the IFEM space that allows us to take advantage of the error equation of IFEM. 
In the efficiency analysis for IFEM, the technique using the standard bubble functions \cite{Ve:1996} is invalid because the edge jumps of the normal flux and the tangential derivative become piecewise constant on the interface edges. 
\textcolor{black}{Instead, we prove the efficiency in two different approaches that aims to provide an optimal efficiency constant for both regular and irregular interface edges.}

The rest of the article is organized as follows. In Section \ref{sec:2}, we recall the partially penalized IFEM for elliptic interface problems. In Section \ref{sec:3}, we introduce our residual-based error estimator specially designed for IFEM. Section \ref{sec:4}  and  Section \ref{sec:5} are dedicated to the analysis of global reliability and local efficiency, respectively. 
Finally, in  Section \ref{sec:6}, we present several numerical experiments to test the performance of our a posteriori error estimators.

\section{Interface Problems and Partially Penalized IFEM}\label{sec:2}
Let $\O\subset \mathbb{R}^2 $ be a polygonal domain with Lipschitz boundary
$\partial \O = \overline \Gamma_D \cup \overline \Gamma_N$, where 
$ \Gamma_D \cap  \Gamma_N = \emptyset$. Assume that $\mbox{meas}(\Gamma_D) > 0$.
We consider the  elliptic interface problem:
\beq  \label{problem}
		-\nabla \cdot (\a \nabla u) =f \quad \mbox{ in } {\O^+\cup\O^-}
\eeq
with boundary conditions
\[
u= 0 ~\mbox{ on } \Gamma_D~~
\mbox{ and } ~~
-\a \nabla u \cdot \bn=g_\sN ~\mbox{ on } \Gamma_N.
\]
Here,
$f \in L^2(\O)$, $g_\sN \in L^2(\Gamma_N)$, and $\bn$ is the unit vector outward normal to $\partial \O$. The notations $\nabla$ and $\nabla \cdot$ are the gradient and divergence operators, respectively.  Furthermore, assume that $\O$ is separated by a closed smooth interface curve $\Gamma$ into $\O^+$ and $\O^-$ such that 
$\overline{\O} = \overline{\O^+\cup\G\cup\O^-}$. The diffusion coefficient $\a$ is assumed to be a positive piecewise constant function as follows
\[
\a(x,y)=\left\{ \begin{array}{lll}
	\a^+ & \mbox{for} \,(x, y) \in \O^+,\\[2mm]
	\a^-  & \mbox{for} \,(x,y) \in \O^-.
\end{array}
\right.
\]
Denote by $\rho = \dfrac{\a^+}{\a^-}$ the ratio of the coefficient jump.
The solution is assumed to satisfy the following interface jump conditions:
\beq
	\jump{u}_{\G} = 0 \quad \mbox{and} \quad \jump{\alpha \nabla u\cdot \bn}_{\G} = 0,
\eeq
where the jump of a function $v$ across the interface $\G$ is defined by
\[
\jump{v}_{\G} = v^+|_\G - v^-|_\G.
\]

We use the standard notations for the Sobolev spaces.
Let
\[
	H_D^1(\O)=\{v \in H^1(\O) : v =0 \mbox{ on } \Gamma_D\}.
\]
Then the variational problem for \eqref{problem} is to find $u \in H_D^1(\O)$ such that
\beq\label{continuous-weak-solution}
	a(u,v) \triangleq (\a \nabla u, \nabla v) =(f,v) -(g_\sN,v)_{\Gamma_N}, \quad \forall \, v\in H_D^1(\O),
\eeq
where $(\cdot,\cdot)_\o$ is the $L^2$ inner product on $\o$. The subscript $\o$ is omitted when $\o=\O$.

\subsection{Triangulation}
In this paper, we only consider the triangular meshes in two dimensions. 
Let $\cT =\{K\}$ be a triangulation of $\O$ that is regular but not necessarily body-fitted. 
Denote the set of all vertices of the triangulation $\cT$ by 
\[\cN:= \cN_I \cup \cN_D \cup \cN_N\]
where $\cN_I$ is the set of all interior vertices, and $\cN_D$ and $\cN_N$ are the sets of vertices
on $\bar\Gamma_D$ and $\Gamma_N$, respectively. 
Denote the set of all edges of the triangulation $\cT$ by
\[
\cE := \cE_I \cup \cE_D \cup \cE_N
\]
where $\cE_I$ is the set of all interior edges and $\cE_D$ and $\cE_N$ are
the sets of boundary edges on $\Gamma_D$ and $\Gamma_N$, respectively. 
For each element $K \in \cT$, denote by $h_K$ the diameter of $K$,
and by $\cN_K$ and $\cE_K$ the sets of 
all vertices and edges on $K$, respectively.

For simplicity, we assume that the interface cuts the partition
with the following properties:
\begin{description}
\item[(I)] If $\Gamma$ meets an edge at more than one point, then this edge is part of $\Gamma$.
\item[(II)] If the case (I) does not occur,  then $\Gamma$ must intersect a triangle at two points, and these two points must be on different edges of this triangle.
\end{description}
Based on the above assumption, all triangular elements in the partition can be categorized into two classes:  \textit{non-interface elements} that either has no intersection with $\Gamma$ or $\Gamma \cap  K \subset \partial K$, and \textit{interface elements} whose interior is cut through by $\Gamma$.

Denote the set of all interface elements by  $\cT^{int}$.
For each interface triangle $K$ we let $\Gamma_K = \Gamma \cap K$ and 
$\tilde\Gamma_K$ be the line segment approximating $\Gamma_K$ by connecting two endpoints of $\Gamma_K$.
Let $K^+ = K \cap \O^+$ and $K^- = K \cap \O^-$. Also we let $\tilde K^+$ and $\tilde K^-$ be the two sub-elements of $K$ separated by $\tilde \Gamma_K$.
From the setting above, it is easy to see that $\tilde K^\pm$ is either a triangle or a quadrangle. Also we define 
\begin{equation}\label{eq: SK}
S_K \triangleq(K^+ \setminus \tilde K^+) \cup (K^- \setminus \tilde K^-), 
\quad \forall \,K \in \cT^{int},
\end{equation}
which is the region enclosed by $\Gamma_K$ and $\tilde\Gamma_K$. Under the assumption that $\Gamma_K$ is $C^2$-continuous, the area of the $S_K$ is of at least $O(h^3_K)$ \cite{LiLinLinRo:04}. Figure \ref{fig: triangle} provides an illustration of a typical triangular interface element.

\begin{figure}[hbt!]
\begin{center}
\includegraphics[width=.35\textwidth]{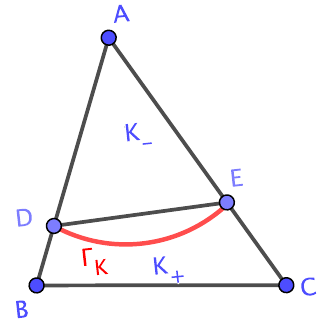}
\end{center}
\caption{A triangular interface element}
\label{fig: triangle}
\end{figure} 

For an edge $F \in \cE$, if $F$ is cut through by $\Gamma$, i.e., $F \cap \Gamma \neq \emptyset$ and
$F \not \subset \Gamma$, then $F$ is called an interface edge and
denote by $\Ei$ the set of all such interface edges.
For each $F \in \cE$, denote by $h_F$ the length of $F$.
Denote by $\bn_F =(n_1, n_2)$ and $\bt_F=(-n_2, n_1)$ the unit vectors
normal and tangential to $F$, respectively. 
 Let $K_{F,1}$ and $K_{F,2}$ be the two elements sharing the common edge $F \in \cE_I$
such that the unit  vector out normal to $K_{F,1}$ coincides with $\bn_F$. When $F\subset \partial \O$, 
$\bn_F$ is the unit outward vector normal to $\partial \O$, and denote by $K_{F,1}$ the boundary element having the
edge $F$. 
For a function $v$ that is defined on $K_{F,1} \cup K_{F,2}$, denote its traces on $F$ by $v|_F^1$ and $v|_F^2$
restricted on $K_{F,1}$ and $K_{F,2}$, respectively.
 Define the jump of a function $v$ on the edge $F$ by
\[
	\jump{v}_F = \left\{
	\begin{array}{lll}
		v|_F^1 -v|_F^2,& \mbox{for }F \in \cE_I,\\[2mm]
		v|_F^1,& \mbox{for } F \in \cE_D \cup \cE_N
	\end{array}
	\right.
\]
and the average of a function $v$ on the edge $F$ by
\[
		\{v\}_F = \left\{
	\begin{array}{lll}
		\left( v|_F^1 +v|_F^2 \right)/2, & \mbox{for }F \in \cE_I,\\[2mm]
		 \,v|_F^1, & \mbox{for } F \in \cE_D \cup \cE_N.
	\end{array}
	\right.
\]
It is easy to verify that 
\beq\label{jump-formula}
	\jump{vw}_F= \jump{v}_F \{w\}_F+ \{v\}_F \jump{w}_F, \quad \forall \, F \in \cE.
\eeq
For simplicity, we may drop the subscript $F$ in the notations $\jump{\cdot}_F$ and $\{\cdot\}_F$ if there is no confusion on where the jump and average are defined.

\subsection{IFEM approximation}
For simplicity, we assume that the interface does not intersect with the boundary, i.e., $\Ei \subset \cE_I$. Let $\tilde \a$ be an approximation of $\a$ such that
\[
	\tilde \a(x,y) = \left\{\begin{array}{lll}
		\a^+ & \mbox{if}& (x,y) \in \tilde K^+,\\[2mm]
		\a^- & \mbox{if}& (x,y) \in \tilde K^-,
	\end{array}
	\right.
	\quad \forall \,(x,y) \in K \in \cT^{int}.
\]
For each interface element $K \in \cT^{int}$, define the local IFE space by 
\[
	\tilde P_1(K) = \left\{ v \in H^1(K):  \tilde \a \nabla v \in H(\mbox{div},K), v|_{\tilde K^\pm} \in P_1(\tilde K^\pm)\right\}
\]
where $P_1(w)$ is the space of all polynomial functions in $w$ of degree no more than $1$.
The global IFE space $\mathcal{S}(\cT)$ is then defined to include all functions such that 
\begin{enumerate}
\item $v|_K \in \tilde P_1(K)$ for all $K \in \cT^{int}$ and $v|_K \in P_1(K)$ for all $K \in \cT/\cT^{int}$, and 
\item $v$ is continuous at every vertex $z \in \cN$.
\end{enumerate}
Note that for each $z \in \cN$, there exists a unique IFE nodal basis function \cite{LiLinLinRo:04,2003LiLinWu}, denoted by $\tilde\lambda_z \in \mathcal{S}(\cT)$, such that
\[
	\tilde\lambda_z(z') = \delta_{zz'}, \quad \forall \, z' \in \cN
\]
where $\delta$ is the Kronecker delta function.

The partially penalized IFEM solution for the interface problem is to find \\$u_\cT \in \mathcal{S}_D(\cT) = \left\{ v \in \mathcal{S}(\cT) \,:\, v =0 \mbox{ on } \Gamma_D \right\}$ such that 
\beq \label{IFE approx}	
	a_h(u_\cT,v)=(f,v) - (g_\sN, v)_{\Gamma_N}, \quad \forall \,v \in \mathcal{S}_D(\cT)
\eeq
where the bilinear form $a_h(w,v )$ is defined by
\begin{eqnarray*} \label{FE approximation}
	a_h(w,v)
	&=&\sum_{K\in \cT} \int_{K} \tilde\a \nabla w  \cdot \nabla v \,dx
	-\sum_{F \in  \Ei} \int_F\{\tilde \a \nabla w \cdot \bn_F \} \jump{v} \,ds\\[2mm]
	&&+ \epsilon \sum_{F\in \Ei} \int_F \{ \tilde \a \nabla v \cdot \bn_F\} \jump{w} \,ds
	+\sum_{F\in \Ei} \gamma\int_F \dfrac{\tilde \a}{h_F} \jump{w}\jump{v} \,ds. 
\end{eqnarray*}
Here $\epsilon$ may take the values $-1$, $0$, and $1$, corresponding to symmetric, incomplete, and non-symmetric IFEM. The constant $\gamma$ is the
stability parameter and needs to be chosen large enough for symmetric and incomplete IFEMs guarantee the coercivity. For non-symmetric IFEM, the constant $\gamma$ is only required to be positive. For more details on the partially penalized IFEM, we refer readers to \cite{LinLinZhang:15}. 

\begin{remark}
By the definition of $\tilde \a$ and $\tilde \Gamma_K$ it is easy to see that
$\tilde \a = \a $ on all $F \in \cE^{int}$.
\end{remark}

\subsection{Inconsistency error} 
Due to the geometrical approximation of the interface curve $\Gamma$ by a polygonal interface $\tilde \Gamma = \bigcup\limits_{K\in\cT^{int}} \tilde \Gamma_K$, the following geometrical inconsistent error exists. By \eqref{IFE approx} and integration by parts we have for any $v \in \mathcal{S}_D(\cT)$
\begin{equation} \label{eq: inconsistency}
\begin{split}
	a_h(u - u_\cT,v)
	=&\sum_{K\in \cT} \int_{K} \tilde\a \nabla u \cdot \nabla v \,dx
	-\sum_{F \in  \Ei} \int_F\{\tilde \a \nabla u  \cdot \bn_F \} \jump{v} \,ds - (f,v) + (g_\sN, v)_{\Gamma_N} \\
	=&\sum_{K\in \cT} \int_{K} \a \nabla u \cdot \nabla v \,dx 
	-\sum_{F \in  \Ei} \int_F\{\a \nabla u  \cdot \bn_F \} \jump{v} \,ds \\
	&+ \sum_{K\in \cT} \int_{K} (\tilde\a - \a) \nabla u \cdot \nabla v \,dx - (f,v) +(g_\sN, v)_{\Gamma_N}
	\\
	=&
	 \sum_{K\in \cT^{int}} \int_{K} (\tilde\a - \a) \nabla u \cdot \nabla v \,dx. 
\end{split}
\end{equation}
\begin{remark}
If the interface $\Gamma$ is a polygon such that $\tilde\alpha = \alpha$ on each interface element, then the term \eqref{eq: inconsistency} vanishes. In this case, the partially penalized IFEM scheme is consistent. In general, for a curved interface $\Gamma$, the global convergence of IFEM will not be affected by such linear approximation of the interface, since the partially penalized IFEM uses piecewise linear approximation \cite{2007Braess}. 
\end{remark}

\section{Residual-Based A Posteriori Error Estimation}\label{sec:3}
In this section, we introduce the residual-based 
error estimator for the partially penalized IFEM.  
We note that the classical residual-based a posteriori error estimation for conforming finite element methods on fitted meshes consists of element residual and the jump of the normal flux on edges. For the IFEM, it is also necessary to include the jump of the tangential 
derivative on interface edges since the IFEM solution
may not be continuous across interface edges.

Define the normal flux jump of $u_\cT$ on each edge by
\[
	j_{n,F}=\left\{
	\begin{array}{lll}
		\jump{\a \nabla u_\cT \cdot \bn_F}_F, & \mbox{for }F \in \cE_I,\\[2mm]
		0,& \mbox{for }F \in \cE_D, \\[2mm]
		\a \nabla u_\cT \cdot \bn_F+g_\sN|_F, & \mbox{for }F \in \cE_N,
	\end{array}
	\right.
\]
and the tangential derivative jump of $u_\cT$ on each edge by 
\[
	j_{t,F}=\left \{
	\begin{array}{lll}
		\jump{ \nabla u_\cT \cdot \bt_F}_F, &\mbox{for } F \in \cE_I,\\[2mm]
		0,  & \mbox{for }F \in \cE_D \cup \cE_N.
	\end{array}
	\right.
\]
Note that on each interface edge $F \in \Ei$ both $j_{n,F}$ and $j_{t,F}$ are piecewise constant.

For all $K \in \cT$ we define the local error indicator $\eta_K$ by
\begin{equation} \label{eta-K}
\begin{split}
	\eta_K^2&=
		\sum \limits_{F \in \cE_K \cap \Ei} \left(\dfrac{h_F}{2} \|\tilde\a_F^{-1/2} j_{n,F}\|_{0,F}^2
			+\dfrac{h_F}{2}  \| \tilde\a_F^{1/2} j_{t,F}\|_{0,F}^2\right) 
			+ \|\tilde \a^{1/2} \nabla u_\cT \|_{S_K}^2
			\\
					&+
			\sum \limits_{F \in \cE_K \cap \cE_I  \setminus  \Ei }
			  \dfrac{h_F}{2} \| \tilde\a_F^{-1/2}j_{n,F}\|_{0,F}^2
			  +
		\sum \limits_{F \in \cE_K \cap \cE_N}  h_F \| \tilde\a_F^{-1/2}j_{n,F}\|_{0,F}^2
\end{split}
\end{equation}
where $\tilde\a_F(x)=\max\left(\tilde\a|_F^1(x), \tilde\a|_F^2(x)\right)$, and $S_K$ is defined in \eqref{eq: SK}. Note that $\tilde\a_F(x)$ is a constant on $F$ when $F \notin \Ei$. The global error estimator $\eta$ is then defined by
\beq\label{est}
\eta=\left( \sum_{K \in \cT} \eta_K^2\right)^{1/2}.
\eeq

\begin{remark}
 If $K$ is a non-interface element, i.e., $\cE_K \cap \Ei = \emptyset$, the first and second terms in \eqref{eta-K} vanish. 
In this case, the local error indicator $\eta_K$ is identical to the residual-based error indicator for 
the classical body-fitting conforming finite element method \cite{BeVe:00}.
\end{remark}

\section{Global Reliability}\label{sec:4}
In this section, we establish the reliability bound of the global estimator $\eta$ given in \eqref{est}.
For each $z \in \cN$, let $\o_z$ be the union of all elements sharing
$z$ as a common vertex. 
To this end, let $\cN_\Gamma$ be the set of all vertices $z$ such that 
$\mbox{meas}_{d-1} (\o_z \cap \Gamma) >0 $, and define
\begin{eqnarray*}
	H_{f}(\cT) &=& 
	\left(  \sum_{z \in \cN \setminus (\cN_\Gamma \cup \cN_D)}  
	\dfrac{\mbox{diam}(\o_z)^2}{\a_z} \|f -f_z\|^2_{0,\o_z} \right. \\[2mm]
	&&
	\left. \quad + \sum_{z \in \cN_\Gamma \cup \cN_D} 
	 \left(  \dfrac{\mbox{diam}(\o_z)^2}{\a^-}\|f\|_{0,\o_z^-}^2 + 
	 \dfrac{\mbox{diam}(\o_z)^2}{\a^+}\|f\|_{0,\o_z^+}^2\right) \right)^{1/2}
\end{eqnarray*}
where $\a_z= \a(x)|_{\o_z}$ and
$f_z$ is the average value of $f$ on $\o_z$.

\begin{remark}
The first term in $H_f(\cT)$ is a higher order term for $f \in L^2(\O)$  \cite{CaVe:99}. 
It is also well known that for linear finite element methods the edge residuals are dominant. In our adaptive algorithm the element residual is also omitted.
\end{remark}

\subsection{Helmholtz decomposition}
Let  
\[
H_N^1(\O)=
	\left\{ v \in H^1(\O) \,: \int_\O v \,dx =0 \quad\mbox{and} \quad \dfrac{\partial v}{  \partial t} = 0
	 \mbox{ on } \Gamma_N
	 \right\}.
\]
For $\phi \in H^1(\O)$, define 
the adjoint curl operator  by
$\gperp \, \phi = \left(- \dfrac{\partial \phi}{\partial y}, \,\dfrac{\partial \phi}{\partial x} \right)$.
For each $v \in \mathcal{S}(\cT)$, we define the discrete gradient operator $\nabla_h$ by
\[
	(\nabla_h v)|_K = \nabla (v|_K), \quad \forall \, K \in \cT.  
\]

\begin{lemma}[Helmholtz Decomposition] \label{HD}
Let $u$ and $u_\cT$ be the solutions of \eqref{continuous-weak-solution} and \eqref{IFE approx}, respectively. 
Then there exist uniquely $\phi \in H_{D}^1(\O)$ and $\psi \in H_N^1(\O)$ 
such that
\beq \label{HD:1}
	\a \nabla u - \tilde \a \nabla_h u_\cT=\a \nabla \phi+\gperp\,\psi.
\eeq
Moreover,
\beq \label{HD:3}
( \nabla \phi, \gperp \psi)=0.
\eeq

\end{lemma}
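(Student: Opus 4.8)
The plan is to build the two potentials one at a time, defining $\phi$ as a weighted Riesz projection and then recovering $\psi$ as a stream function for the divergence-free remainder. Write $\bq = \a\nabla u - \tilde\a\nabla_h u_\cT$, which lies in $L^2(\O)^2$ since $u\in H^1(\O)$ and $u_\cT$ is piecewise $H^1$. First I would produce $\phi\in H_D^1(\O)$ as the unique solution of the weighted problem $(\a\nabla\phi,\nabla v)=(\bq,\nabla v)$ for all $v\in H_D^1(\O)$. Existence and uniqueness follow from Lax--Milgram: the form $(\a\nabla\cdot,\nabla\cdot)$ is bounded, and since $\a$ is bounded below by a positive constant and $\mbox{meas}(\Gamma_D)>0$, the Poincar\'e--Friedrichs inequality holds on $H_D^1(\O)$, so the form is coercive.

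Next I would set $\mathbf{r} = \bq - \a\nabla\phi \in L^2(\O)^2$. By the defining equation for $\phi$, $(\mathbf{r},\nabla v)=0$ for every $v\in H_D^1(\O)$. Testing against $v\in C_c^\infty(\O)$ shows $\divv\mathbf{r}=0$ distributionally, so $\mathbf{r}\in\Hdiv$ and its normal trace is well defined; integration by parts against general $v\in H_D^1(\O)$ then forces the natural boundary condition $\mathbf{r}\cdot\bn=0$ on $\Gamma_N$. The key step is to represent this divergence-free field by a stream function: because $\O$ is simply connected, a field with $\divv\mathbf{r}=0$ admits a potential $\psi\in H^1(\O)$, unique up to an additive constant, with $\gperp\psi=\mathbf{r}$; I fix the constant by imposing $\int_\O\psi\,dx=0$. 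To check $\psi\in H_N^1(\O)$ it remains to verify $\partial\psi/\partial t=0$ on $\Gamma_N$. Using $\bt_F=(-n_2,n_1)$ one gets the pointwise identity $\gperp\psi\cdot\bn = -\,\partial\psi/\partial t$ on $\partial\O$, and combined with $\mathbf{r}\cdot\bn=\gperp\psi\cdot\bn=0$ on $\Gamma_N$ this yields exactly the required tangential condition.

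With $\phi$ and $\psi$ in hand, \eqref{HD:1} holds by construction since $\a\nabla\phi+\gperp\psi = \a\nabla\phi+\mathbf{r} = \bq$. Uniqueness follows from uniqueness of the Lax--Milgram solution $\phi$ (which determines $\mathbf{r}$) together with the normalization $\int_\O\psi\,dx=0$. The orthogonality \eqref{HD:3} is then immediate: taking $v=\phi\in H_D^1(\O)$ in $(\mathbf{r},\nabla v)=0$ gives $(\gperp\psi,\nabla\phi)=(\mathbf{r},\nabla\phi)=0$. In fact \eqref{HD:3} can be verified directly for arbitrary $\phi\in H_D^1(\O)$ and $\psi\in H_N^1(\O)$ by integration by parts, using $\divv(\gperp\psi)=0$, that $\phi=0$ on $\Gamma_D$, and that $\gperp\psi\cdot\bn=-\partial\psi/\partial t=0$ on $\Gamma_N$, so the boundary term vanishes on both $\Gamma_D$ and $\Gamma_N$.

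I expect the main obstacle to be the stream-function step: constructing $\psi$ from the divergence-free remainder $\mathbf{r}$ rigorously at the $L^2$/$H^1$ regularity level and matching the boundary data correctly. This relies on the topology of $\O$ (simple connectedness, so there are no cohomological obstructions beyond the $\Gamma_D$/$\Gamma_N$ splitting) and on a careful trace argument to pass from the weak statement $\mathbf{r}\cdot\bn=0$ on $\Gamma_N$ to $\partial\psi/\partial t=0$; everything else reduces to Lax--Milgram and integration by parts.
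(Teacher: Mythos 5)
Your construction is correct and follows essentially the same route as the paper: the paper defines $\phi$ as the solution of the same weighted Neumann/Dirichlet problem (stated in strong form, equivalent to your Lax--Milgram formulation), recovers $\psi$ as a stream function of the divergence-free remainder, verifies $\partial\psi/\partial t=0$ on $\Gamma_N$ from the normal-trace condition, and obtains the orthogonality by integration by parts. Your write-up simply supplies more of the standard details (coercivity via Poincar\'e--Friedrichs, simple connectedness for the stream function) that the paper delegates to its cited references.
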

\begin{proof}
The proof can be referred to \cite{DaDuPaVa:1996, AinsworthRankin:08}. Here we also sketch a proof for the convenience of readers. Let $\tilde \be_\sigma := \a \nabla u - \tilde \a \nabla_h u_\cT$ and $e_\cT = u - u_\cT$.
Assume that $\phi \in H^1(\O)$ solves the following equation:
\[
	- \nabla \cdot \a \nabla \phi = -\nabla_h \cdot \tilde \be_\sigma \quad \mbox{in } \O
\]
with the boundary conditions
\[
	\phi = 0 \quad \mbox{on }\Gamma_D \quad \mbox{and} \quad
	-\a \nabla \phi \cdot \bn = -\tilde \be_\sigma \cdot \bn \quad \mbox{on } \Gamma_N.
\]

Then there exists $\psi \in H^1(\O)$ such that 
\[
	\nabla^\perp \psi = \tilde \be_\sigma - \a \nabla \phi.
\]
Moreover, we have
$ \dfrac{\partial \psi}{\partial t}= \nabla^\perp \psi  \cdot \bn =(\tilde \be_\sigma - \a \nabla \phi) \cdot \bn=0$ on $\Gamma_N$, hence, $\psi \in H_N^1(\O)$.
By integration by parts and the boundary conditions it is easy to check that
\[
	(\nabla \phi, \nabla^\perp \psi) =0.
\] 
This completes the proof of the lemma.
\end{proof}

\begin{lemma} \label{L2 representation with Helmholtz Decomposition}
Let $\phi$ and $\psi$ be given in \eqref{HD:1}.
Then we have the following error representations in the weighted semi-$H^1$ norm:
\begin{equation} \label{p-representation}
\begin{split}
	\|\a^{1/2} \nabla \phi\|_{0,\O}^2&=
	 (f, \phi-v)
	-\sum_{F\in \cE_I \cup \cE_N} \int_F j_{n,F} \{\phi-v\} ds
	\\
	&+\epsilon \sum_{F\in \Ei} \int_F \{\tilde \a \nabla v \cdot \bn_F \} \jump{u_\cT} \,ds
		+
	\sum_{F\in \Ei}  \dfrac{\gamma}{h_F}\int_F\tilde  \a\, \jump{u_\cT}\jump{v} \,ds, 
\end{split}
\end{equation}
for any  $v \in  \mathcal{S}_D(\cT)$ and
\begin{equation}\label{q-representation}
	\|\a^{-1/2} \gperp \psi \|_{0,\O}^2=
	-\sum_{F\in \Ei} \int_F \jump{u_\cT} \big( \gperp \psi \cdot \bn_F \big) ds +
	 ((1 - \tilde \a/\a) \nabla_h u_\cT, \gperp \psi). 
\end{equation}
\end{lemma}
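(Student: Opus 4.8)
The plan is to test the Helmholtz decomposition \eqref{HD:1} against the two components of its own right-hand side, using the orthogonality \eqref{HD:3} to decouple $\phi$ from $\psi$, and then to convert the resulting volume integrals into edge integrals by element-wise integration by parts. Throughout I would lean on the structural fact that, for $v\in\mathcal{S}(\cT)$, the flux $\tilde\a\nabla_h v$ lies in $H(\mathrm{div},K)$ and is piecewise constant on each interface element, so that $\nabla\cdot(\tilde\a\nabla_h u_\cT)=0$ on every $K$; this is what makes the interior residual disappear and leaves only edge contributions, and I would repeatedly invoke the Remark that $\tilde\a=\a$ on edges together with the continuity of IFE functions across non-interface edges.

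For the first identity \eqref{p-representation}, I would start from $\|\a^{1/2}\nabla\phi\|_{0,\O}^2=(\a\nabla\phi,\nabla\phi)$ and substitute $\a\nabla\phi=\a\nabla u-\tilde\a\nabla_h u_\cT-\gperp\psi$ from \eqref{HD:1}. The term $(\gperp\psi,\nabla\phi)$ vanishes by \eqref{HD:3}, and since $\phi\in H_D^1(\O)$ is admissible in \eqref{continuous-weak-solution} we may replace $(\a\nabla u,\nabla\phi)$ by $(f,\phi)-(g_\sN,\phi)_{\Gamma_N}$. Integrating $(\tilde\a\nabla_h u_\cT,\nabla\phi)$ by parts element-wise and collecting edge terms with the product rule \eqref{jump-formula} (using that $\phi$ is single-valued and the definition of $j_{n,F}$ on $\cE_N$), I expect to reach $\|\a^{1/2}\nabla\phi\|_{0,\O}^2=(f,\phi)-\sum_{F\in\cE_I\cup\cE_N}\int_F j_{n,F}\{\phi\}\,ds$. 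The final step introduces the discrete test function: testing the IFEM scheme \eqref{IFE approx} with an arbitrary $v\in\mathcal{S}_D(\cT)$ and performing the same integration by parts on $a_h(u_\cT,v)$ should yield the identity $(f,v)=\sum_{F\in\cE_I\cup\cE_N}\int_F j_{n,F}\{v\}\,ds+\epsilon\sum_{F\in\Ei}\int_F\{\tilde\a\nabla v\cdot\bn_F\}\jump{u_\cT}\,ds+\sum_{F\in\Ei}\frac{\gamma}{h_F}\int_F\tilde\a\jump{u_\cT}\jump{v}\,ds$. Subtracting this relation from the previous display produces exactly \eqref{p-representation}.

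For the second identity \eqref{q-representation}, I would compute $\|\a^{-1/2}\gperp\psi\|_{0,\O}^2=(\a^{-1}\gperp\psi,\gperp\psi)$ and insert $\a^{-1}\gperp\psi=\nabla u-(\tilde\a/\a)\nabla_h u_\cT-\nabla\phi$ from \eqref{HD:1}. Again $(\nabla\phi,\gperp\psi)=0$ by \eqref{HD:3}. The term $(\nabla u,\gperp\psi)$ vanishes: integrating by parts with $\nabla\cdot\gperp\psi=0$ leaves the boundary integral $-\int_{\partial\O}u\,(\partial\psi/\partial t)\,ds$, which is zero because $u=0$ on $\Gamma_D$ and $\partial\psi/\partial t=0$ on $\Gamma_N$ (the defining properties of $H_N^1(\O)$). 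Writing $\tilde\a/\a=1-(1-\tilde\a/\a)$ splits off the second term of \eqref{q-representation}, and it then remains to check $-(\nabla_h u_\cT,\gperp\psi)=-\sum_{F\in\Ei}\int_F\jump{u_\cT}(\gperp\psi\cdot\bn_F)\,ds$; this follows from element-wise integration by parts, the continuity of $\gperp\psi$ across edges, the vanishing of $\jump{u_\cT}$ on non-interface interior edges, and the boundary conditions that kill the $\Gamma_D$ and $\Gamma_N$ contributions as above.

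The integration-by-parts steps themselves are routine; the part demanding genuine care is the edge bookkeeping. I expect the main obstacle to be verifying the cancellation inside $a_h(u_\cT,v)$ that produces the clean relation for $(f,v)$: one must track that the consistency term $-\sum_{F\in\Ei}\int_F\{\tilde\a\nabla u_\cT\cdot\bn_F\}\jump{v}\,ds$ exactly annihilates the average-against-jump contribution generated on interface edges by the integrated volume term, leaving only the normal-flux-against-average, the penalty, and the $\epsilon$-terms. This must be combined at each stage with the twin observations that jump contributions are supported only on $\Ei$ (because IFE functions are continuous across non-interface edges and at all vertices) and that every $\Gamma_D$ and $\Gamma_N$ edge term is eliminated by the respective boundary constraint.
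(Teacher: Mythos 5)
Your proposal is correct and follows essentially the same route as the paper's proof: Helmholtz orthogonality to decouple the two components, element-wise integration by parts exploiting $\nabla\cdot(\tilde\a\nabla_h u_\cT)=0$ and the support of $\jump{v}$ on $\Ei$, and the IFEM scheme \eqref{IFE approx} tested with $v$. The only difference is organizational --- you establish the identity for $(f,v)$ separately and subtract it, whereas the paper splits $\nabla\phi=\nabla_h(\phi-v)+\nabla_h v$ at the outset --- and your argument for \eqref{q-representation} coincides with the paper's.
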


\begin{proof}
Let $v \in \mathcal{S}_D(\cT)$ be arbitrary. Applying  \eqref{HD:3}, \eqref{HD:1}, 
and integration by parts gives 
\begin{equation} \label{L20}
\begin{split}
	&(\a \nabla \phi,\nabla \phi)
	=(\tilde \be_\sigma, \nabla \phi)
	=\Big(\tilde \be_\sigma,\nabla_h ( \phi- v) \Big)
	+\big(\tilde \be_\sigma, \nabla_h v \big)
	\\
  	=& \sum_{K\in \cT} \left( \int_K \big(f,\phi-v \big) \,dx
	+  \int_{\partial K} \big(\a \nabla u \cdot \bn \big) \big(\phi-v \big) \,ds\right) 
	\\
	&-  \sum_{K \in \cT} \left(\tilde \a\nabla u_\cT, \nabla_h ( \phi- v) \right)_K
	+\big(\tilde \be_\sigma, \nabla_h v \big)
	\\
	=&
	(f, \phi -v)  -\sum_{F \in \cE^{int}} \int_F (\a \nabla u \cdot \bn_F) \jump{v} \,ds
	- \sum_{F \in \cE_N} \int_F g_N ( \phi -v) \,ds 
	\\
	&-  \sum_{K \in \cT} \left(\tilde \a\nabla u_\cT, \nabla_h ( \phi- v) \right)_K
	+\big(\tilde \be_\sigma, \nabla_h v \big).
	\end{split}
\end{equation}
Applying integration by parts again gives
\begin{equation} \label{L21}
\begin{split}
	&\sum_{K \in \cT} \left(\tilde\a\nabla u_\cT, \nabla_h ( \phi- v) \right)_K 
	=
	\sum_{K \in \cT} \int_{\partial K} (\tilde\a \nabla u_\cT) \cdot \bn  ( \phi- v)  \,ds
	\\
	=&
	\sum_{F \in \cE_I } \int_F \jump{ \tilde\a \nabla u_\cT \cdot \bn_F} \{\phi-v\} \,ds
	-\sum_{F \in \cE^{int}} \int_F \{\tilde\a \nabla u_\cT \cdot \bn_F \} \jump{v} \,ds 
	\\
	&+
	\sum_{F \in \cE_N} \int_F (\tilde \a \nabla u_\cT \cdot \bn_F) (\phi-v) \,ds.
\end{split}
\end{equation}
The last equality used 
\eqref{jump-formula}, $\phi-v =0$ on $\Gamma_D$, and the facts that
 \[
 	\jump{v}_F=0, \quad \forall \, F \in \cE_I \setminus \Ei \quad \mbox{and} \quad
	\jump{\phi}_F =0, \quad \forall \, F \in \cE_I.
	\]
By integration by parts and \eqref{IFE approx} we also have
\begin{equation} 
\begin{split}
\big(\tilde \be_{\sigma} , \nabla_h v \big)
=&
(\a \nabla u, \nabla_h v) - (\tilde \a \nabla_h u_\cT, \nabla_h v) 
\\
=&\sum_{F\in \Ei} \int_F \{ \a \nabla u \,\cdot\, \bn_F\} \jump{v} \,ds
-
\sum_{F\in \Ei} \int_F \{ \tilde \a \nabla u_\cT \,\cdot\, \bn_F\} \jump{v} \,ds
\\&
+\epsilon \sum_{F\in \Ei} \int_F \{ \tilde\a \nabla v \,\cdot\, \bn_F\} \jump{u_\cT} \,ds
	+\sum_{F\in \Ei} \dfrac{\gamma }{h_F}\int_F \tilde\a \, \jump{u_\cT}\jump{v} \,ds,
\end{split}
\end{equation}
which, together with \eqref{L20} and \eqref{L21}, gives  \eqref{p-representation}.

To prove \eqref{q-representation}, by \eqref{HD:3}, \eqref{HD:1}, integration by parts, and the facts that
\[
	\jump{e_\cT}_F = - \jump{u_\cT}_F 
	\quad \mbox{and} \quad
	\jump{ \gperp \psi \cdot \bn_F}_F = 0, \quad \forall \,F \in \cE_I,
\]
we have
\begin{equation*}
\begin{split}
	&(\a^{-1} \gperp \psi, \gperp \psi)=( \nabla u - (\tilde \a/\a) \nabla_h u_\cT, \gperp \psi) 
	 \\
	 =& ( \nabla u - \nabla_h u_\cT, \gperp \psi) + ((1 - \tilde \a/\a) \nabla_h u_\cT, \gperp \psi)
	 \\
	=&\sum_{K \in \cT} \int_{\partial K} e_\cT (\gperp \psi \cdot \bn) \, ds
	+ ((1 - \tilde \a/\a) \nabla_h u_\cT, \gperp \psi)\\
	= &- \sum_{F \in \Ei}\int_F \jump{u_\cT} \big( \gperp \psi \cdot \bn_F \big)\,ds
	+ ((1 - \tilde \a/\a) \nabla_h u_\cT, \gperp \psi).
\end{split}
\end{equation*}
Hence, we obtain \eqref{q-representation}. This completes the proof of the lemma.
\end{proof}

\subsection{Modified Cl\'ement-type interpolation}
Define a modified Cl\'ement-type interpolation operator $\mathcal{I}_h\,:\, H_{D}^1(\O) \rightarrow S_{D}(\cT)$ by
\beq \label{cle:1}
	\mathcal{I}_h(v)=\sum_{z \in \cN } (\pi_z v) \tilde\lambda_z(x)
\eeq
where  $\pi_z$ is defined by
\[
\pi_z(v)= \left\{ 
	\begin{array}{ll}
		\dfrac{\int_{\o_z} \lambda_z v  \,dx}{ \int_{\o_z} \lambda_z \,dx}, & \forall \, z \in \cN \setminus \cN_D, \\[4mm]
		0,& \forall \, z \in \cN_D,
	\end{array}
\right.
\]
where $\lambda_z $  and $\tilde \lambda_z$ are the classical barycentric hat function and the linear IFEM nodal basis function
of $\mathcal{S}(\cT)$ associated to $z$, respectively. Note that
\beq \label{weighted-interpolation}
	(v-\pi_z v, \lambda_z)_{\o_z}=0 \quad \forall \, z \in \cN \setminus \cN_D.
\eeq
By Lemma~6.1 in \cite{CaVe:99} there holds
\beq\label{approximation}
	\| v - \pi_z v\|_{0,\o_z} \le C \mbox{diam}(\o_z) \| \nabla v\|_{0,\o_z}, \quad \forall \, z\in \cN  \quad \mbox{and} \quad
	\forall \,v \in H_D^1(\O).
\eeq

The following lemma provides the approximation and stability properties of the modified Cl\'ement-type interpolation operator.

\begin{lemma}[Cl\'ement-type Interpolation]\label{lem:clement}
Let $v \in H_{D}^1(\O)$, and $\mathcal{I}_h v \in \mathcal{S}_D(\cT)$ be the interpolation of $v$ defined in \eqref{cle:1}. 
Then there exists a constant  $C>0$ that is independent of the mesh size and the location of the interface such that
\begin{align}
	 \big\|v-\mathcal{I}_h v \big\|_{0,K} &\le C h_K \big\|\nabla v\big\|_{0,\o_K},
	 \quad \forall \, K  \in \cT, \label{cle:2}
	 \\ 
	\big\|\nabla (v-\mathcal{I}_h v) \big\|_{0,K} &\le C  \big\|\nabla v \big\|_{0,\o_K},
	\quad \quad \;\forall \,K  \in \cT, \;   \label{cle:3}
	\\
	\big\| (v-\mathcal{I}_h v)|_{K} \big\|_{0,F} &\le C h_F^{1/2} \big\|\nabla v\big\|_{0,\o_{K}},
	\; \; \forall  \,F \in \cE_K,    \;K \in \{K_{F,1}, K_{F,2}\} , \label{cle:4}
\end{align}
where 
$\o_K$ is the union of all elements sharing at least one vertex with $K$.
\end{lemma}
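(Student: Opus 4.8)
The plan is to establish the three bounds \eqref{cle:2}--\eqref{cle:4} by comparing the modified Cl\'ement interpolant $\mathcal{I}_h v$ against the classical Cl\'ement interpolant built from the barycentric hat functions, and then leveraging the existing weighted estimate \eqref{approximation} together with uniform bounds on the IFE nodal basis functions $\tilde\lambda_z$. The key observation is that $\mathcal{I}_h v = \sum_z (\pi_z v)\tilde\lambda_z$ differs from a standard interpolant only in that the usual hat functions $\lambda_z$ have been replaced by the IFE basis functions $\tilde\lambda_z$. On non-interface elements $\tilde\lambda_z = \lambda_z$ and the estimates reduce to the classical Cl\'ement bounds, so the entire difficulty is concentrated on interface elements.

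\textbf{Step 1 (reduction to a local partition-of-unity estimate).} First I would fix an element $K$ and write $v - \mathcal{I}_h v = \sum_{z\in\cN_K}(v-\pi_z v)\tilde\lambda_z + v\bigl(1-\sum_{z\in\cN_K}\tilde\lambda_z\bigr)$ on $K$, using that $\tilde\lambda_z$ vanishes on $K$ for $z\notin\cN_K$. Since the IFE basis functions form a partition of unity ($\sum_z\tilde\lambda_z\equiv 1$, which follows from their reproduction of constants — a standard property of the IFE space, see \cite{LiLinLinRo:04,2003LiLinWu}), the second term drops out and we are left with $v-\mathcal{I}_h v = \sum_{z\in\cN_K}(v-\pi_z v)\,\tilde\lambda_z$ on $K$.

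\textbf{Step 2 (uniform $L^\infty$ and $W^{1,\infty}$-scaled bounds on $\tilde\lambda_z$).} The crucial ingredient I would prove or invoke is that the IFE nodal basis functions are bounded uniformly in $K$ and in the interface location: there is a constant $C$, independent of $h_K$ and of how $\Gamma$ cuts $K$, with $\|\tilde\lambda_z\|_{0,\infty,K}\le C$ and $\|\nabla\tilde\lambda_z\|_{0,\infty,K}\le C h_K^{-1}$. This interface-location independence is exactly what makes the constant $C$ in the statement uniform, and it is the main obstacle: unlike the smooth hat functions, the $\tilde\lambda_z$ are piecewise linear across $\tilde\Gamma_K$ with coefficients depending on the cut geometry, so one must show these coefficients stay bounded as $\Gamma_K$ degenerates toward a vertex. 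I would cite the known stability of the IFE basis from \cite{LiLinLinRo:04} to supply these $L^\infty$ and gradient bounds without redoing the delicate geometric estimates.

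\textbf{Step 3 (assembling the three estimates).} With Steps 1--2 in hand, \eqref{cle:2} follows from $\|v-\mathcal{I}_h v\|_{0,K}\le\sum_{z\in\cN_K}\|\tilde\lambda_z\|_{0,\infty,K}\|v-\pi_z v\|_{0,K}\le C\sum_{z}\operatorname{diam}(\o_z)\|\nabla v\|_{0,\o_z}$ by \eqref{approximation}, and summing over the finitely many vertices of $K$ bounds this by $Ch_K\|\nabla v\|_{0,\o_K}$. For \eqref{cle:3} I would use the product rule $\nabla(v-\mathcal{I}_h v)=\sum_z\bigl((v-\pi_z v)\nabla\tilde\lambda_z+\tilde\lambda_z\nabla(v-\pi_z v)\bigr)$; the second sum is $\sum_z\tilde\lambda_z\nabla v$ which is controlled by $\|\nabla v\|_{0,K}$ via the partition of unity and the $L^\infty$ bound, while the first sum uses the $h_K^{-1}$ gradient bound against $\operatorname{diam}(\o_z)\|\nabla v\|_{0,\o_z}$ from \eqref{approximation}, the mesh-size factors canceling to give \eqref{cle:3}. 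Finally \eqref{cle:4} follows by a trace inequality on $K$: $\|(v-\mathcal{I}_h v)|_K\|_{0,F}\le C h_F^{-1/2}\|v-\mathcal{I}_h v\|_{0,K}+Ch_F^{1/2}\|\nabla(v-\mathcal{I}_h v)\|_{0,K}$, and substituting \eqref{cle:2} and \eqref{cle:3} yields the claimed $h_F^{1/2}\|\nabla v\|_{0,\o_K}$ bound. Throughout, the finite overlap of the patches $\{\o_z\}_{z\in\cN_K}$ and of the $\o_K$ (from mesh regularity) keeps all summation constants uniform.
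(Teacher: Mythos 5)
Your proposal is correct and follows essentially the same route as the paper: decompose $v-\mathcal{I}_h v=\sum_{z\in\cN_K}(v-\pi_z v)\tilde\lambda_z$ via the partition of unity, invoke the interface-location-uniform bounds $\|\tilde\lambda_z\|_{\infty,K}\le C$ and $\|\nabla\tilde\lambda_z\|_{\infty,K}\le Ch_K^{-1}$ from \cite{LiLinLinRo:04}, and combine with \eqref{approximation}. The only cosmetic difference is in \eqref{cle:4}, where you apply the trace inequality to the whole difference and substitute \eqref{cle:2}--\eqref{cle:3}, while the paper first peels off $\tilde\lambda_z$ by its $L^\infty$ bound and traces $v-\pi_z v$ term by term; the two are equivalent.
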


\begin{proof}
	Without loss of generality, assume that $K \in \cT$ is an interior element.
	To prove \eqref{cle:2}, by the partition of unity, the triangle inequality,  the boundedness of IFE basis functions  
	$\|\tilde\lambda_z\|_{\infty,K} \le C(\rho)$ (Theorem 2.4, \cite{LiLinLinRo:04}), and \eqref{approximation},
	we have
	\begin{eqnarray*}\label{cle:2a}
	\big\|v-\mathcal{I}_h v \big\|_{0,K} &\le& \sum_{z\in \cN_K} \big\| (v- \pi_zv) \tilde\lambda_z \big\|_{0,K} 
	\le C \sum_{z\in \cN_K} \big\| v- \pi_z v \big\|_{0,K} \\[2mm]
	& \le& C \sum_{z\in \cN_K} \big\| v- \pi_z v \big\|_{0,\o_z} \le 
	C \sum_{z\in \cN_K} h_K \| \nabla v \|_{0,\o_z} \le C h_K \|\nabla v\|_{0,\o_K}.
	\end{eqnarray*}
	
	To prove \eqref{cle:3}, by the partition of unity, the triangle inequality, the fact that 
	$\|\nabla \tilde \lambda_z\|_{\infty,K} \le C h_K^{-1}$ (Theorem 2.4, \cite{LiLinLinRo:04}),  and \eqref{approximation}, we have
	\begin{eqnarray*}
		\big\|\nabla (v-\mathcal{I}_h v) \big\|_{0,K} &=&
		\left\| \sum_{z \in \cN_K} \nabla \Big((v-\pi_z v)\tilde\lambda_z \Big) \right\|_{0,K}
			\le \sum_{z \in \cN_K} \Big( \big\|\tilde\lambda_z \nabla v \big\|_{0,K} 
			+ \big\|(v-\pi_z v) \nabla \tilde\lambda_z \big\|_{0,K} \Big)\\[2mm]
		&\le& C \left( \big\|\nabla v \big\|_{0,K} + h_K^{-1}\big\|v-\pi_z v \big\|_{0,K}	 \right) 
		\le C \big\|\nabla v \big\|_{0,\o_K}.
	\end{eqnarray*}
	
	Finally, \eqref{cle:4}
	follows from the partition of unity, the triangle inequality, the trace inequality, and \eqref{approximation}:
	\begin{eqnarray*}
		\big\|(v-\mathcal{I}_h v)|_K \big\|_{0,F} &\le& \sum_{z\in \cN_K} \big\|(v-\pi_z v) \tilde \lambda_z|_K\big\|_{0,F}
		\le C \sum_{z\in \cN_K} \big\|(v-\pi_z v)|_K\big\|_{0,F}
		\\[2mm]
		&\le& C \sum_{z \in \cN_K} 
			\left(h_F^{-1/2} \big\| v-\pi_z v \big\|_{0,K} 
			+h_F^{1/2}\big \|\nabla v \big\|_{0,K}\right)	
			\le C h_F^{1/2}\big \|\nabla v \big\|_{0,\o_{K}}.
	\end{eqnarray*}
	This completes the proof of the lemma.
	\end{proof}

\begin{lemma}
There exists a constant $C>0$, independent of the mesh size
and the location of the interface, such that
\beq \label{norm-0-1}
	\| \jump{u_\cT}\|_{0,F} \le C h_F\| j_{t,F}\|_{0,F}, \quad \forall \,F \in \Ei
\eeq
and
\beq\label{norm-1/2-1}
	\| \jump{u_\cT}\|_{1/2,F} \le h_F^{1/2}\|j_{t,F}\|_{0,F}, \quad \forall \,F \in \Ei.
\eeq
\end{lemma}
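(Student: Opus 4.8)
The plan is to recognize that on each interface edge $F$ the function $\jump{u_\cT}$ is a one-variable function that vanishes at the two endpoints of $F$ and whose tangential derivative is exactly $j_{t,F}$; both estimates then follow from elementary Poincar\'e-type inequalities on the interval $F$.

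Fix $F \in \Ei$ with endpoints $z_1, z_2 \in \cN$, and parametrize $F$ by arclength $s \in [0, h_F]$. Let $u^1$ and $u^2$ be the traces of $u_\cT$ on $F$ from $K_{F,1}$ and $K_{F,2}$; each is continuous and piecewise linear in $s$ (with a single kink at the point where $\Gamma$ meets $F$), so $\jump{u_\cT} = u^1 - u^2 \in H^1(F)$. Two structural facts drive the proof. First, since $\bt_F$ is the unit tangent to $F$, the restriction of $\nabla u_\cT \cdot \bt_F$ from either side coincides with the arclength derivative of the corresponding trace, whence $(\jump{u_\cT})' = \partial_s u^1 - \partial_s u^2 = \jump{\nabla u_\cT \cdot \bt_F}_F = j_{t,F}$. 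Second, because every function in $\mathcal{S}(\cT)$ is single-valued at the vertices, $u^1(z_k) = u_\cT(z_k) = u^2(z_k)$, so $\jump{u_\cT}$ vanishes at both endpoints of $F$. In other words, $\jump{u_\cT} \in H^1_0(F)$ with derivative $j_{t,F}$.

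Given this reduction, \eqref{norm-0-1} is just a Friedrichs inequality. Writing $w = \jump{u_\cT}$ and using $w(0)=0$, the fundamental theorem of calculus and Cauchy--Schwarz give $|w(s)|^2 \le s\,\|j_{t,F}\|_{0,F}^2$; integrating over $s \in [0,h_F]$ yields $\|w\|_{0,F}^2 \le \tfrac{1}{2} h_F^2 \|j_{t,F}\|_{0,F}^2$, which is \eqref{norm-0-1}. For \eqref{norm-1/2-1} I would use the interpolation inequality $\|w\|_{1/2,F} \le \|w\|_{0,F}^{1/2}\,\|w'\|_{0,F}^{1/2}$ valid for $w \in H^1_0(F)$, where $\|\cdot\|_{1/2,F}$ is the real-interpolation norm between $L^2(F)$ and $H^1_0(F)$, the latter carrying the seminorm $\|w'\|_{0,F}$, so that the constant is one. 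Inserting the bound $\|w\|_{0,F} \le h_F\|j_{t,F}\|_{0,F}$ just established gives
\[
  \|\jump{u_\cT}\|_{1/2,F} \le \big(h_F\|j_{t,F}\|_{0,F}\big)^{1/2}\,\|j_{t,F}\|_{0,F}^{1/2} = h_F^{1/2}\|j_{t,F}\|_{0,F},
\]
which is \eqref{norm-1/2-1}.

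The analytic content is light; the main point to get right is the first step. One must verify that the two edge traces agree at the vertices --- so that $\jump{u_\cT}$ genuinely lies in $H^1_0(F)$ rather than merely in $H^1(F)$ --- and that the tangential derivative of the jump is literally $j_{t,F}$. The endpoint vanishing is indispensable: a jump that were constant along $F$ would have $j_{t,F}=0$ yet nonzero $L^2$ norm, so no such bound could hold without it. The only other care needed is to fix the normalization of $\|\cdot\|_{1/2,F}$ so that the constant-one interpolation inequality applies; with the interpolation-space definition this is immediate, and it is the step where I would be most careful to match the paper's convention.
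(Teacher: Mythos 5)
Your proof is correct, but it takes a genuinely different route from the paper's. The paper works on the reference triangle with $F$ on the $x$-axis, exploits the explicit structure of $\jump{u_\cT}$ as a continuous piecewise linear "hat" that vanishes at the endpoints and peaks with value $b$ at the interface point $(a,0)$, and computes everything in closed form: $\|\jump{u_\cT}\|_{0,F}^2=\tfrac13 b^2$, $\|j_{t,F}\|_{0,F}^2=(\tfrac1a+\tfrac1{1-a})b^2\ge 4b^2$, and for the $1/2$-norm it defines $\|\jump{u_\cT}\|_{1/2,F}$ as an infimum of $H^1(K)$-norms over extensions vanishing on $\partial K\setminus F$ and exhibits an explicit piecewise linear extension $v$ with $\|v\|_{1,K}\le 2\|j_{t,F}\|_{0,F}$; uniformity in the interface location is then visible directly from the fact that $\tfrac1a+\tfrac1{1-a}$ only grows as $a\to 0$ or $1$. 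You instead reduce to the two structural facts (the jump lies in $H^1_0(F)$ because IFE functions are single-valued at vertices, and its arclength derivative is exactly $j_{t,F}$) and invoke the one-dimensional Friedrichs inequality and the interpolation inequality between $L^2(F)$ and $H^1_0(F)$. Your argument is shorter, never references the interface location at all (so independence of the cut point is automatic rather than checked), and would extend verbatim to higher-order IFE traces; what it gives up is the explicit constants and, more importantly, it leaves the identification of your interpolation-space $1/2$-norm with the extension-type ($H^{1/2}_{00}$-like) norm the paper actually uses downstream in the duality pairing of Lemma \ref{lemma:noncon} as an equivalence-of-norms step with an unspecified constant --- which is harmless here since the lemma only asserts existence of a constant $C$ (the absence of $C$ in \eqref{norm-1/2-1} as printed is evidently an oversight, since even the paper's own proof produces a factor $2$), but you were right to flag the normalization of $\|\cdot\|_{1/2,F}$ as the one place where conventions must be matched.
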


\begin{proof}
We first prove the results on the reference element $K$ formed by vertices
$((0,0), (1,0), (0,1))$ and let $F$ be the edge of $K$ on the $x$-axis. Without loss of generality, let
$(a,0), 0< a < 1$, be the interface point on $F$ and $\jump{u_\cT}_F(a,0) =b$.
Note that $\jump{u_\cT}_F$ takes the value $0$ at both endpoints $(0,0)$ and $(1,0)$. By direct calculations, we have
\[	
	\|\jump{u_\cT}\|_{0,F}^2 = \dfrac{1}{3}b^2 \quad \mbox{and} \quad
	\| \jump{\nabla u_\cT \cdot \bt}\|_{0,F}^2 
	= \left(\dfrac{1}{a} + \dfrac{1}{1-a}\right) b^2 \ge 4b^2.
\]
Regarding the $H^{1/2}$-norm, we have 
\[
	\|\jump{u_\cT}\|_{1/2,F} = \inf_{v \in \tilde H^1(K)} \|v\|_{1,K}
\]
where 
$\tilde H^1(K) = 
	\{v \in H^1(K) : v = \jump{u_\cT} \,~\mbox{on}\, F, ~v = 0\, ~\mbox{on } \,\partial K \setminus \{F \}$.
In particular, let
\[
	v = \begin{cases}
		\dfrac{b}{a}x &\mbox{ in } K_1 \\
		\dfrac{b}{a-1} (y+x -1) &\mbox{ in } K_2
	\end{cases}
\]
where $K_1$ is the subtriangle fromed by $((0,0),(a,0),(0,1))$ and $K_2 = K \setminus K_1$. It is easy to verify that $v \in \tilde H^1(K)$.
Another direct calculation gives
\[
	\|v\|_{1,K}^2 = \dfrac{1}{12}b^2 + \left( \dfrac{1}{2a} + \dfrac{1}{1-a}\right)b^2 .
\]
It is easy to verify that
\[
	\| \jump{u_\cT}\|_{0,F} \le  \| j_{t,F}\|_{0,F} \quad\mbox{ and} \quad
	\| \jump{u_\cT}\|_{1/2,F} \le  \|v\|_{1,K} \le 2\|j_{t,F}\|_{0,F},
\]
which, together with the scaling argument, gives \eqref{norm-0-1} and \eqref{norm-1/2-1}. This completes the proof of the lemma.
\end{proof}

\begin{lemma} \label{lemma:con}
Let $\phi$ be given in \eqref{HD:1}. There exists a constant $C$ independent of the mesh size and the 
location of the interface such that
\begin{equation}\label{rel:con}
	\|\a^{1/2} \nabla \phi\|_{0,\O} \le
	 C \left( 
	 \sum_{F\in \cE_I \cup \cE_N}   h_F 
	\big\|\tilde \a_F^{-1/2}j_{n,F} \big\|_{0,F}^2 
	+
	 \sum_{F\in \cE^{int}}    h_F
	\big\| \tilde \a^{1/2} j_{t,F} \big\|_{0,F}^2  
	+H_f(\cT)^2
	\right)^{1/2}.
\end{equation}
\end{lemma}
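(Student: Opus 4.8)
The plan is to insert the modified Cl\'ement interpolant $v=\mathcal{I}_h\phi\in\mathcal{S}_D(\cT)$, which is admissible since $\phi\in H_D^1(\O)$, into the $L^2$-representation \eqref{p-representation} and to estimate the four resulting terms on its right-hand side. The goal is to bound each term by one of the quantities appearing in \eqref{rel:con} times the single factor $\|\a^{1/2}\nabla\phi\|_{0,\O}$; dividing through by $\|\a^{1/2}\nabla\phi\|_{0,\O}$ then yields the assertion. Throughout I will use that $\tilde\a$ and $\a$ are comparable up to $\rho$-dependent constants, and that the patches $\o_z$ and $\o_K$ have bounded overlap, so that the vertex- and edge-wise sums can be collapsed into $\|\a^{1/2}\nabla\phi\|_{0,\O}$ after a final Cauchy--Schwarz.

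For the element-residual term $(f,\phi-\mathcal{I}_h\phi)$ I would write, using the partition of unity of the IFE basis, $\phi-\mathcal{I}_h\phi=\sum_{z}(\phi-\pi_z\phi)\tilde\lambda_z$ and split the vertices into two groups. For $z\in\cN\setminus(\cN_\Gamma\cup\cN_D)$ the patch $\o_z$ meets no interface element, hence $\tilde\lambda_z=\lambda_z$, and the orthogonality \eqref{weighted-interpolation} lets me replace $f$ by its mean $f_z$; combined with \eqref{approximation} this produces the first (oscillation) part of $H_f(\cT)$ weighted by $\mathrm{diam}(\o_z)/\a_z^{1/2}$. For $z\in\cN_\Gamma\cup\cN_D$ I would bound $\int_{\o_z}f(\phi-\pi_z\phi)\tilde\lambda_z$ directly, splitting $\o_z$ into $\o_z^+$ and $\o_z^-$ with weights $\a^+$ and $\a^-$ and invoking \eqref{cle:2} (together with $\phi=0$ on $\Gamma_D$ and a Poincar\'e argument for the Dirichlet vertices, where $\pi_z\phi=0$); this yields the second part of $H_f(\cT)$.

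The normal-flux term $-\sum_{F\in\cE_I\cup\cE_N}\int_F j_{n,F}\{\phi-\mathcal{I}_h\phi\}\,ds$ is treated by Cauchy--Schwarz on each edge followed by the trace-approximation bound \eqref{cle:4} applied to each one-sided trace of $\phi-\mathcal{I}_h\phi$, and a final Cauchy--Schwarz over edges produces $\sum_F h_F\|\tilde\a_F^{-1/2}j_{n,F}\|_{0,F}^2$. The two interface-edge terms are the delicate ones. Since $\phi$ is single-valued we have $\jump{\mathcal{I}_h\phi}_F=\jump{\mathcal{I}_h\phi-\phi}_F$, so $\|\jump{\mathcal{I}_h\phi}\|_{0,F}$ is controlled by the two one-sided interpolation errors through \eqref{cle:4}, giving a bound of order $h_F^{1/2}\|\a^{1/2}\nabla\phi\|_{0,\o_F}$, while $\|\jump{u_\cT}\|_{0,F}$ is replaced by $h_F\|j_{t,F}\|_{0,F}$ via \eqref{norm-0-1}. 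For the consistency term I would additionally estimate the averaged flux $\{\tilde\a\nabla(\mathcal{I}_h\phi)\cdot\bn_F\}$ by a trace-plus-inverse inequality for the piecewise-linear IFE function $\mathcal{I}_h\phi$ (gaining $h_F^{-1/2}$) combined with the stability bound \eqref{cle:3}. In both terms the powers of $h_F$ assemble to $h_F^{1/2}$, and a Cauchy--Schwarz over interface edges yields $\sum_{F\in\Ei}h_F\|\tilde\a^{1/2}j_{t,F}\|_{0,F}^2$ multiplied by $\|\a^{1/2}\nabla\phi\|_{0,\O}$.

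The main obstacle is the bookkeeping for the interface-edge terms: one must check that the $\tilde\a_F=\max(\tilde\a|_F^1,\tilde\a|_F^2)$ weighting is carried consistently through the flux and jump estimates so that the constants remain independent of the interface location and of $\rho$, and that the trace-plus-inverse estimate for $\{\tilde\a\nabla(\mathcal{I}_h\phi)\cdot\bn_F\}$ holds uniformly over IFE functions on arbitrarily cut elements. A secondary subtlety is the mismatch between the weight $\lambda_z$ defining $\pi_z$ and the IFE basis $\tilde\lambda_z$ used in $\mathcal{I}_h$; this is exactly why the vertex set must be split at $\cN_\Gamma$, since the orthogonality \eqref{weighted-interpolation} is usable only on patches where $\tilde\lambda_z=\lambda_z$.
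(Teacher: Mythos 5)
Your proposal is correct and follows essentially the same route as the paper: insert the modified Cl\'ement interpolant into \eqref{p-representation}, split the vertex sum at $\cN_\Gamma\cup\cN_D$ using the orthogonality \eqref{weighted-interpolation} only where $\tilde\lambda_z=\lambda_z$, bound the normal-flux and stabilization terms via \eqref{cle:4} together with $\jump{\mathcal{I}_h\phi}=-\jump{\phi-\mathcal{I}_h\phi}$, and handle the consistency term with the IFE trace inequality \eqref{trace:n} plus the stability bound \eqref{cle:3}, converting $\|\jump{u_\cT}\|_{0,F}$ to $h_F\|j_{t,F}\|_{0,F}$ via \eqref{norm-0-1}. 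The subtleties you flag (the $\tilde\a_F$ weighting on split edges and the uniformity of the flux trace bound on cut elements) are exactly the points the paper resolves by splitting $F$ into $F^\pm$ and by citing Lemma~3.2 of the partially penalized IFEM paper.
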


\begin{proof}
By Lemma \ref{L2 representation with Helmholtz Decomposition},
\begin{eqnarray*} 
	\|\a^{1/2} \nabla \phi\|_{0,\O}^2
	&=&
	 (f, \phi-v)
	-\sum_{F\in \cE_I \cup \cE_N} \int_F j_{n,F} \{\phi-v\} ds
	 \\[2mm]
	&&~+ \epsilon \sum_{F\in \Ei} \int_F \{ \tilde \a \nabla v \cdot \bn_F \} \jump{u_\cT} \,ds
		+
	\sum_{F\in \Ei}  \dfrac{\gamma}{h_F}\int_F \tilde \a\, \jump{u_\cT}\jump{v} \,ds\\[2mm]
	&
	 \triangleq&  I_1+I_2+I_3+I_4.
\end{eqnarray*}
Let $v= \phi_I \in \mathcal{S}_D(\cT)$ be the modified Cl\'ement-type interpolation defined in \eqref{cle:1} of $\phi$.
By the partition of unity,  the fact that $\tilde \lambda_z = \lambda_z$ for $z \in \cN\setminus \cN_\Gamma$,
the Cauchy-Schwarz inequality, \eqref{weighted-interpolation}, and \eqref{approximation}, we have
\begin{equation}\label{I-1}
\begin{split}
	I_1
	& =
	\sum_{z \in \cN_\Gamma \cup \cN_D} \bigg(f\tilde\lambda_z , \phi- \pi_z \phi\bigg)_{\o_z}
	+
	\sum_{z \in \cN \setminus (\cN_\Gamma \cup \cN_D)} \bigg((f - f_z) \lambda_z,  \phi- \pi_z \phi\bigg)_{\o_z}
	\\
	&\le
	C\left(  \sum_{z \in  \cN_\Gamma \cup \cN_D} \mbox{diam}(\o_z) \big\| f \big\|_{0,\o_z} \big\|\nabla \phi \big\|_{0,\o_z}
	+
	\sum_{z \in \cN \setminus (\cN_\Gamma \cup \cN_D)} \mbox{diam}(\o_z) \big\| f-f_z \big\|_{0,\o_z} \big\|\nabla \phi \big\|_{0,\o_z}
	 \right) 
	 \\
	&\le C H_f(\cT) \big\|\a^{1/2} \nabla \phi \big\|_{0,\O}.
\end{split}
\end{equation}
For any $F \in (\cE_I \cup \cE_N) \setminus \Ei$,
it follows from the continuity of $\phi-\phi_{I}$ on $F$, Cauchy-Schwarz inequality, and \eqref{cle:4} that
\begin{equation}\label{I_2b}
\begin{split}
	\int_F j_{n,F} \{\phi- \phi_{I}\}\,ds=&\int_F j_{n,F} (\phi- \phi_{I})|_{K_{F,1}}\,ds 
	\le C h_F^{1/2} \big\|  j_{n,F} \big\|_{0,F} \big\|\nabla \phi \big\|_{0,\o_{K_{F,1}}} 
	\\
	\le& C h_F^{1/2} \big\|  \a_F^{-1/2} j_{n,F} \big\|_{0,F} \big\| \a^{1/2} \nabla \phi \big\|_{0,\o_{K_{F,1}}}.
\end{split}
\end{equation}
For any $F \in \Ei$, it then follows from the Cauchy-Schwarz and Young's inequality and \eqref{cle:4} that
\begin{eqnarray*}\label{I_2a}
	&&\int_F j_{n,F} \{ \phi-\phi_{I}\} \,ds \\\
	&=&
	 \int_{F+} j_{n,F} \{ \phi-\phi_{I}\} \,ds +\int_{F^-} j_{n,F} \{ \phi-\phi_{I}\} \,ds\\[2mm]
	 &\le&
	\|j_{n,F}\|_{0,F^+}
	\bigg( \big\|(\phi-\phi_{I})|_{K_{F,1}} \big\|_{0,F^+}+ \big\|(\phi-\phi_{I})|_{0,K_{F,2}} \big\|_{0,F^+}\bigg)\\[2mm]
	&&+
		\|j_{n,F}\|_{0,F^-}
	\bigg( \|(\phi-\phi_{I})|_{K_{F,1}} \big\|_{0,F^-}+ \|(\phi-\phi_{I})|_{K_{F,2}} \big\|_{0,F^-} \bigg)\\[2mm]
	&\le& 
	C \left( \dfrac{1}{\a^+} \|j_{n,F}\|_{0,F^+}^2 
	\!+\!   \dfrac{1}{\a^{-}} \|j_{n,F}\|_{0,F^-}^2  \right)^{1/2}
	\!\! \left( 
	\sum_{K \in \{K_{F,1}, K_{F,2}\}} \big\|\alpha^{1/2}(\phi-\phi_{I})|_{K} \big\|^2_{0,F}
	 \right)^{1/2}\\[2mm]
	&\le&  C h_F^{1/2} \big\|  \tilde\a_F^{-1/2} j_{n,F}\big\|_{0,F}
	 \| \a ^{1/2} \nabla \phi\|_{0,\o_F},
\end{eqnarray*}
where $\o_F = \o_{K_{F,1}} \cup \o_{K_{F,2}}$, which, together with
\eqref{I_2b} and the Cauchy-Schwarz inequality, yields 
\beq \label{I-2}
	I_2 \le 
	C\left(  \sum_{F \in \cE_I \cup \cE_N}  h_F \big\|  \tilde\a_F^{-1/2} j_{n,F}\big\|_{0,F}^2 \right)^{1/2} \|\a^{1/2} \nabla \phi\|_{0,\O}.
\eeq
To bound $I_3$, we apply the following trace inequality 
for functions in the IFEM space (see Lemma 3.2 in \cite{LinLinZhang:15}).
Let $v \in \mathcal{S}_D(\cT)$ be arbitrary, then 
\begin{equation}\label{trace:n}
	 \big\|\tilde\a \nabla v|_K \cdot \bn_F \big\|_{0,F} \le 
	  C h_F^{-1/2}\big\|\tilde\a \nabla v \big\|_{0,K},\quad \forall \,F \in \Ei.
\end{equation}
It now follows  from the Cauchy-Schwarz inequality, \eqref{trace:n}, and 
 \eqref{cle:3} that
\begin{eqnarray*}
I_3	&\le&\sum_{F\in \Ei} \|\jump{u_\cT}\|_{0,F}
	\left( \left\| \tilde \a \nabla \phi_{I}|_{K_{F,1}} \cdot \bn_F \right\|_{0,F}
		+ \left\| \tilde \a \nabla \phi_{I}|_{K_{F,2}}\cdot \bn_F \right\|_{0,F}\right)
		 \\[2mm]
	&\le& \sum_{F\in \Ei} C \, h_F^{-1/2} \|\jump{u_\cT}\|_{0,F} 
	\bigg( \left \| \tilde\a \nabla \phi_{I} \right \|_{0,K_{F,1}}
		+ \left\| \tilde\a \nabla \phi_{I} \right\|_{0,K_{F,2}}
	\bigg) \\[2mm]
	&\le&C \left( \sum_{F\in \Ei}   h_F^{-1} 
	\| \tilde \a_F^{1/2}\jump{u_\cT}\|_{0,F}^2 \right)^{1/2}
	\|\a^{1/2}\nabla \phi\|_{0, \O}.
\end{eqnarray*}
Together with \eqref{norm-0-1} we obtain
\beq\label{I-3}
I_3	\le
	C \left( \sum_{F\in \Ei}   h_F 
	\|\tilde \a_F^{1/2} j_{t,F}\|_{0,F}^2 \right)^{1/2}
	\|\a^{1/2}\nabla \phi\|_{0,\O}.
\eeq
To bound $I_4$, we use the fact that
\[
	\int_F \tilde\a \jump{u_\cT} \jump{\phi_{I}} \,ds = 
	-\int_F \tilde \a \jump{u_\cT} \jump{\phi- \phi_{I} }\,ds
	\quad \forall \,F \in \Ei,
\]
 the Cauchy-Schwarz and triangle inequalities, and \eqref{cle:4} to obtain
\begin{eqnarray*}
	I_4
	&=&-\sum_{F\in \cE^{int}}\dfrac{\gamma}{h_F} \int_F \tilde \a \,\jump{u_\cT} \jump{\phi-\phi_{I}} \,ds\nonumber\\[2mm]
	&\le& C \sum_{F\in \cE^{int}} \dfrac{\gamma  }{h_F}  \|\tilde \a_F^{1/2} \jump{u_\cT}\|_{0,F}
		\left( \|\a^{1/2}(\phi-\phi_{I})|_{K_{F,1}} \|_{0,F}+ \|\a^{1/2}(\phi-\phi_{I})|_{K_{F,2}} \|_{0,F}\right)\nonumber\\[2mm]
		&\le&
		 C \bigg( \sum_{F\in \cE^{int}}  h_F^{-1} 
	\|\tilde \a_F^{1/2} \jump{u_\cT}\|_{0,F}^2 \bigg)^{1/2}
	\|\a^{1/2}\nabla \phi\|_{0,\O},
\end{eqnarray*} 
which, combining with \eqref{norm-0-1}, yields
\beq\label{I-4}
I_4	\le C \left( \sum_{F\in \cE^{int}}  h_F
	\|\tilde \a_F^{1/2} j_{t,F}\|_{0,F}^2 \right)^{1/2}
	\|\a^{1/2}\nabla \phi\|_{0,\O}.
\eeq

Finally, \eqref{rel:con} is a direct consequence of \eqref{I-1}, \eqref{I-2}, \eqref{I-3}, \eqref{I-4} and the
Young's inequality.
This completes the proof of the lemma.
\end{proof}

\begin{lemma} \label{lemma:noncon}
Let $\psi$ be given in \eqref{HD:1}. There exists a constant $C$ independent of the mesh size and the location of the interface such that
\beq\label{nonconformist-estimate}
	\|\a^{-1/2} \gperp \psi\|_{0,\O} \le  C \bigg(  \sum_{F \in \cE^{int}}  h_F
	\| \tilde\a_F^{1/2} j_{t,F}\|_{0,F}^2
	+
	\sum_{K \in \cT^{int}} \| \tilde \a^{1/2} \nabla u_\cT\|_{0,S_K}^2
	 \bigg)^{1/2}.
\eeq
\end{lemma}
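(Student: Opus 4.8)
The plan is to start from the $L^2$ representation \eqref{q-representation}, which already writes $\|\a^{-1/2}\gperp\psi\|_{0,\O}^2$ as the sum of an interface-edge term $T_1=-\sum_{F\in\Ei}\int_F\jump{u_\cT}\big(\gperp\psi\cdot\bn_F\big)\,ds$ and a volume term $T_2=\big((1-\tilde\a/\a)\nabla_h u_\cT,\gperp\psi\big)$. I would bound each of $T_1,T_2$ by a constant times $\|\a^{-1/2}\gperp\psi\|_{0,\O}$ and then cancel one such factor, so the whole argument reduces to producing the two quantities on the right-hand side of \eqref{nonconformist-estimate}.

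The volume term $T_2$ is the easier one. The key observation is that $1-\tilde\a/\a$ vanishes wherever $\tilde\a=\a$, hence is supported on the slivers $\bigcup_{K\in\cT^{int}}S_K$ of \eqref{eq: SK}, where it equals $1-\a^\mp/\a^\pm$, a constant controlled by $\rho$. I would write $T_2=\sum_{K\in\cT^{int}}\int_{S_K}(1-\tilde\a/\a)\nabla u_\cT\cdot\gperp\psi\,dx$, insert the weights $\tilde\a^{1/2}$ and $\a^{-1/2}$, apply Cauchy--Schwarz on each $S_K$ and then over $K$, and use that the slivers are mutually disjoint. This gives $|T_2|\le C\big(\sum_{K\in\cT^{int}}\|\tilde\a^{1/2}\nabla u_\cT\|_{0,S_K}^2\big)^{1/2}\|\a^{-1/2}\gperp\psi\|_{0,\O}$ with $C=C(\rho)$.

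The interface-edge term $T_1$ is the crux, and this is where the $H^{1/2}$ estimate \eqref{norm-1/2-1} is designed to be used. The difficulty is that $\gperp\psi\cdot\bn_F$ is only an $H^{-1/2}$ normal trace while $\jump{u_\cT}$ is discontinuous at the interface point of $F$, so a naive Cauchy--Schwarz in $L^2(F)$ would not give a constant independent of the interface location. The resolution is to exploit that $\gperp\psi$ is divergence free, so that $\gperp\psi\in H(\divv,K)$ with $\divv(\gperp\psi)=0$. For each $F\in\Ei$ I would take the adjacent element $K=K_{F,1}$ and the extension $w\in H^1(K)$ with $w=\jump{u_\cT}$ on $F$ and $w=0$ on $\p K\setminus F$ that realizes (up to an arbitrarily small margin) the infimum defining $\|\jump{u_\cT}\|_{1/2,F}$ in the preceding lemma, so that $\|\nabla w\|_{0,K}\le\|w\|_{1,K}\le\|\jump{u_\cT}\|_{1/2,F}$. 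The $H(\divv)$ Green's formula together with $\divv(\gperp\psi)=0$ then yields
\[
\int_F\jump{u_\cT}\big(\gperp\psi\cdot\bn_F\big)\,ds=\int_{\p K}w\,(\gperp\psi\cdot\bn)\,ds=\int_K\nabla w\cdot\gperp\psi\,dx,
\]
whence $\big|\int_F\jump{u_\cT}(\gperp\psi\cdot\bn_F)\,ds\big|\le\|\jump{u_\cT}\|_{1/2,F}\,\|\gperp\psi\|_{0,K}$.

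Finally I would insert \eqref{norm-1/2-1} to replace $\|\jump{u_\cT}\|_{1/2,F}$ by $h_F^{1/2}\|j_{t,F}\|_{0,F}$, introduce the weights $\tilde\a_F^{1/2}$ and $\a^{-1/2}$ (the resulting coefficient ratio being bounded by $\max(\rho,\rho^{-1})^{1/2}$), apply the discrete Cauchy--Schwarz over $F\in\Ei$, and use that each element carries at most three edges to pass from $\sum_F\|\a^{-1/2}\gperp\psi\|_{0,K}^2$ to $\|\a^{-1/2}\gperp\psi\|_{0,\O}^2$. This gives $|T_1|\le C\big(\sum_{F\in\Ei}h_F\|\tilde\a_F^{1/2}j_{t,F}\|_{0,F}^2\big)^{1/2}\|\a^{-1/2}\gperp\psi\|_{0,\O}$. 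Adding the two bounds, cancelling one factor of $\|\a^{-1/2}\gperp\psi\|_{0,\O}$ (the statement being trivial when it vanishes), and using $a+b\le\sqrt2\,(a^2+b^2)^{1/2}$ produces \eqref{nonconformist-estimate}. The main obstacle throughout is keeping the constant independent of the interface location: this is exactly what the tailored $H^{1/2}$-norm construction and the estimate \eqref{norm-1/2-1} secure, since the interface point on $F$ may lie arbitrarily close to a vertex.
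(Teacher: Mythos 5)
Your proposal is correct and follows essentially the same route as the paper: both start from the representation \eqref{q-representation}, bound the volume term by Cauchy--Schwarz on the slivers $S_K$, and bound the edge term by pairing $\jump{u_\cT}$ in $H^{1/2}(F)$ against the normal trace of the divergence-free field $\gperp\psi$ and then invoking \eqref{norm-1/2-1}. The only difference is presentational: you unpack the paper's terse ``duality and trace inequalities'' step by exhibiting the explicit $H^1$ extension and applying the $H(\divvr)$ Green's formula, which is exactly the mechanism the paper relies on implicitly.
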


\begin{proof}
For the first term in \eqref{q-representation} it follows from the duality inequality, \eqref{norm-1/2-1}, the trace and Young's inequalities that
\begin{equation}\label{I-21}
\begin{split}
	&\sum_{F \in \cE^{int}} \int_F \jump{u_\cT} (\gperp \psi \cdot \bn_F) \,ds
	\le \sum_{F \in \cE^{int}} \big\| \jump{u_\cT} \big\|_{1/2,F} 
	\left\|\gperp \psi \cdot \bn_F \right\|_{-1/2,F}
	\\
	\le~&C\sum_{F \in \cE^{int}}  h_F^{1/2}
	 \big\| j_{t,F} \big\|_{0,F} \big\|\gperp\psi \big\|_{0,K_{F,1}}
	 \\
	\le~&  C \bigg(  \sum_{F \in \cE^{int}}  h_F
	\| \tilde \a_F^{1/2} j_{t,F}\|_{0,F}^2 \bigg)^{1/2}
	 \|\a^{-1/2} \gperp \psi\|_{0,\O}.
\end{split}
\end{equation}
For the second term in \eqref{q-representation} it follows from the Cauchy-Schwarz inequality that
\begin{equation}\label{I-22}
\begin{split}
&((1 - \tilde \a/\a) \nabla u_\cT, \gperp \psi) \le C
\sum_{K \in \cT^{int}}  \| \tilde\a^{1/2}\nabla u_\cT \|_{0,S_K}  \|\a^{-1/2} \gperp \psi\|_{0,S_K}
\\
\le~ &C \left( \sum_{K \in \cT^{int}} \| \tilde \a^{1/2} \nabla u_\cT\|_{0,S_K}^2 \right)^{1/2} 
\|\a^{-1/2} \gperp \psi\|_{0,\O}.
\end{split}
\end{equation}
Combining \eqref{I-21}-\eqref{I-22} and \eqref{q-representation} gives \eqref{nonconformist-estimate}.
 This completes the proof of the lemma.
\end{proof}

\begin{theorem}\label{global-reliability}{\em(Global Reliability)}
There exists a constant $C_r>0$ that is independent of the location of the interface and the mesh size, such that
\beq\label{reliability}
	\|\a^{1/2}  ( \nabla u - \nabla_hu_\cT) \|_{0,\O} \le C_r (\eta +H_f(\cT)).
\eeq

\end{theorem}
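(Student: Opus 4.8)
The plan is to assemble the theorem from the two component estimates already established, namely Lemma~\ref{lemma:con} for the conforming (gradient) part $\phi$ and Lemma~\ref{lemma:noncon} for the nonconforming (rotational) part $\psi$ of the Helmholtz decomposition \eqref{HD:1}, after first reducing the target quantity $\a^{1/2}(\nabla u - \nabla_h u_\cT)$ to those two pieces plus a geometric correction.

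First I would rewrite the error flux in terms of the decomposition. Since \eqref{HD:1} controls $\a\nabla u - \tilde\a\nabla_h u_\cT$ rather than $\a(\nabla u - \nabla_h u_\cT)$, I would add and subtract $\tilde\a\nabla_h u_\cT$ to obtain
\[
\a^{1/2}(\nabla u - \nabla_h u_\cT) = \a^{-1/2}\big(\a\nabla u - \tilde\a\nabla_h u_\cT\big) + \a^{-1/2}(\tilde\a - \a)\nabla_h u_\cT .
\]
Substituting \eqref{HD:1} into the first term gives $\a^{-1/2}(\a\nabla u - \tilde\a\nabla_h u_\cT) = \a^{1/2}\nabla\phi + \a^{-1/2}\gperp\psi$, so the triangle inequality yields
\[
\|\a^{1/2}(\nabla u - \nabla_h u_\cT)\|_{0,\O} \le \big\|\a^{1/2}\nabla\phi + \a^{-1/2}\gperp\psi\big\|_{0,\O} + \big\|\a^{-1/2}(\tilde\a - \a)\nabla_h u_\cT\big\|_{0,\O}.
\]
The crucial simplification is the orthogonality \eqref{HD:3}, since $(\a^{1/2}\nabla\phi,\a^{-1/2}\gperp\psi)_{0,\O}=(\nabla\phi,\gperp\psi)=0$ produces the Pythagorean identity $\|\a^{1/2}\nabla\phi + \a^{-1/2}\gperp\psi\|_{0,\O}^2 = \|\a^{1/2}\nabla\phi\|_{0,\O}^2 + \|\a^{-1/2}\gperp\psi\|_{0,\O}^2$, decoupling the two parts so that Lemma~\ref{lemma:con} and Lemma~\ref{lemma:noncon} may be invoked independently.

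Two things then remain. The coefficient-inconsistency term $\a^{-1/2}(\tilde\a-\a)\nabla_h u_\cT$ is supported only on the sliver regions $S_K$ defined in \eqref{eq: SK}, where $\tilde\a - \a = \pm(\a^+ - \a^-)$; there one has $\a^{-1/2}|\tilde\a-\a| = |\a^+-\a^-|/\sqrt{\a^+\a^-}\,\tilde\a^{1/2}\le C(\rho)\,\tilde\a^{1/2}$, a factor depending only on the contrast and not on the interface location, so this term is bounded by $C\big(\sum_{K\in\cT^{int}}\|\tilde\a^{1/2}\nabla u_\cT\|_{0,S_K}^2\big)^{1/2}$, which is one of the summands of $\eta$. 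Finally I would verify that the right-hand sides produced by Lemma~\ref{lemma:con} (normal-flux jumps over $\cE_I\cup\cE_N$, tangential jumps over $\cE^{int}$, and $H_f(\cT)$) and by Lemma~\ref{lemma:noncon} (tangential jumps over $\cE^{int}$ together with the $S_K$ gradient terms) are exactly the quantities collected in $\eta^2 = \sum_{K}\eta_K^2$ of \eqref{est}: each interior edge is counted in two elements with weight $h_F/2$, summing to weight $h_F$, and each boundary edge once, so $\eta^2$ dominates all of them, leaving only the $H_f(\cT)^2$ contribution separate. Applying Young's inequality then delivers \eqref{reliability}.

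I expect the main obstacle to be bookkeeping rather than fresh analysis: reconciling the precisely weighted edge and element terms emerging from the two lemmas (with the correct $\tilde\a_F$ weights and $h_F$ powers) against the definition \eqref{est}, and confirming that the contrast factor $C(\rho)$ in the $S_K$ correction stays independent of the interface location even as the sliver $S_K$ degenerates. The genuinely delicate interpolation and trace estimates have already been discharged inside Lemmas~\ref{lemma:con} and~\ref{lemma:noncon}, so the theorem is in essence a reconciliation of those bounds with the estimator and the single extra geometric term.
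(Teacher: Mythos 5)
Your proposal is correct and follows essentially the same route as the paper: the Helmholtz decomposition \eqref{HD:1} with the orthogonality \eqref{HD:3} to decouple the $\phi$ and $\psi$ contributions, Lemmas~\ref{lemma:con} and~\ref{lemma:noncon} to bound them, and the $S_K$ terms of the estimator to absorb the $(\tilde\a-\a)\nabla_h u_\cT$ correction. The only cosmetic difference is that you split $\a^{1/2}(\nabla u-\nabla_h u_\cT)$ pointwise and apply the triangle inequality, whereas the paper expands the squared norm as a quadratic form and discards a nonpositive term before applying Cauchy--Schwarz and Young; both are valid.
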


\begin{proof}
First by adding and subtracting proper terms we have
	\begin{equation} \label{rel:1}
		\begin{split}
			&\| \a^{1/2} ( \nabla u - \nabla_hu_\cT)\|_{0,\O}^2\\
			=&
			(\a \nabla u - \tilde \a \nabla_h u_\cT, \nabla u - \nabla_hu_\cT)
			+( (\tilde \a - \a)\nabla_h u_\cT, \nabla u - \nabla_hu_\cT )
			\\=&
	(\a \nabla u - \tilde \a \nabla_h u_\cT, \a^{-1}(\a\nabla u - \tilde \a \nabla_h u_\cT) )	
	\\&+
	(\a \nabla u - \tilde \a \nabla_h u_\cT,(\tilde \a / \a -1)\nabla_h u_\cT) )+
	( (\tilde \a - \a)\nabla_h u_\cT, \nabla u - \nabla_hu_\cT )
	\\=&
	(\a \nabla u - \tilde \a \nabla_h u_\cT, \a^{-1}(\a\nabla u - \tilde \a \nabla_h u_\cT) )	
	+
	2( \nabla u - \nabla_hu_\cT,(\tilde \a  -\a)\nabla_h u_\cT) 
	\\&
	-
	(\a^{-1}(\tilde \a  -\a)\nabla_h u_\cT,(\tilde \a -\a)\nabla_h u_\cT) )\\
	\le&
	(\a \nabla u - \tilde \a \nabla_h u_\cT, \a^{-1}(\a\nabla u - \tilde \a \nabla_h u_\cT) )	
	+
	2(\nabla u - \nabla_hu_\cT,(\tilde \a  -\a)\nabla_h u_\cT).
		\end{split}
	\end{equation}	
By \eqref{HD:1} and \eqref{HD:3} we have
	\begin{equation} \label{rel:2}
	\begin{split}
	&(\a \nabla u - \tilde \a \nabla_h u_\cT, \a^{-1}(\a\nabla u - \tilde \a \nabla_h u_\cT) )	
	=(\a \nabla \phi + \gperp \psi, \nabla \phi +\a^{-1} \gperp \psi)\\
	=&\|\a^{1/2} \nabla \phi\|_{0,\Omega}^2 + \|\a^{-1/2} \gperp \psi\|_{0,\Omega}^2.
	\end{split}
	\end{equation}
Applying the Cauchy-Schwarz and Young's inequalities also gives	
\begin{equation}\label{rel:3}
(\nabla u - \nabla_hu_\cT,(\tilde \a  -\a)\nabla_h u_\cT) \le C
\left( \sum_{K \in \cT^{int}} \|\tilde \a \nabla_h u_\cT \|_{0,S_K}^2 \right)^{1/2}
\|\a^{1/2} ( \nabla u - \nabla_hu_\cT)\|_{0,\Omega}.
\end{equation}
\eqref{reliability} is then a direct result of \eqref{rel:1}--\eqref{rel:3},  Lemma \ref{lemma:con}, Lemma \ref{lemma:noncon}
and the Young's inequality. This completes the proof of theorem.
\end{proof}

\section{Local Efficiency}\label{sec:5}
In this section, we establish the efficiency bound for the error indicators $\eta_K$ defined in 
\eqref{eta-K} for every element $K \in \cT$. For non-interface elements, the efficiency bound for $\eta_K$ is well known (see \cite{BeVe:00}) and the key technique is using the local edge and element bubble functions. 
However, the same technique without modification becomes invalid for interface elements because jumps of the normal flux and the tangential derivative on the interface edges become piecewise constant. To overcome this difficulty it is natural to design more localized bubble functions that allow us to derive the efficiency bounds on $F^+$ and $F^-$ separately for each $F \in \Ei$.

For each $F \in \Ei$ we first design auxiliary elements and edge bubble functions associated to $F^-$.
Choose $\tilde K_{F,1} \subset K_{F,1}$ and 
$\tilde K_{F,2} \subset K_{F,2}$ to be two regular triangular sub-elements such that
$F^-$ is a common edge of both $\tilde K_{F,1}$ and $\tilde K_{F, 2}$, to be more precise, 
\[
	\partial \tilde K_{F,1}  \cap \partial \tilde K_{F,2} = F^-
	\quad \mbox{and} \quad
	(\partial \tilde K_{F,1}  \cup \partial \tilde K_{F,2}) \cap F^+ =\emptyset.
\]
Note that $\tilde K_{F,1}$ and $\tilde K_{F,2}$ are not necessarily in $K_{F,1}^-$ and $K_{F,2}^-$, respectively. The key requirement here is to make sub-elements $\tilde K_{F,1}, \tilde K_{F,2}$ regular while  $K_{F,1}^-$ and $K_{F,2}^-$ may not be in general.
For each sub-element $\tilde K \in \{\tilde K_{F,1}, \tilde K_{F,2}\}$, define the auxiliary element bubble function $\upsilon_{\tilde K}$ such that
(i) $\upsilon_{\tilde K} \in P_3(\tilde K)$, 
(ii) $\upsilon_{\tilde K}|_{\partial \tilde K} \equiv 0$ and 
(iii) $\upsilon_{\tilde K}$ takes the value $1$ at the barycentric center of $\tilde K$. 
We also define the auxiliary edge bubble function $\upsilon_{F^-}$ for $F^-$ such that
(i) $\upsilon_{F^-}  |_{\tilde K} \in P_2(\tilde K)$ for $\tilde K \in \{ \tilde K_{F,1}, \tilde K_{F,2}\}$,
(ii) $\upsilon_{F^-}|_{\partial  (\tilde K_{F,1} \cup \tilde K_{F,2})} \equiv 0$ and
(iii) $ \upsilon_{F^-} $ takes value $1$ at the middle point of $F^-$.
It is easy to verify that $\upsilon_{\tilde K}$ for $\tilde K \in \{ \tilde K_{F,1}, \tilde K_{F,2}\} $ 
and $\upsilon_{F^-}$ uniquely exist. Let $w_{F^-} = (j_{n,F}|_{F^-})v_{F^-} $ and 
$w_{\tilde K} = v_{\tilde K}f_K$ with $f_K$ being the average of $f$ on $K$.

Applying the continuity of $\a \nabla u \cdot \bn_F$, the divergence theorem, and the Cauchy-Schwarz inequality gives
\begin{eqnarray*}
	\| j_{n,F}\|_{0,F^-}^2 &\le& C \int_{F^-} \jump{\tilde\a \nabla u_\cT \cdot \bn_F} w_{F^-} \,ds \\[2mm]
	&=&
	C\int_{F^-} \left(\jump{\tilde \a \nabla u_\cT \cdot \bn_F} - \jump{\a \nabla u \cdot \bn_F}\right) 	w_{F^-} \,ds\\[2mm]
	&=&
	C\int_{\tilde K_{F,1} \cup \tilde K_{F,2}} (\tilde \a \nabla_h u_\cT - \a \nabla u) \cdot \nabla w_{F^-} \,dx
	+\int_{\tilde K_{F,1} \cup \tilde K_{F,2}} f w_{F^-}\,dx\\[2mm]
	&\le& C
	\| \tilde \a \nabla_h u_\cT - \a \nabla u \|_{0,\tilde K_{F,1} \cup \tilde K_{F,2}} \| \nabla w_{F^-}\|_{0,\tilde 	K_{F,1} \cup \tilde K_{F,2}} +\|f\|_{0,\tilde K_{F,1} \cup \tilde K_{F,2}}\|w_{F^-}\|_{0,\tilde K_{F,1} \cup \tilde K_{F,2}},
\end{eqnarray*}
which, combining with the properties of $w_{F^-}$ that 
\[
	\|\nabla w_{F^-}\|_{0,\tilde K_{F,1} \cup \tilde K_{F,2}} \le C \dfrac{1}{h_F^-} \|w_{F^-}\|_{0,\tilde K_{F,1} \cup \tilde K_{F,2}} \le C\dfrac{1}{\sqrt{h_F^-}} \| j_{n,F}\|_{0,F^-},
\]
yields
\begin{equation} \label{effi-flux-jump}
	\| j_{n,F}\|_{0,F^-} \le C \left( \dfrac{1}{\sqrt{h_F^-}}\| \tilde \a \nabla_h u_\cT - \a \nabla u \|_{0,\tilde K_{F,1} \cup \tilde K_{F,2}} + \sqrt{h_F^-}\|f\|_{0,\tilde K_{F,1} \cup \tilde K_{F,2}}\right).
\end{equation}
We now establish the efficiency bound for the element residual $\|f\|_{0,\tilde K}$ for $\tilde K \in \{ \tilde K_{F,1}, \tilde K_{F,2}\}$. By the property of $w_{\tilde K}$, the triangle inequality, the
divergence theorem, and the Cauchy-Schwarz inequality, we have
\begin{eqnarray*}
	\|f_{K}\|_{0,\tilde K}^2 &\le& 
	C \left(\int_{\tilde K} f w_{\tilde K} \,dx + \| f - f_{K}\|_{0,\tilde K} \|w_K\|_{0,\tilde K} \right) \\[2mm]
	&\le& C \left( \int_{\tilde K} \nabla \cdot(\tilde \a \nabla u_\cT - \a \nabla u )w_{\tilde K} \,dx
	+ \| f - f_{K}\|_{0,\tilde K} \|w_{\tilde K}\|_{0,\tilde K} \right)\\[2mm]
	&\le&C \left( \|\tilde \a \nabla u_\cT - \a \nabla u \|_{0,\tilde K} \|\nabla w_{\tilde K}\|_{0,\tilde K}
	+\| f - f_{K}\|_{0,\tilde K} \|w_{\tilde K}\|_{0,\tilde K} 
	\right),
\end{eqnarray*}
which, combining with similar properties for $w_{\tilde K}$:
\[
	\| \nabla w_{\tilde K}\|_{0,\tilde K} \le C \dfrac{1}{h_F^-} \|w_{\tilde K}\|_{0,\tilde K} 
	\le C\dfrac{1}{h_F^-} \|f_{K}\|_{0,\tilde K},
\]
 gives
\beq \label{effi:elemen-residual}
	h_F^-\|f_{K}\|_{0,\tilde K} \le
	\left( \|\tilde \a \nabla u_\cT - \a \nabla u \|_{0,\tilde K}+ h_F^-\|f - f_K\|_{0,\tilde K}
	\right).
\eeq
By \eqref{effi-flux-jump} and \eqref{effi:elemen-residual}, we have
\[\sqrt{h_F^-}\|j_{n,F}\|_{0, F^-} 
	\le C \sum_{K \in \{K_{F,1}, K_{F,2}\}}
	\left( 
             \|  \a  \nabla u - \tilde \a \nabla u_\cT \|_{0,\tilde K}
             +  h_F^- \|f - f_K\|_{0,\tilde K} \right).
\]
Adding proper weights gives
\beq\label{effi:flux-jump-2}
	\sqrt{h_F^-}\|\tilde \a_F^{-1/2}j_{n,F}\|_{0,F^-} 
	\le  C  \sum_{K \in \{K_{F,1}, K_{F,2}\}} \!\!\!\!\left( 
             \|  \a^{1/2}  (\nabla u -\nabla u_\cT )\|_{0,K}
             +
              \|\tilde   \a^{1/2} \nabla u_\cT\|_{0,S_K}
             +  H_{f,K} \right)
\eeq
where
\[
	H_{f,K} = h_K \| \a^{-1/2}(f -f_K)\|_{0,K}, \quad \forall \, K \in \cT
\]
and the constant $C$ is independent of the mesh size and the location of the interface but may depend on $\rho$. 
Similarly, one can prove that
\[
\|j_{t,F}\|_{0,F^-} \le C \sum_{\tilde K \in \{\tilde K_{F,1}, \tilde K_{F,2}\}}  \dfrac{1}{\sqrt{h_F^-}} \|  \nabla u-\nabla u_\cT\|_{0,\tilde K}
\]
and, after adding proper weights, that
\beq\label{effi:tan-jump}
\sqrt{h_F^-} \|\tilde \a_F^{1/2}j_{t,F}\|_{0,F^-}  \le C \sum_{ K \in \{ K_{F,1},  K_{F,2}\}}  
\| \a^{1/2} (\nabla u-\nabla u_\cT)\|_{0,K}.
\eeq
Similarly, by defining auxiliary bubble functions on $F^+$, we also have the following local efficiency results on $F^+$: 
\beq\label{effi:flux-jump-3}
	\sqrt{h_F^+}\|\tilde \a_F^{-1/2}j_{n,F}\|_{0,F^+} \le C  \sum_{K \in \{K_{F,1}, K_{F,2}\}} \left( 
             \|  \a^{1/2} \nabla (u -u_\cT )\|_{K}
             +
              \|  \tilde \a^{1/2} \nabla u_\cT\|_{S_K}
             +  H_{f,K} \right)
\eeq
and
\beq\label{effi:tan-jump-2}
\sqrt{h_F^+} \|\tilde \a_F^{1/2}j_{t,F}\|_{0,F^+}  \le C \sum_{K \in \{K_{F,1}, K_{F,2}\}}  
\| \a^{1/2} (\nabla u-\nabla u_\cT)\|_{0,K}.
\eeq

For the first case we assume that for each $F \in \Ei$, there exist positive constants $\lambda$ and
$\Lambda$ such that
\beq\label{localtion-assump}
	\lambda \le \dfrac{h_{F}^+}{h_{F}^-} \le \Lambda.
\eeq

\begin{lemma}\label{lem:efficiency1}
Let $u$ and $u_\cT$ be the solution in \eqref{continuous-weak-solution} and \eqref{IFE approx}, respectively. 
Then there exists a constant $C_1$ that is independent of the mesh size but may depend on $\lambda, \Lambda,$ and 
$ \rho$ such that
	\beq \label{local-efficiency}
		\eta_K \le C_1 \left( \| \a^{1/2} (\nabla u- \nabla_h u_\cT) \|_{0,\o_K} 
		+  \sum_{K \subset \o_K} \| \tilde \a^{1/2} \nabla u_\cT\|_{0,S_K}
		+  \sum_{K \subset \o_K} H_{f,K}
		 \right),\; \forall \, K \in \cT.
	\eeq
\end{lemma}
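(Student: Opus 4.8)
The plan is to bound each of the four groups of terms in the definition \eqref{eta-K} of $\eta_K^2$ separately and then sum over the edges of $K$. The region term $\|\tilde\a^{1/2}\nabla u_\cT\|_{S_K}^2$ already appears on the right-hand side of \eqref{local-efficiency}, so it requires no work. For a non-interface interior edge $F\in\cE_K\cap\cE_I\setminus\Ei$ or a Neumann edge $F\in\cE_K\cap\cE_N$, the solution $u_\cT$ is continuous across $F$ and is a genuine $P_1$ function on $F$ from each side (since $\Gamma$ does not cut $F$), so the jump $j_{n,F}$ is constant and the classical residual-based efficiency argument of \cite{BeVe:00} applies through the usual element and edge bubble functions. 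This bounds $h_F\|\tilde\a_F^{-1/2}j_{n,F}\|_{0,F}^2$ by the local error $\|\a^{1/2}(\nabla u-\nabla_h u_\cT)\|^2$ on the two elements adjacent to $F$ plus the oscillation terms $H_{f,K}^2$; when one of these elements is an interface element, the integration by parts additionally produces the discrepancy $(\a-\tilde\a)\nabla_h u_\cT$ supported on $S_K$, which is harmlessly absorbed into the $S_K$-term already present on the right-hand side.

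The genuinely new contribution is the interface-edge part, for which I would invoke the localized bubble-function estimates \eqref{effi:flux-jump-2}, \eqref{effi:tan-jump}, \eqref{effi:flux-jump-3}, \eqref{effi:tan-jump-2} established above. For each $F\in\cE_K\cap\Ei$, the relevant term in $\eta_K^2$ is $\tfrac{h_F}{2}\|\tilde\a_F^{-1/2}j_{n,F}\|_{0,F}^2+\tfrac{h_F}{2}\|\tilde\a_F^{1/2}j_{t,F}\|_{0,F}^2$, and I would split each $L^2(F)$-norm as $\|\cdot\|_{0,F}^2=\|\cdot\|_{0,F^+}^2+\|\cdot\|_{0,F^-}^2$. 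The key is that assumption \eqref{localtion-assump} renders $h_F^+$, $h_F^-$, and $h_F=h_F^++h_F^-$ mutually comparable, since $(1+\lambda)\,h_F^-\le h_F\le (1+\Lambda)\,h_F^-$ and symmetrically for $h_F^+$. Consequently $h_F\|\tilde\a_F^{-1/2}j_{n,F}\|_{0,F^-}^2\le C\,h_F^-\|\tilde\a_F^{-1/2}j_{n,F}\|_{0,F^-}^2$, and the squared form of \eqref{effi:flux-jump-2} bounds the latter by $\|\a^{1/2}(\nabla u-\nabla u_\cT)\|_{0,K}^2$, $\|\tilde\a^{1/2}\nabla u_\cT\|_{0,S_K}^2$, and $H_{f,K}^2$ over $K\in\{K_{F,1},K_{F,2}\}$; the $F^+$ piece is treated identically via \eqref{effi:flux-jump-3}, and the tangential jump via \eqref{effi:tan-jump} and \eqref{effi:tan-jump-2}. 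Thus the whole interface-edge contribution is bounded by $C\sum_{K\in\{K_{F,1},K_{F,2}\}}\big(\|\a^{1/2}(\nabla u-\nabla u_\cT)\|_{0,K}^2+\|\tilde\a^{1/2}\nabla u_\cT\|_{0,S_K}^2+H_{f,K}^2\big)$.

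Finally, I would sum the three groups of bounds over all $F\in\cE_K$. Every element $K_{F,1},K_{F,2}$ adjacent to an edge of $K$ shares a vertex with $K$ and hence lies in the patch $\o_K$, and each such element is counted only a bounded number of times, so taking square roots yields \eqref{local-efficiency} with a constant $C_1$ depending only on the shape regularity, $\lambda$, $\Lambda$, and $\rho$. The main point is conceptual rather than computational: all of the hard bubble-function work is already contained in the edgewise estimates \eqref{effi:flux-jump-2}--\eqref{effi:tan-jump-2}, so the only genuine ingredient here is the comparability \eqref{localtion-assump}, which is precisely what permits the $h_F^\pm$-scaled bounds on $F^\pm$ to be reassembled into the $h_F$-scaled bound on the whole interface edge. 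Without \eqref{localtion-assump} this step would lose a factor comparable to $h_F^+/h_F^-$, which is exactly why a second, assumption-free efficiency estimate must be pursued separately for irregular interface edges.
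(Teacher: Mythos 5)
Your proposal is correct and follows essentially the same route as the paper: the non-interface contributions are handled by the classical bubble-function argument, and the interface-edge terms are assembled from the localized estimates \eqref{effi:flux-jump-2}--\eqref{effi:tan-jump-2} after splitting $\|\cdot\|_{0,F}^2$ into the $F^+$ and $F^-$ pieces. If anything, you are more explicit than the paper about the one place where \eqref{localtion-assump} enters, namely the comparability $h_F\le(1+\Lambda)h_F^-$ (and its counterpart for $h_F^+$) needed to convert the $h_F^\pm$-scaled bounds into the $h_F$-scaled quantity appearing in $\eta_K$.
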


\begin{proof} 
 In the case that K is not an interface element,  
\eqref{local-efficiency} is a well known result (see, e.g., \cite{CaHeZh-mcom:17}).
In the case that $K$ is an interface element, by \eqref{effi:flux-jump-2} and  \eqref{effi:flux-jump-3}, we have
\begin{eqnarray*}
	h_F^{1/2} \|\tilde a_F^{-1/2}j_{n,F}\|_{0,F} &=& 
	h_F^{1/2}  \left(  \|\tilde \a_F^{-1/2}j_{n,F}\|_{0,F^+}^2 +  \| \tilde \a_F^{-1/2}j_{n,F}\|_{0,F^-}^2\right)^{1/2} \\[2mm]
	&\le& 
	C_1\sum_{K \in \{ K_{F,1} , K_{F,2} \}} 
	\left( \| \a^{1/2} \nabla e_\cT\|_{0,K}+ \|\tilde \a^{1/2} \nabla u_{\cT}\|_{0,K} + H_{f,K} \right),
\end{eqnarray*}
where $C_1$ is independent of the mesh size but might depend on $\rho$, $\lambda$, and $\Lambda$.
Similarly, by \eqref{effi:tan-jump} and \eqref{effi:tan-jump-2}, we have
\beq
	h_F^{1/2} \|\tilde \a_F^{1/2}j_{t,F}\|_{0,F} \le C_1\sum_{K \in \{ K_{F,1} , K_{F,2} \}}  \| \a^{1/2} \nabla e_\cT\|_{0,K},
\eeq
where $C_1$ is independent of the mesh size but might depend on $\rho$, $\lambda$, and $\Lambda$.
This completes the proof of the lemma.
\end{proof}

In the above lemma the efficient constant $C_1$ blows up when the $\lambda$ and $\Lambda$ become extreme. However, we note that in the extreme circumstances the related basis functions for IFEM are very ``close" to classical FE nodal basis functions thanks to its consistency  with standard FE basis functions (see \cite{LiLinLinRo:04} for more detail). The partially penalized IFEM solution then should be also ``close" to 
the classical finite element solution on fitted meshes. In the following lemma we prove the efficiency in a different approach that yields a bounded coefficient constant when the  $\lambda$ and $\Lambda$ are extreme.

\begin{lemma}\label{lem:efficiency2}
There exists a constant $C_2$ that is independent of the mesh size and the location of the interface such that
for each interface edge $F \in \cE^{int}$  the following efficiency bound holds:
	\beq \label{local-efficiency-a}
	\begin{split}
		&\|j_{n,F}\|_{0,F} + \|j_{t,F}\|_{0,F}  \\
		\le& C_2 \!\!\!\! \sum_{K \in \{K_{F,1}, K_{F,2}\}}
		 \!\!\!\!\left( \| \a^{1/2} (\nabla u- \nabla_h u_\cT) \|_{0,K} \!+\!
		 \| \tilde \a^{1/2} \nabla u_\cT\|_{0,S_K}
		\!+\!  H_{f,K} +h_F^{1/2}\min \left\{ \sqrt{h_{F}^-}, \sqrt{h_{F}^+} \right\}
		 \right).
	\end{split}
	\eeq
\end{lemma}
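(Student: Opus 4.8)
The plan is to treat the piecewise-constant jumps $j_{n,F}$ and $j_{t,F}$ on the full edge $F=F^+\cup F^-$ and, without loss of generality, to assume $h_F^-\le h_F^+$, so that $\min\{\sqrt{h_F^-},\sqrt{h_F^+}\}=\sqrt{h_F^-}$, the sub-edge $F^-$ is thin, and $h_F^+\ge h_F/2$. The source of the blow-up in Lemma~\ref{lem:efficiency1} is precisely the auxiliary bubble functions supported near $F^-$, whose gradients scale like $1/h_F^-$ and generate the factor $\sqrt{h_F/h_F^-}$ hidden in $C_1$. The whole idea here is to avoid localizing to $F^-$: I would control the dominant contribution through the large sub-edge $F^+$, where every inverse-length factor is harmless, and then pay only a higher-order geometric price for the thin part.

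First I would dispatch the $F^+$-contributions. Writing $\|j_{n,F}\|_{0,F}^2=\|j_{n,F}\|_{0,F^+}^2+\|j_{n,F}\|_{0,F^-}^2$ and likewise for $j_{t,F}$, the terms on $F^+$ follow directly from the already-established bounds \eqref{effi:flux-jump-3} and \eqref{effi:tan-jump-2}: since $h_F^+\ge h_F/2$, dividing by $\sqrt{h_F^+}$ costs only a universal constant, so $\|j_{n,F}\|_{0,F^+}$ and $\|j_{t,F}\|_{0,F^+}$ are bounded by the right-hand side of \eqref{local-efficiency-a} with a constant independent of the cut location.

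Next I would handle the thin part $F^-$ for the normal jump. Rather than using \eqref{effi:flux-jump-2}, I would test the residual/error relation with a \emph{full-edge} bubble $w_F=(j_{n,F}|_{F^+})\,b_F$, where $b_F$ is the standard quadratic edge bubble on $K_{F,1}\cup K_{F,2}$, so that $\|\nabla b_F\|_{\infty}\sim 1/h_F$. The divergence theorem together with $-\nabla\cdot(\a\nabla u)=f$ and the geometric inconsistency \eqref{eq: inconsistency} converts $\int_F j_{n,F}w_F\,ds$ into element-residual integrals plus a remainder supported on $S_K$, controlled by $\|\tilde\a^{1/2}\nabla u_\cT\|_{0,S_K}$ with $S_K$ as in \eqref{eq: SK}. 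The mismatch $\int_{F^-}(j_{n,F}-j_{n,F}|_{F^+})b_F\,ds$ is bounded by the internal flux jump across $\tilde\Gamma_K$ times $\|b_F\|_{0,F^-}\sim\sqrt{h_F^-}$, and the residual integral over the thin strip, estimated crudely, is what produces the higher-order term $h_F^{1/2}\sqrt{h_F^-}=h_F^{1/2}\min\{\sqrt{h_F^-},\sqrt{h_F^+}\}$ in \eqref{local-efficiency-a}. Assembling this with the previous step gives the bound for $\|j_{n,F}\|_{0,F}$.

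The tangential jump on $F^-$ is where I expect the real difficulty. Because $u_\cT$ is continuous at the two endpoints of $F$, one has $\int_F j_{t,F}\,ds=0$, hence $j_{t,F}|_{F^-}\,h_F^-=-\,j_{t,F}|_{F^+}\,h_F^+$ and the thin-side value is amplified by $h_F^+/h_F^-$. A uniform bound is therefore impossible from the geometry alone; it can survive only because $j_{t,F}|_{F^+}$ itself is correspondingly small in the degenerate regime. The key fact, alluded to in the remark preceding the lemma, is that as the cut degenerates the IFE nodal basis converges to the continuous piecewise-linear FE basis (see \cite{LiLinLinRo:04}), so tangential continuity of $u_\cT$ across $F$ is nearly recovered and $|j_{t,F}|_{F^+}|$ decays fast enough to compensate the amplification. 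Making this quantitative, namely bounding $(h_F^+/h_F^-)\,|j_{t,F}|_{F^+}|\,\sqrt{h_F^-}$ by the energy error plus a higher-order term using the IFE perturbation estimates together with \eqref{norm-0-1} and \eqref{norm-1/2-1}, is the technical heart and the main obstacle of the proof. Once $\|j_{t,F}\|_{0,F^-}$ is controlled in this way, squaring and summing the $F^+$ and $F^-$ contributions of both jumps over $K\in\{K_{F,1},K_{F,2}\}$ yields \eqref{local-efficiency-a}.
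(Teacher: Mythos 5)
Your treatment of the $F^+$ contributions is sound and essentially parallels the paper: since $h_F^+\ge h_F/2$ once you assume $h_F^-\le h_F^+$, the already-established bounds \eqref{effi:flux-jump-3} and \eqref{effi:tan-jump-2} (whose constants do not depend on the cut location) control $\|j_{n,F}\|_{0,F^+}$ and $\|j_{t,F}\|_{0,F^+}$; the paper does the same thing in \eqref{part2} by running the standard efficiency argument on regular sub-triangles $\hat K_{F,1},\hat K_{F,2}$ that have $F^+$ as a complete edge. The genuine gap is on $F^-$, and you flag it yourself: you declare that bounding $(h_F^+/h_F^-)\,|j_{t,F}|_{F^+}|\sqrt{h_F^-}$ is ``the technical heart and the main obstacle'' and then stop. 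Identifying the correct mechanism (the IFE basis degenerates to the standard FE basis as one sub-edge shrinks) is not the same as executing it, and without that quantitative step the lemma is not proved. The paper's execution is the auxiliary function $\hat u_\cT$: on each $K\in\{K_{F,1},K_{F,2}\}$ one replaces $u_\cT$ by the single linear polynomial agreeing with the majority piece of $u_\cT$, so that $\jump{\tilde\a^{1/2}D_n\hat u_\cT}$ (and its tangential analogue) is \emph{constant on all of} $F$ and its $F$-norm is controlled by its $F^+$-norm, reducing to the step you already have; the perturbation is then quantified by \eqref{part1}, $\|\nabla(u_\cT-\hat u_\cT)|_K\|_{0,F}\lesssim (\sqrt{h_F^-}/\rho_{K^-})\,|(u_\cT-\hat u_\cT)(z_F^-)|\lesssim\sqrt{h_F^-}$, using the consistency of IFE and FE shape functions for degenerate cuts from \cite{2003LiLinWu,LiLinLinRo:04}. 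That one perturbation bound produces the slack term $h_F^{1/2}\sqrt{h_F^-}$ and settles the normal and tangential jumps on $F^-$ simultaneously --- exactly the ingredient your sketch defers.

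A second, related problem is that your full-edge bubble argument for $\|j_{n,F}\|_{0,F^-}$ does not close as stated. The mismatch $j_{n,F}|_{F^-}-j_{n,F}|_{F^+}$ is a sum over the two elements of the differences $(\tilde\a\nabla u_\cT\cdot\bn_F)|_{K^-}-(\tilde\a\nabla u_\cT\cdot\bn_F)|_{K^+}$, and the IFE construction enforces flux continuity across $\tilde\Gamma_K$ only in the direction $\bn_{\tilde\Gamma_K}$, not in the direction $\bn_F$; this difference is therefore generically an $O(1)$ multiple of the local gradient of $u_\cT$, not a small quantity. Multiplying it by $\|b_F\|_{0,F^-}\sim\sqrt{h_F^-}$ does not by itself yield something dominated by the energy error plus $h_F^{1/2}\sqrt{h_F^-}$; making it so again requires relating the two linear pieces of $u_\cT$ in the degenerate regime, i.e., the very same IFE-to-FE perturbation estimate you left unproved. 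So both $F^-$ bounds in your plan ultimately rest on the one missing ingredient, which is precisely what the paper's $\hat u_\cT$ construction supplies.
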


\begin{proof}
Without loss of generality assume that $h_F^+ \ge h_F^-$. Let $K \in \{K_{F,1}, K_{F,2} \}$. Define 
 $\hat K \subset K$ be the triangle such that (i) $F^+$ is a complete edge of $\hat K$ and (ii) the vertex in $K$ that is opposite to $F$, denote by $z_{F,K}$, is also a vertex of $\hat K$. It is obvious that $\hat K$ is regular.
Also define $\hat u_K$ be a piecewise linear function such that $\hat u_K \equiv u_\cT$ on  $K^+$ (or $\hat K$), if $\hat K \subset K^+$ (or $K^+ \subset \hat K$) and that $\hat u_K|_F$ is linear.  It is obvious that $\hat u_K$ uniquely exists.
Finally define $\hat u_{\cT}$ such that $\hat u_{\cT}|_K = \hat u_K$, $K \in \{K_{F,1}, K_{F,2}\}$.
By the triangle inequality, we have
\begin{equation} \label{part0}
\begin{split}
	\| \jump{\tilde \a^{1/2} D_n u_\cT}\|_{0,F} &\le
	\| \jump{\tilde\a^{1/2} D_n \hat u_\cT}\|_{0,F}  + \| \jump{\tilde \a^{1/2} D_n (u_\cT - \hat u_\cT)}\|_{0,F}\\
	& \le 
	\| \jump{\tilde\a^{1/2} D_n \hat u_\cT}\|_{0,F}  +
	\sum_{K \in \{K_{F,1}, K_{F,2}\}}\|\tilde \a^{1/2} \nabla (u_\cT - \hat u_\cT)|_K\|_{0,F}.
\end{split}
\end{equation}
Then by the definition of $\hat u_\cT$ and a direct computation, we have for each $K \in \{K_{F,1}, K_{F,2}\}$:
\begin{equation}
\begin{split}
	&\|\tilde \a^{1/2} \nabla (u_\cT - \hat u_\cT)|_K\|_{0,F} 
	\lesssim  \dfrac{\sqrt{h_{F}^-}}{ \rho_{K^-}} |(u_\cT - \hat u_\cT)(z_F^-)|,
\end{split}
\end{equation}
where $\rho_{K^-}$ denotes that radius of the ball inscribed in $K^-$. This, combining 
with the fact that $|(u_\cT - \hat u_\cT)(z_F^-)| \lesssim \rho_{K^-}$ (See \cite{2003LiLinWu,LiLinLinRo:04} for the consistence of FE and IFE functions when one piece of the interface element is small.) yields
\begin{equation}\label{part1}
\begin{split}
	&\|\a^{1/2} \nabla (u_\cT - \hat u_\cT)|_K\|_{0,F} 
	\lesssim \sqrt{h_{F}^-}.	
\end{split}
\end{equation}
 Now applying the fact that $\jump{D_n \hat u_\cT}|_F$ is a constant and the standard efficiency result on $\hat K_{F,1}$ and $\hat K_{F,2}$ gives
\begin{equation}\label{part2}
\begin{split}
	&h_F^{1/2}\| \jump{\tilde\a^{1/2} D_n \hat u_\cT}\|_{0,F}  \lesssim
	h_F^{1/2}\| \jump{\tilde\a^{1/2} D_n  u_\cT}\|_{0,F^+} \\
	\lesssim &
	\| \a^{1/2} \nabla u - \tilde \a^{1/2} \nabla u_\cT)\|_{\hat K_{F,1} \cup \hat K_{F,2}}
	+ h_F \|f\|_{{\hat K_{F,1} \cup \hat K_{F,2}}}\\
	\lesssim & \sum_{K \in \{K_{F,1}, K_{F,2}\}}\| \a^{1/2} \nabla (u - u_\cT)\|_{K} + \| \tilde \a^{1/2} \nabla u_\cT\|_{0,S_K}
	+  H_{f,K}.
\end{split}
\end{equation}
Combing \eqref{part0}--\eqref{part2} we have
\begin{equation}\label{part3}
\begin{split}
	&h_F^{1/2}\| \jump{\tilde\a^{1/2} D_n u_\cT}\|_{0,F} \\
	\le & C_2 \sum_{K \in \{K_{F,1}, K_{F,2}\}} \left( \| \a^{1/2} \nabla (u - u_\cT)\|_{ K} + h_F^{1/2}\sqrt{h_{F}^-} +H_{f,K} 
	+\| \tilde \a^{1/2} \nabla u_\cT\|_{0,S_K}\right)
\end{split}
\end{equation}
where the constant $C_2$  is independent of the mesh size and the location of the interface.

Similarly we can prove that
\beq\label{part4}
	h_F^{1/2} \|\tilde \a_F^{1/2}j_{t,F}\|_{0,F} \le C_2\sum_{K \in \{ K_{F,1} , K_{F,2} \}}  
	\left( \| \a^{1/2} \nabla (u-u_\cT)\|_{0,K} + h_F^{1/2}\sqrt{h_{F}^-} \right), 
\eeq
where the constant $C_2$  is also independent of the mesh size and the location of the interface.
Finally \eqref{local-efficiency-a} is a direct result of  \eqref{part3} and \eqref{part4}.
\end{proof}

\begin{remark}
\eqref{part3} indicates that for the case when $\lambda$ and $\Lambda$ are extreme the term $h_F^{1/2}\sqrt{h_{F}^-}$ becomes negligible, and, therefore, $C_2$ can be used as an effective efficiency constant.
\end{remark}

\begin{theorem}
Let $u$ and $u_\cT$ be the solution in \eqref{continuous-weak-solution} and \eqref{IFE approx}, respectively. 
The following efficiency bound holds for any $K \in \cT^{int}$:
	\begin{equation}\label{part5}
	\begin{split}
	&\eta_K \le \min\{ C_1, C_2\} 
	\left( \| \a^{1/2} \nabla (u - u_\cT)\|_{0,\omega_K} 
	+\delta_K\right).
	\end{split}
	\end{equation}
\end{theorem}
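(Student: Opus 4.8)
The plan is to read \eqref{part5} as the consolidation of the two efficiency estimates already established in Lemmas~\ref{lem:efficiency1} and~\ref{lem:efficiency2}, using the definition \eqref{eta-K} to isolate the individual contributions to $\eta_K$. First I would split $\eta_K^2$ for an interface element $K\in\cT^{int}$ into four groups: the interface-edge jumps $\frac{h_F}{2}(\|\tilde\a_F^{-1/2}j_{n,F}\|_{0,F}^2+\|\tilde\a_F^{1/2}j_{t,F}\|_{0,F}^2)$ over $F\in\cE_K\cap\Ei$; the geometric term $\|\tilde\a^{1/2}\nabla u_\cT\|_{0,S_K}^2$; the interior non-interface flux jumps; and the Neumann flux jumps. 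The last three groups are the easy part: the geometric term is already of a form that belongs in $\delta_K$, while the non-interface and Neumann edge terms are controlled by $\|\a^{1/2}\nabla(u-u_\cT)\|_{0,\omega_K}$ plus data oscillation through the classical bubble-function argument of \cite{BeVe:00}, which remains valid there because those jumps are genuine polynomials rather than merely piecewise constant across the interface.

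The heart of the matter is the interface-edge group, and here I would exploit the fact that \emph{both} lemmas furnish a bound for it. From Lemma~\ref{lem:efficiency1} the full indicator obeys $\eta_K\le C_1(\|\a^{1/2}\nabla(u-u_\cT)\|_{0,\omega_K}+\sum_{K'\subset\omega_K}\|\tilde\a^{1/2}\nabla u_\cT\|_{0,S_{K'}}+\sum_{K'\subset\omega_K}H_{f,K'})$ with $C_1=C_1(\lambda,\Lambda,\rho)$; from the weighted per-edge estimates \eqref{part3}--\eqref{part4} obtained inside the proof of Lemma~\ref{lem:efficiency2}, summing $\sqrt{h_F/2}\,\|\tilde\a_F^{-1/2}j_{n,F}\|_{0,F}$ and $\sqrt{h_F/2}\,\|\tilde\a_F^{1/2}j_{t,F}\|_{0,F}$ over $F\in\cE_K\cap\Ei$ produces a structurally identical bound, but now with the location-independent constant $C_2$ and one extra term $\sum_{F\in\cE_K\cap\Ei}h_F^{1/2}\min\{\sqrt{h_F^-},\sqrt{h_F^+}\}$. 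Because both inequalities bound the same quantity $\eta_K$, I may retain whichever constant is smaller, which is exactly what yields the factor $\min\{C_1,C_2\}$ in \eqref{part5}.

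Finally I would collect all remaining lower-order contributions into $\delta_K$, namely the data oscillation $\sum_{K'\subset\omega_K}H_{f,K'}$, the geometric fitting error $\sum_{K'\subset\omega_K}\|\tilde\a^{1/2}\nabla u_\cT\|_{0,S_{K'}}$, and the interface-width term $\sum_{F\in\cE_K\cap\Ei}h_F^{1/2}\min\{\sqrt{h_F^-},\sqrt{h_F^+}\}$ inherited from Lemma~\ref{lem:efficiency2}; summing the squared edge and element pieces and taking a square root then delivers \eqref{part5}. I do not expect a genuine analytic obstacle, since the two preceding lemmas carry all the technical weight; the only point requiring care is the bookkeeping — confirming that the higher-order terms from both lemmas are absorbed consistently into a single $\delta_K$, and justifying that retaining the smaller of the two constants is legitimate, which it is precisely because each bound holds on its own and the extra term $h_F^{1/2}\min\{\sqrt{h_F^-},\sqrt{h_F^+}\}$ becomes negligible exactly in the regime where $C_1$ degenerates.
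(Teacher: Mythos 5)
Your proposal is correct and follows exactly the route the paper intends: the theorem is stated without a separate proof precisely because it is the immediate consolidation of Lemma~\ref{lem:efficiency1} and the per-edge estimates \eqref{part3}--\eqref{part4} from Lemma~\ref{lem:efficiency2}, with the smaller constant retained and the extra interface-width term $h_F^{1/2}\sqrt{h_F^-}$ activated only when $C_2$ is the minimum (the role of the characteristic function $\chi(C_2\le C_1)$ in $\delta_K$). Your bookkeeping of the non-interface and Neumann edge contributions via the classical argument, and your observation that each bound holds independently so the minimum may be taken, match the paper's (implicit) reasoning.
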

where 
\[
\delta_K = \sum_{K \subset \o_K} \| \tilde \a^{1/2} \nabla u_\cT\|_{0,S_K}
	+ \chi(C_2\le C_1) h_F^{1/2}\sqrt{h_{F}^-} +\sum_{K \subset \o_K} H_{f,K},
\]
$\chi$ is the characteristic function and
$C_1$ and $C_2$ are the efficiency constants in  Lemma \ref{lem:efficiency1} and Lemma \ref{lem:efficiency2}, respectively.

\section{Numerical Results}\label{sec:6}
In this section, we report some numerical results to demonstrate the performance of the residual-based error estimator for partially penalized IFEM. 

For the first three examples, we consider a diffusion interface problem with a smooth elliptical interface curve which has been reported in 
\cite{LinLinZhang:15, LinYangZhang:15:2}. Let $\Omega=[-1,1]^2$, and the interface $\Gamma$ is an ellipse centered at $(x_0,y_0) = (0,0)$ with horizontal semi-axis $a= \frac{\pi}{6.28}$ and the vertical semi-axis $b = \frac{3}{2}a$. 
The interface separates $\Omega$ into two sub-domains, denoted by $\Omega^-$ and $\Omega^+$ such that
\begin{equation*}
    \Omega^- = \{(x,y): r(x,y) < 1\} ~~~\text{and}~~~
    \Omega^+ = \{(x,y): r(x,y) > 1\},
\end{equation*}
where
\begin{equation*}
  r(x,y) = \sqrt{\frac{(x-x_0)^2}{a^2} + \frac{(y-y_0)^2}{b^2}}.
\end{equation*}
The exact solution to this interface problem is 
\begin{equation}\label{eq: true solution ellipse}
    u(x,y) =
    \left\{
      \begin{array}{ll}
        \frac{1}{\beta^-}r^{p}, & \text{if~} (x,y) \in\Omega^-, \\
        \frac{1}{\beta^+}r^{p} + \frac{1}{\beta^-} - \frac{1}{\beta^+}, &\text{if~} (x,y)\in\Omega^+.
      \end{array}
    \right.
\end{equation}
Here $\beta^\pm>0$ are the diffusion coefficients, and $p>0$ is the regularity parameter. In the following, we  use $\rho=\frac{\beta^+}{\beta^-}$ to denote
 the ratio of the coefficient jump.

Our adaptive mesh refinement  
follows the standard procedure:
\begin{equation*}
\textbf{Solve} \longrightarrow ~ \textbf{Estimate} \longrightarrow~ \textbf{Mark} \longrightarrow~ \textbf{Refine}.
\end{equation*}
We solve the interface problem using IFEM in \eqref{IFE approx},  then we compute the residual-based error indicator $\eta_K$ on each element by \eqref{eta-K}. We adopt the equilibration marking strategy, i.e., construct a minimal subset $\hat{\mathcal{T}}$ of $\mathcal{T}$ such that 
\begin{equation}
	\sum_{K\in\hat{\mathcal{T}}} \eta_K^2 \geq \theta^2\eta^2
\end{equation}
where the threshold $\theta=0.5$. Finally we refine the marked triangles by newest vertex bisection \cite{Maubach:95}.
The initial mesh is formed by first partitioning the domain into a $4\times 4$ congruent rectangles, then cutting each rectangle into two congruent triangles by connecting its diagonal with positive slope.

\subsection*{Example 6.1 (Piecewise smooth solution with moderate jump)}
\label{ex1} 

In this example, we let $\rho = 100$ and $p=5$ which represents a moderate coefficient contrast and piecewise smooth solution.
In Figure \ref{fig: mesh small}, we list, from left to right, some typical meshes of similar number of elements and degrees of freedom (DOF) generated by the uniform IFEM, the adaptive IFEM, and the adaptive FEM on unfitted mesh\cite{ChXiZh:09}.  We observe that there is not much local mesh refinement around the interface for the adaptive IFEM in the middle of Figure \ref{fig: mesh small}, comparing with the uniform mesh in the left plot. This indicates that the errors of the partially penalized IFEM on uniform mesh are almost equally distributed, and IFEM itself can resolve the interface accurately for moderate coefficient jump. Whereas using the finite element method on unfitted meshes requires much more local mesh refinement around the interface (see the plot on the right of Figure \ref{fig: mesh small}) to resolve the non-smooth feature of interface problems. 

\begin{figure}[ht!]
\begin{center}
\includegraphics[width=0.32\textwidth]{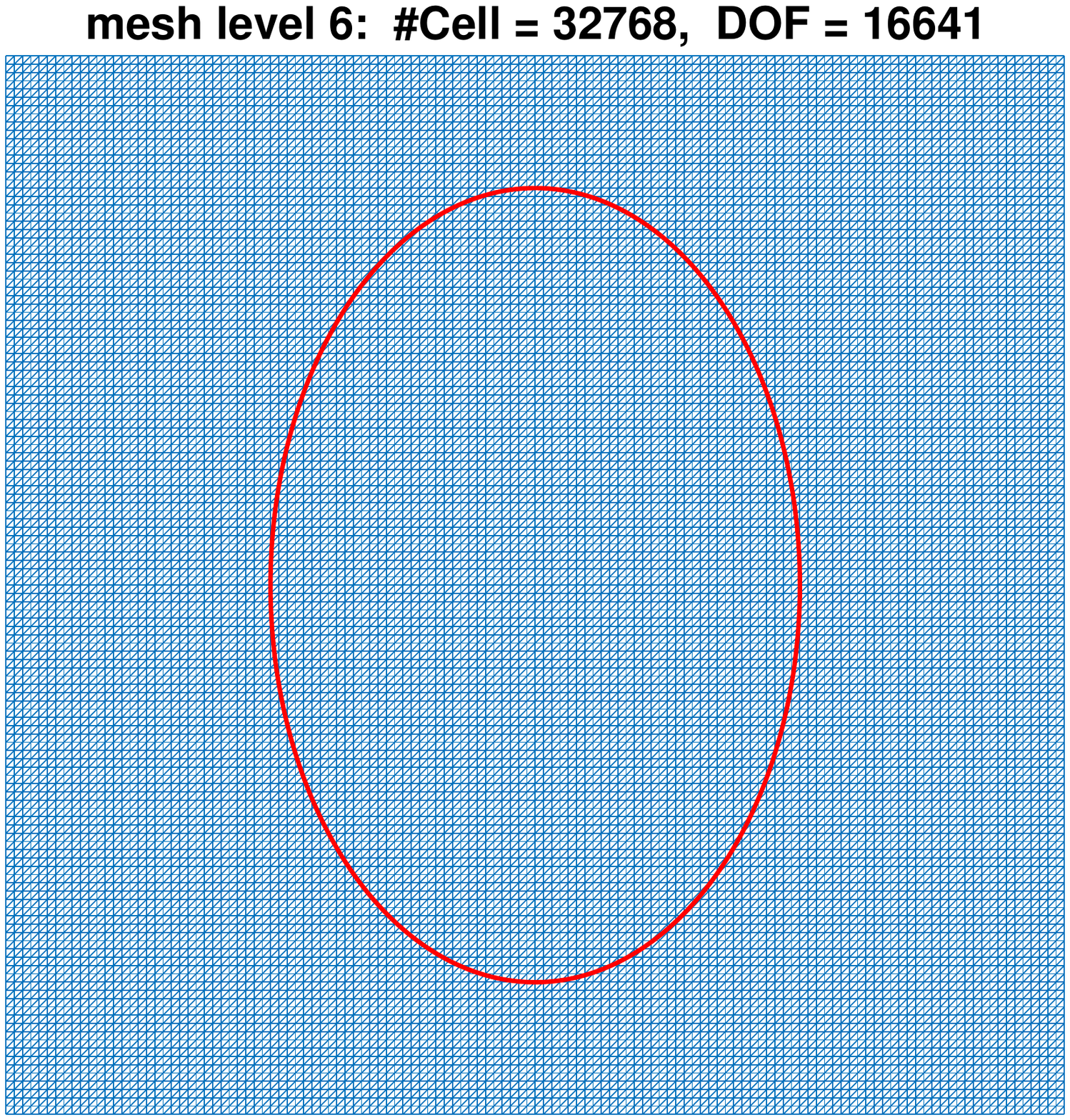}
\includegraphics[width=0.32\textwidth]{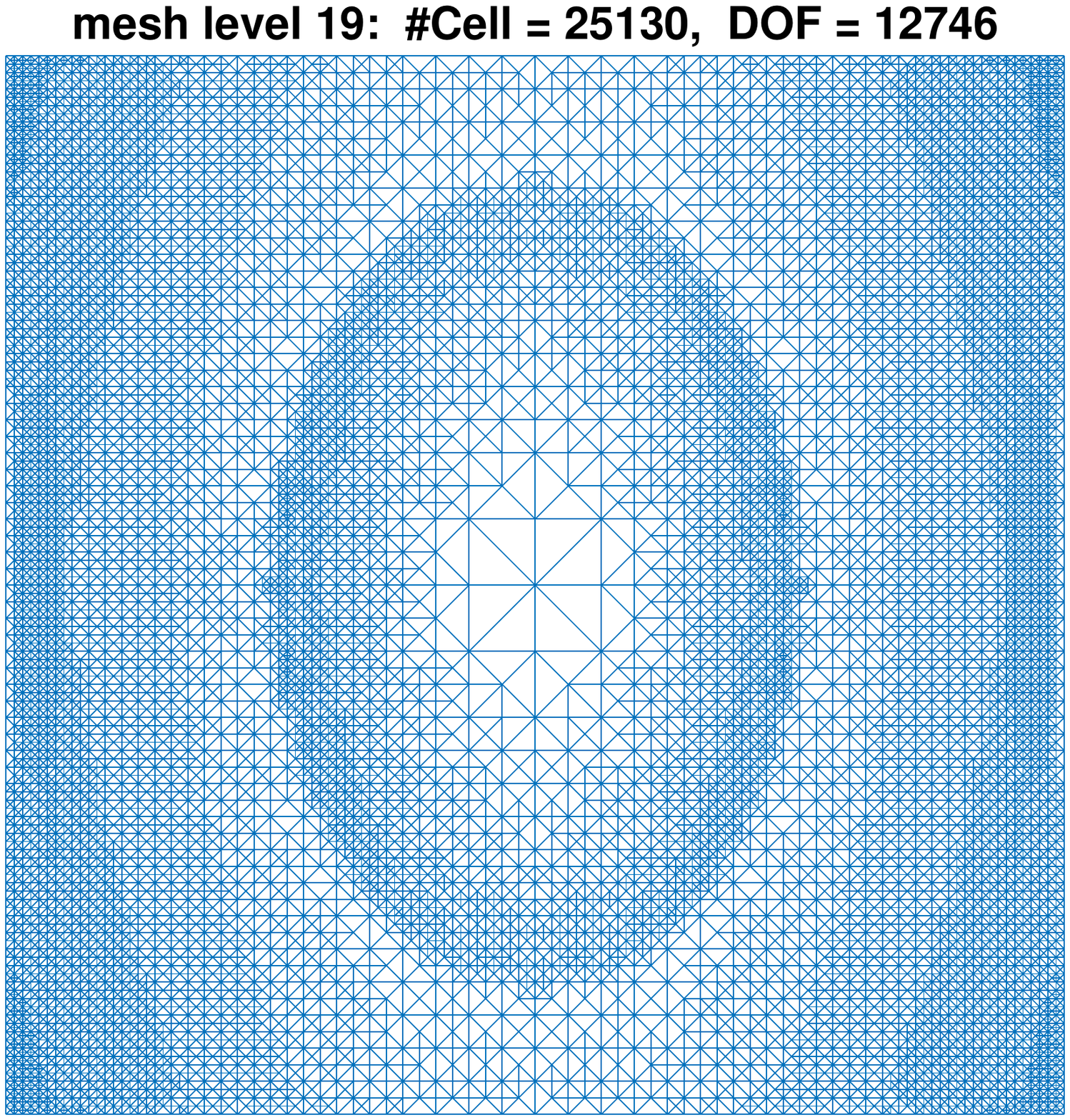}
\includegraphics[width=0.32\textwidth]{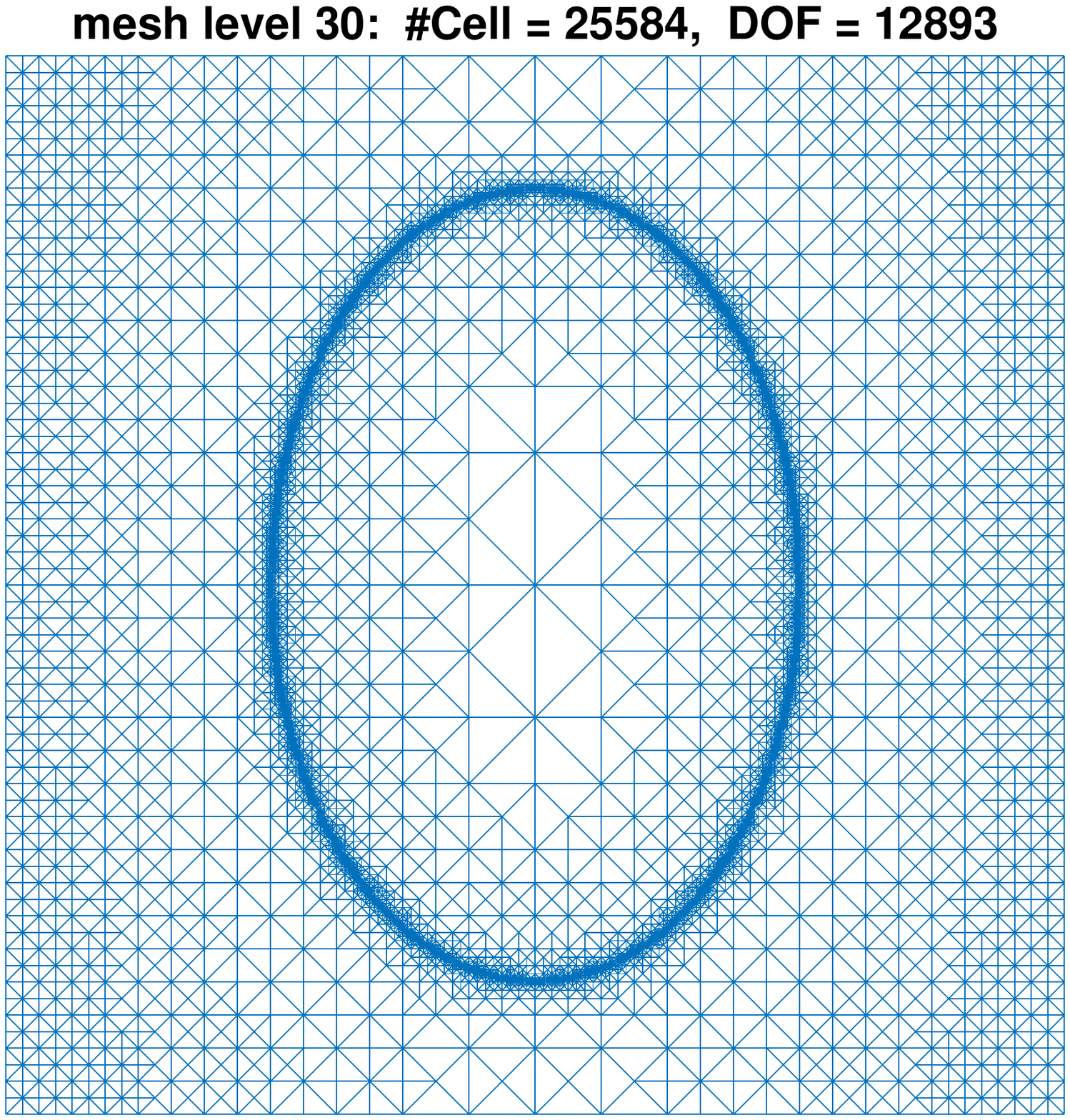}
\end{center}
\caption{Uniform mesh for IFEM (left), adaptive mesh for IFEM (center), adaptive mesh for FEM {\em\cite{ChXiZh:09}} (right) for Example 6.1}
\label{fig: mesh small}
\end{figure}

\begin{figure}[ht!]
\begin{center}
\includegraphics[width=0.6\textwidth]{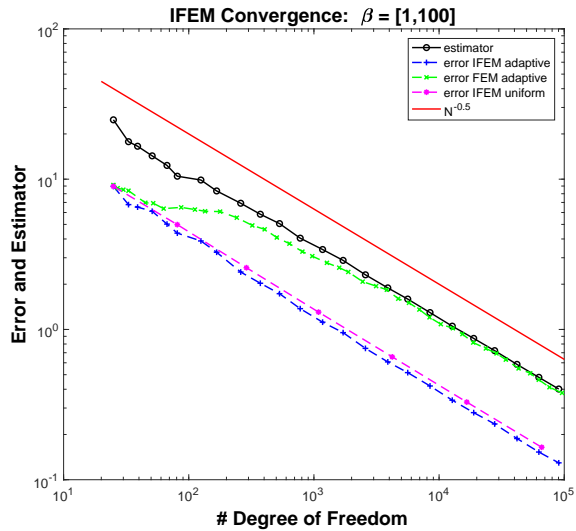}
\end{center}
\caption{Convergence of uniform IFEM, adaptive IFEM, and adaptive FEM for  Example 6.1}
\label{fig: cvg small}
\end{figure}

In Figure \ref{fig: cvg small}, we report the convergence of these three methods 
and the residual-based error estimator for the adaptive IFEM.  
The slopes of the log(DOF)-log$(\| \alpha^{1/2}(\nabla u-\nabla_h u_\cT)\|_{0,\Omega} )$
and the log(DOF)-log($\eta$) for the adaptive IFEM are both very close to $-1/2$, 
which indicates the optimal-order decay of errors with respect to the number of unknowns and, hence, the efficiency of our local error indicators. We use the following efficiency index,
\[
	\mbox{eff-index} = \dfrac{{\eta}}{\| \alpha^{1/2}(\nabla u-\nabla_h u_\cT)\|_{0,\Omega}}
\]
to test the efficiency of our residual-based error estimator. The eff-index is very stable at every mesh level and the value is around 3. We note that the errors of uniform IFEM are very close to the errors of adaptive IFEM. This again indicates that the IFEM itself can resolve the interface accurately for moderate coefficient jumps and piecewise smooth solutions. However, using the standard FEM, the magnitudes of errors are much larger than those of IFEM, with similar degrees of freedom.

\subsection*{Example 6.2 (Piecewise smooth solution with large  jump)}
\label{ex2}
In this example, we test the large jump case by choosing $\rho=10^6$.  In this case, the true solution possesses a very steep gradient at the interface.

The left plot of Figure \ref{fig: IFEM large} shows a typical mesh for the adaptive IFEM, which, compared with Figure \ref{fig: mesh small}, has much denser refinement around the interface. In the right plot of Figure \ref{fig: IFEM large}, we observe the optimal-rate decay of the errors and the estimators.
Nevertheless, even if the convergence rate is optimal for the uniform IFEM, the magnitudes of errors are significantly larger than the errors of adaptive IFEM.
Hence, applying adaptive mesh refinement is computationally more efficient for interface problems with large coefficient jump even for IFEM. 
The efficiency indices are between 2.5 - 3.5 on all meshes except the first few coarse ones, and they become more stable (close to 3) as the computations reach the asymptotical convergence region. This phenomenon indicates the robustness of the error estimation with respect to the ratio of coefficient jump. 

Furthermore, the numerical solutions and the error surfaces on the uniform and adaptive meshes with similar DOFs are depicted in Figure \ref{fig: sol large} and Figure \ref{fig: error large}, respectively. We can observe that the error is significantly diminished for the adaptive solution.

\begin{figure}[ht!]
\begin{center}
\includegraphics[width=0.48\textwidth]{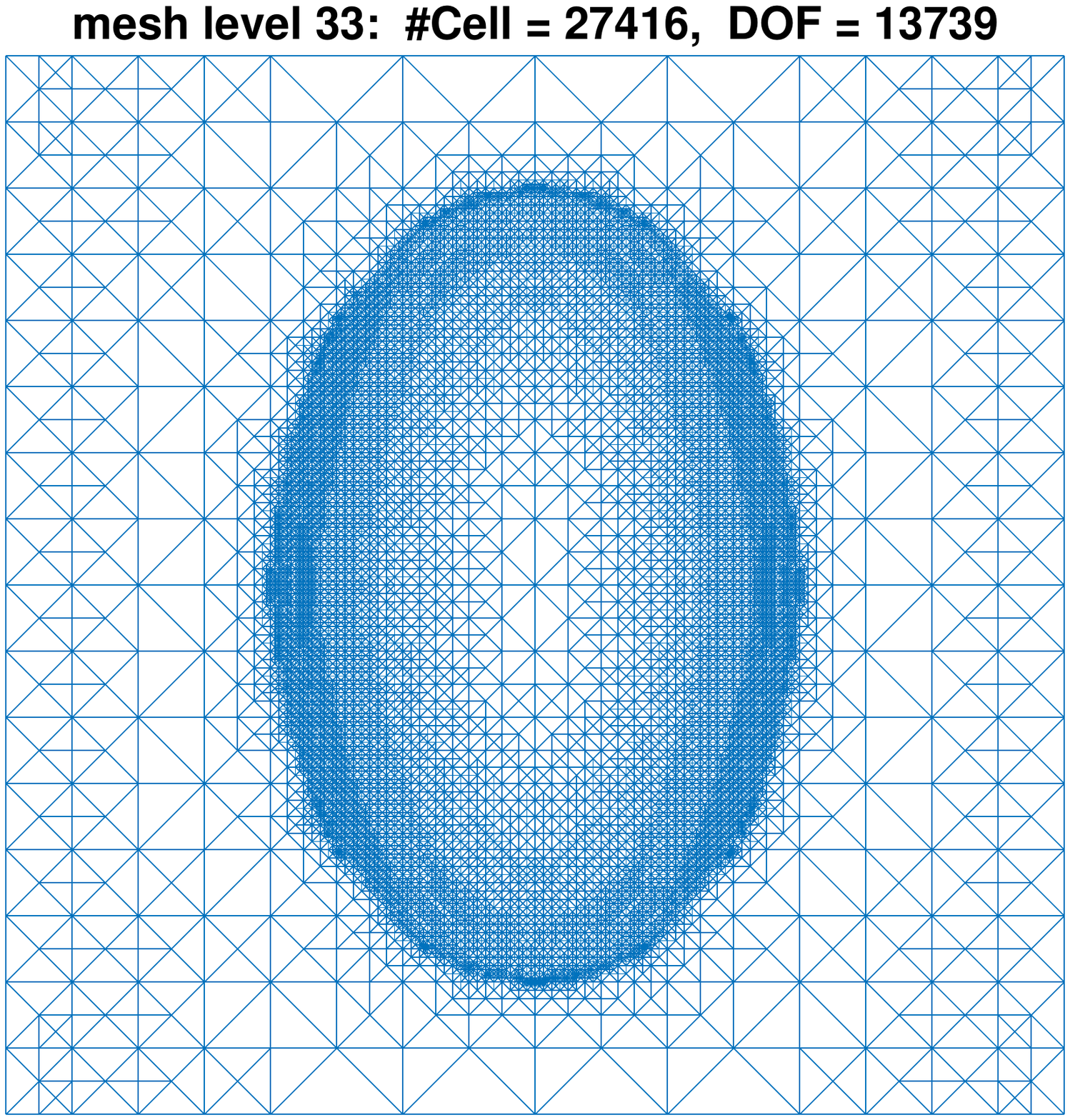}
\includegraphics[width=0.5\textwidth]{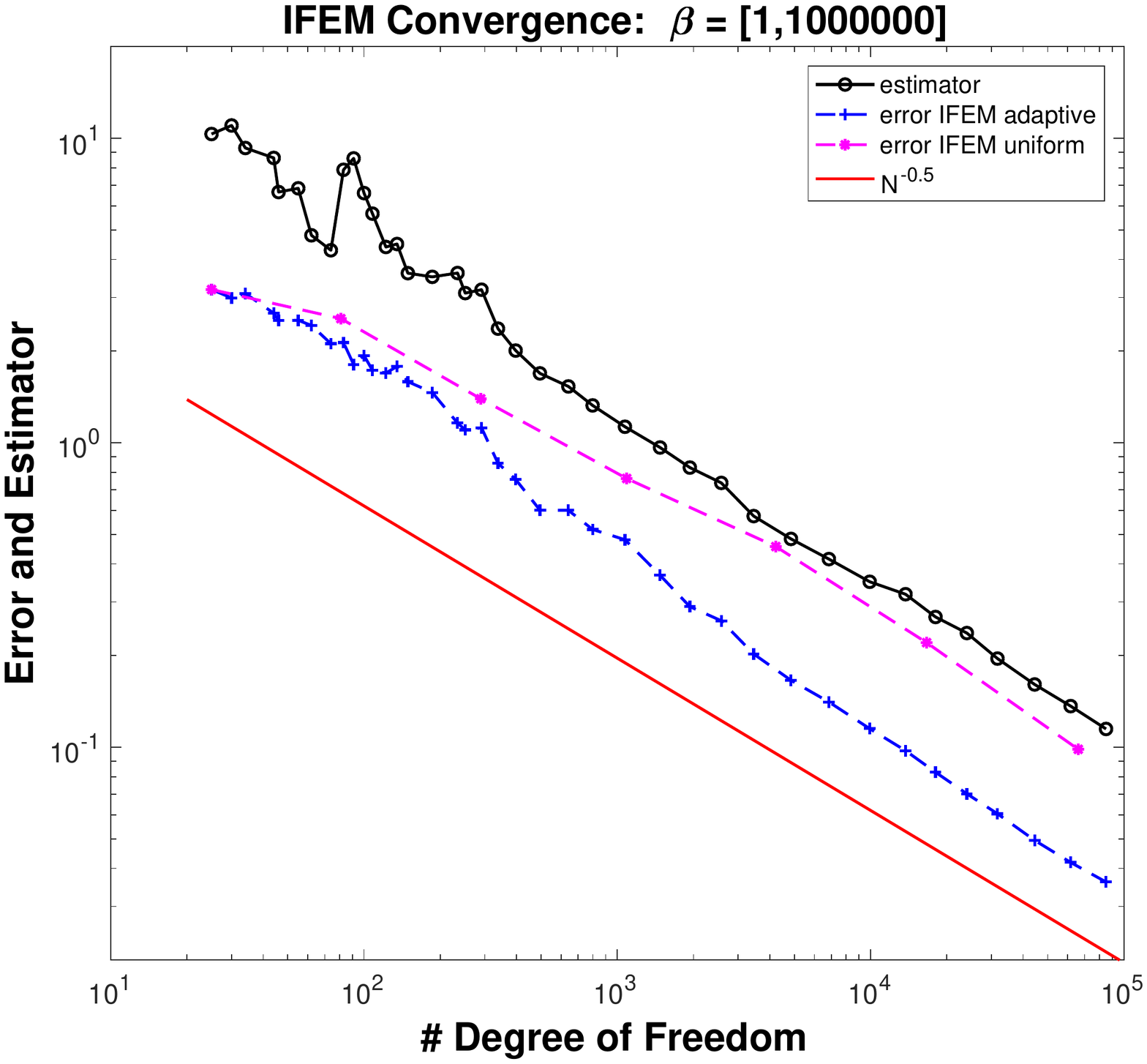}
\end{center}
\caption{Mesh generated by the adaptive IFEM (left) and the convergence of 
adaptive and uniform IFEM (right) for Example 6.2}
\label{fig: IFEM large}
\end{figure}

\begin{figure}[ht!]
\begin{center}
\includegraphics[width=0.5\textwidth]{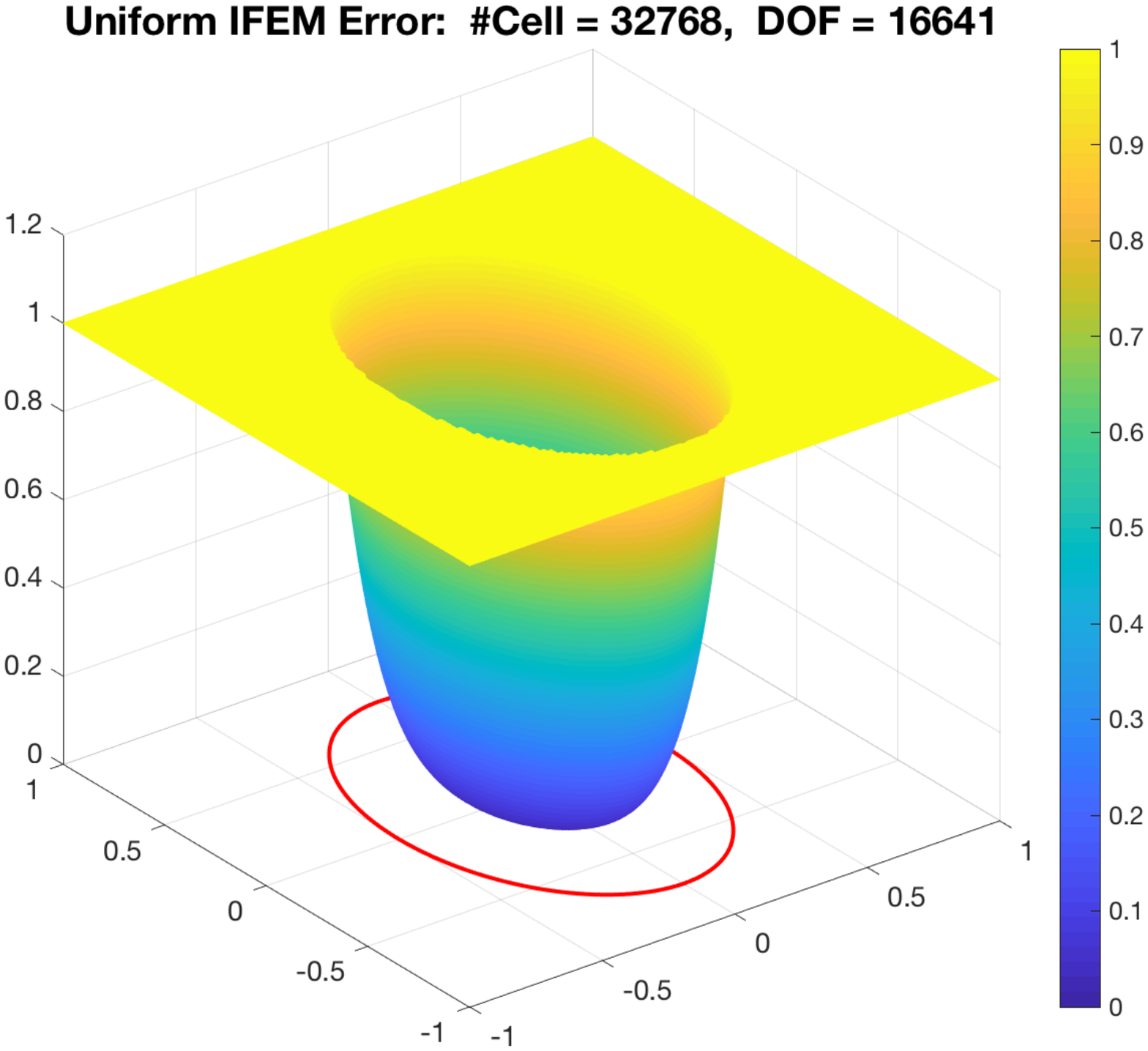}~
\includegraphics[width=0.5\textwidth]{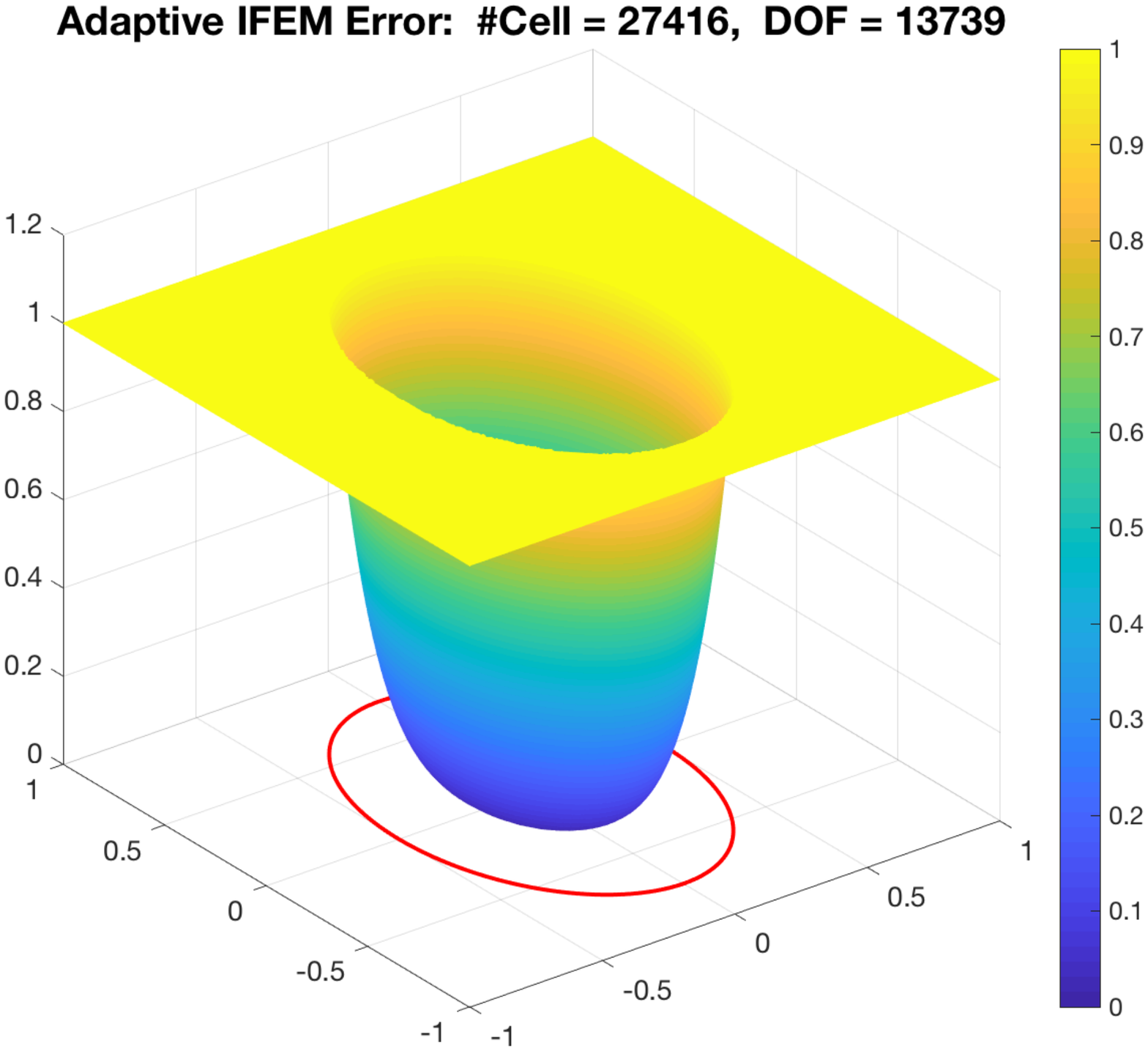}
\end{center}
\caption{Numerical solutions of uniform (left) and adaptive (right) IFEM with similar DOFs for Example 6.2}
\label{fig: sol large}
\end{figure}

\begin{figure}[ht!]
\begin{center}
\includegraphics[width=0.5\textwidth]{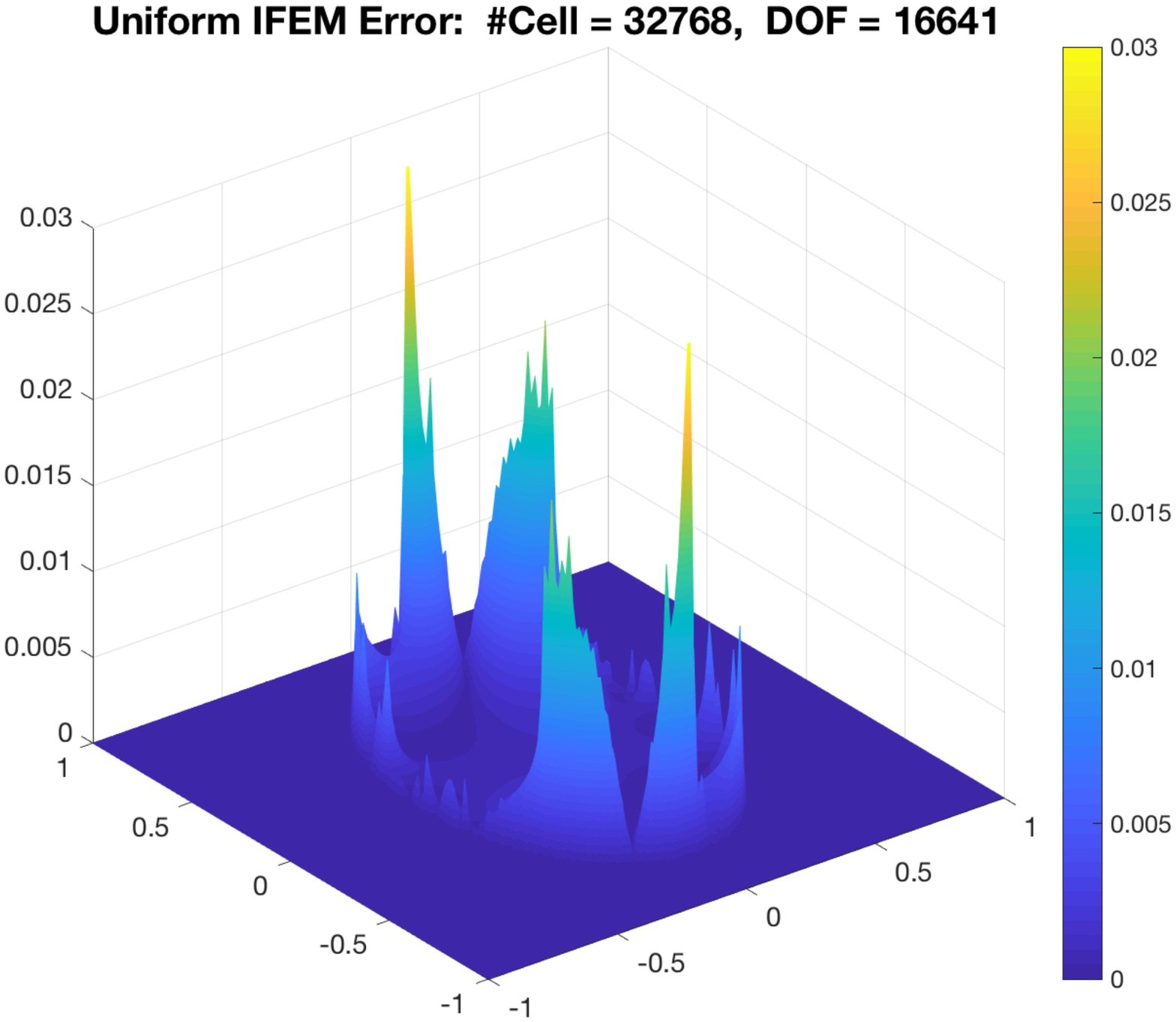}~
\includegraphics[width=0.5\textwidth]{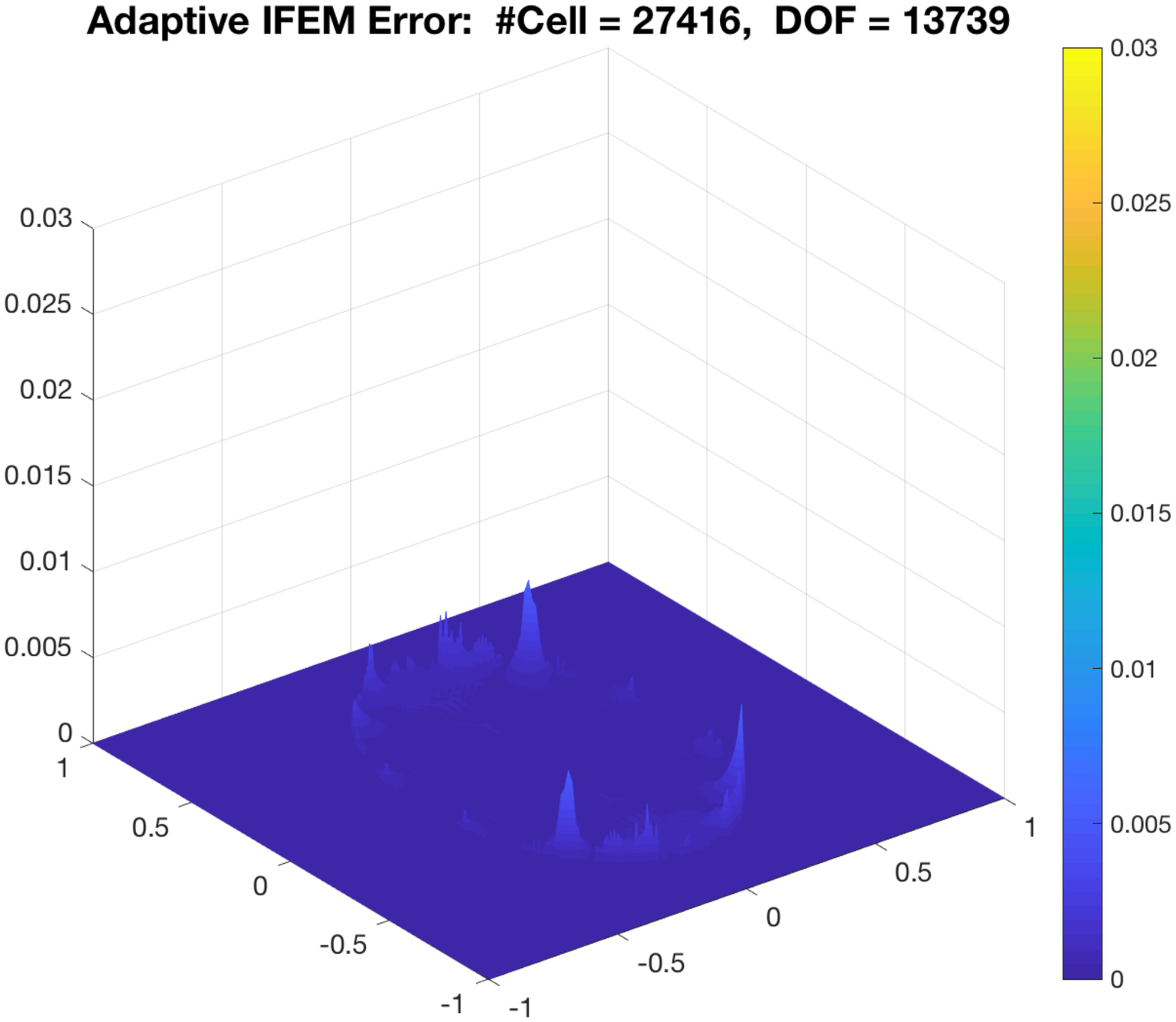}
\end{center}
\caption{Point-wise errors of uniform (left) and adaptive (right) IFEM with similar DOFs  for Example 6.2}
\label{fig: error large}
\end{figure}

\subsection*{Example 6.3 (Solution with singularity and large coefficient jump)}
\label{ex3}

In this example, we consider the interface problem with large coefficient jump and a solution singularity. We choose $\rho=10^6$ and $p=0.5$ in the exact solution Figure \ref{eq: true solution ellipse}. Note that the solution is merely in $H^{1.5-\epsilon}(\Omega)$ for any $\epsilon>0$ and the solution becomes singular at the origin. 
 
The left plot of Figure \ref{fig: singularity} shows a typical mesh of the adaptive IFEM, and
we observe that the mesh is densely refined around the interface as well as the point of singularity. The right plot of Figure \ref{fig: singularity} shows the optimal-rate decay of the errors and the estimators of our adaptive IFEM.  
Again, the averaging effectivity index is close to 3, which is similar to that in previous examples. This indicates the uniform effectivity of the error estimate with respect to the type of elements, i.e., interface and non-interface elements. The comparison of error in the right plot of  Figure \ref{fig: singularity} shows a stronger superiority of the adaptive mesh refinement than the uniform mesh refinement. In fact, the convergence of IFEM with uniform mesh refinement is not optimal, due to the singularity of the solution. 

The numerical solutions and error surfaces for the adaptive IFEM and uniform IFEM are depicted in Figure \ref{fig: sol singularity} and Figure \ref{fig: error singularity}, respectively. It is easy to see that the numerical solution on uniform mesh cannot resolve the behavior of exact solution accurately at the singularity point, and the error of the uniform solution possesses a very high peak around the singular point, while we 
can barely observe this phenomenon from the adaptive solution.  

\begin{figure}[ht!]
\begin{center}
\includegraphics[width=0.48\textwidth]{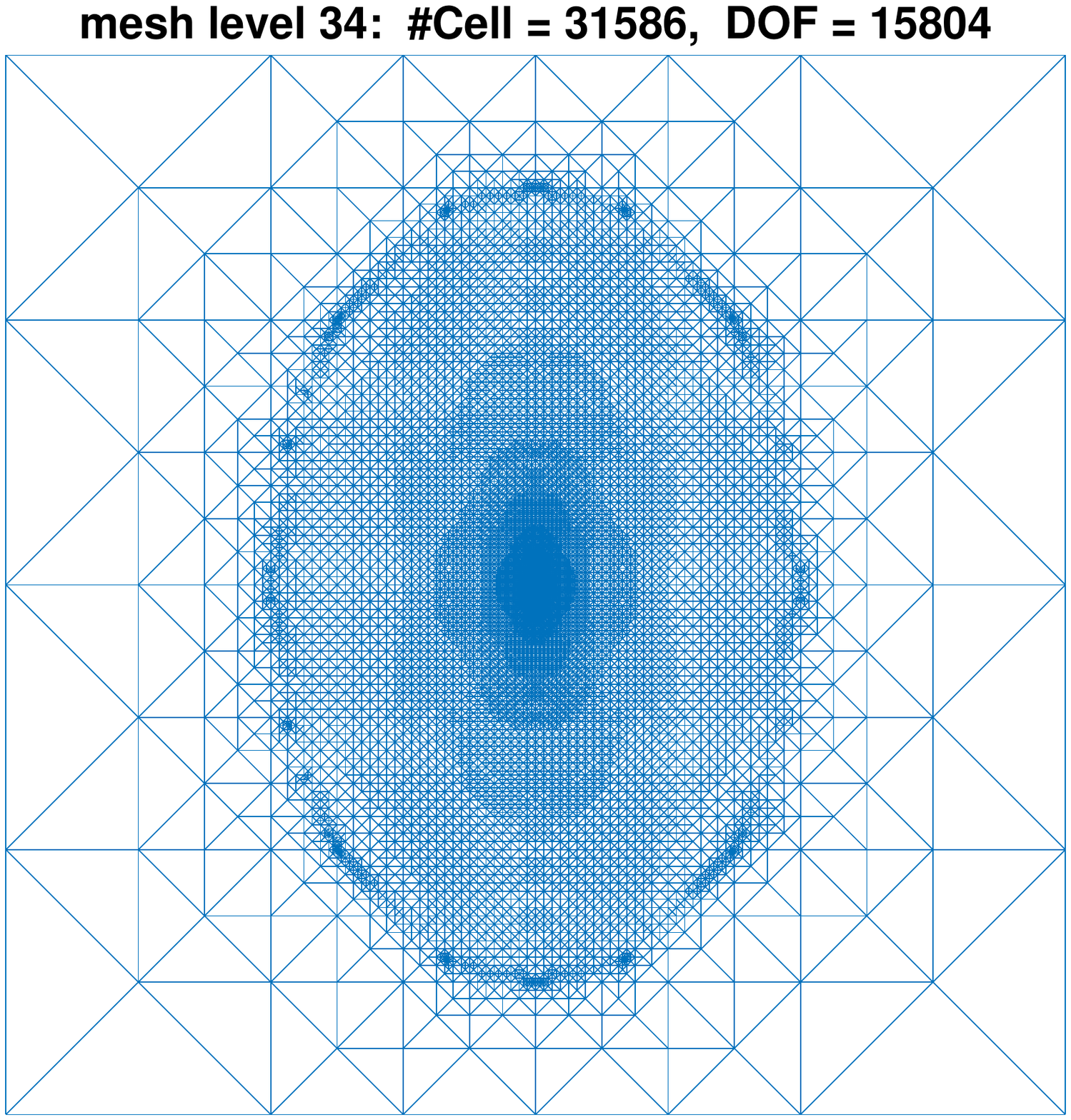}
\includegraphics[width=0.5\textwidth]{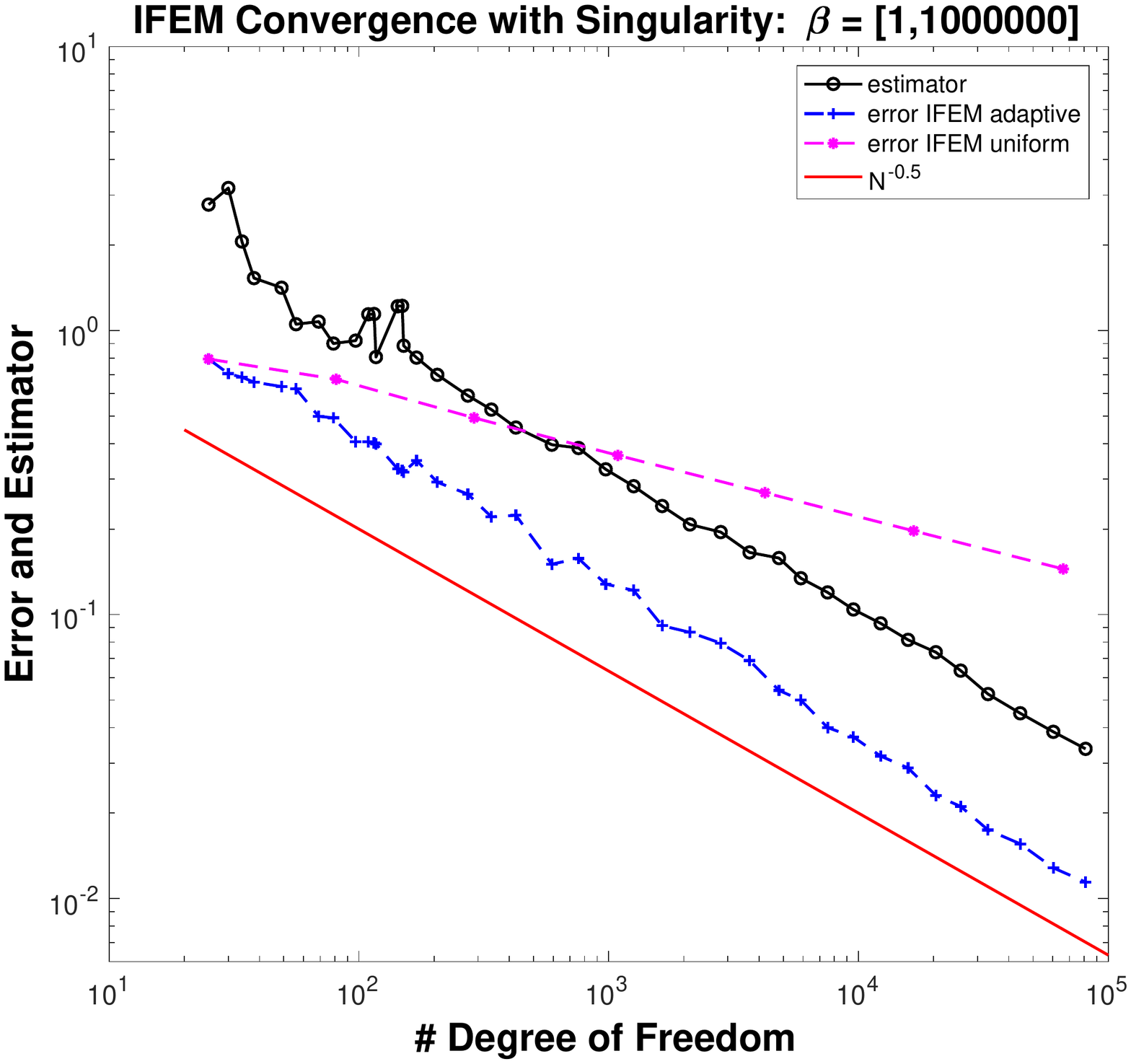}
\end{center}
\caption{Mesh generated by the adaptive IFEM (left) and the convergence of 
adaptive and uniform IFEM (right) for Example 6.3}
\label{fig: singularity}
\end{figure}

\begin{figure}[ht!]
\begin{center}
\includegraphics[width=0.49\textwidth]{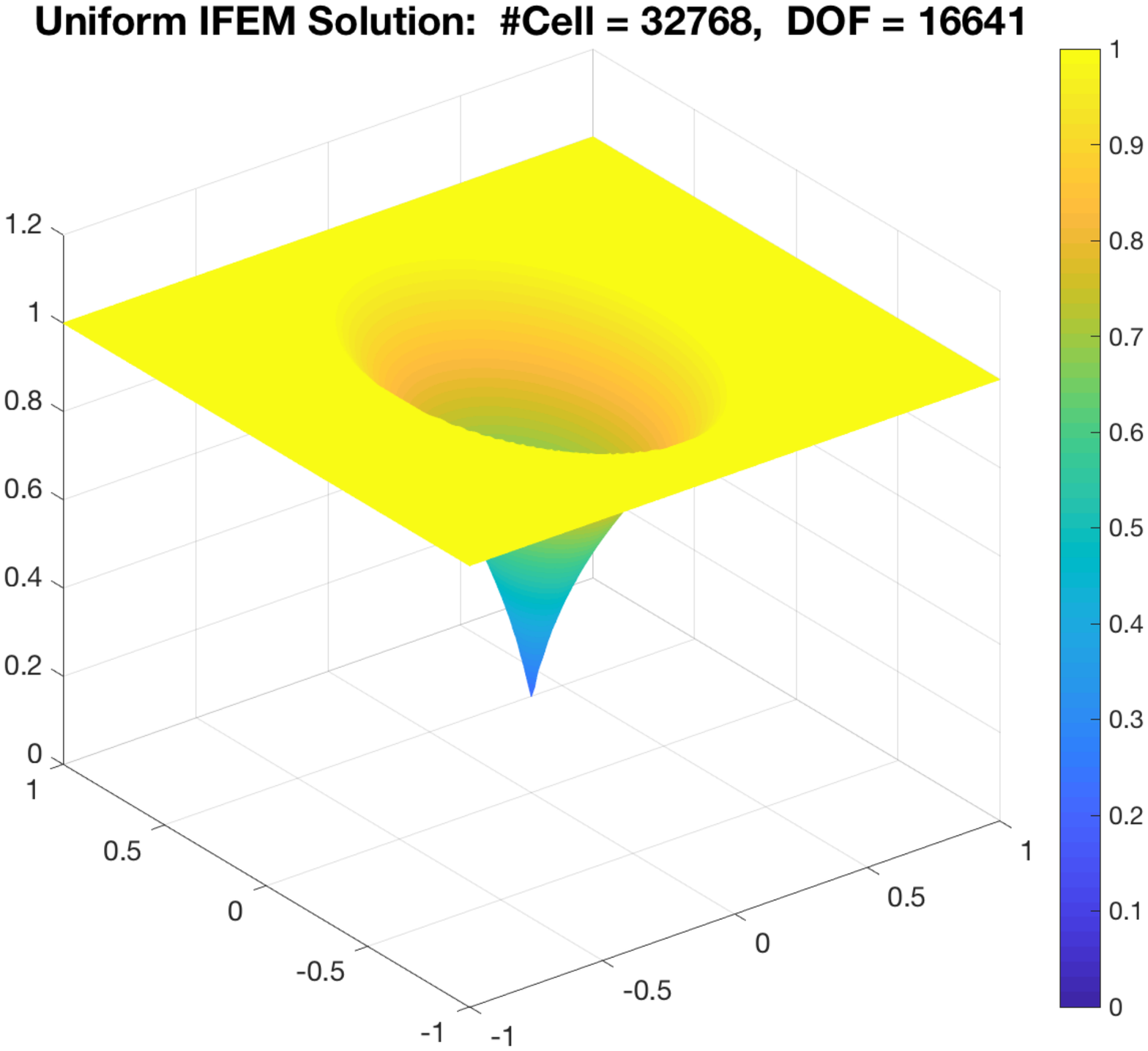}
\includegraphics[width=0.49\textwidth]{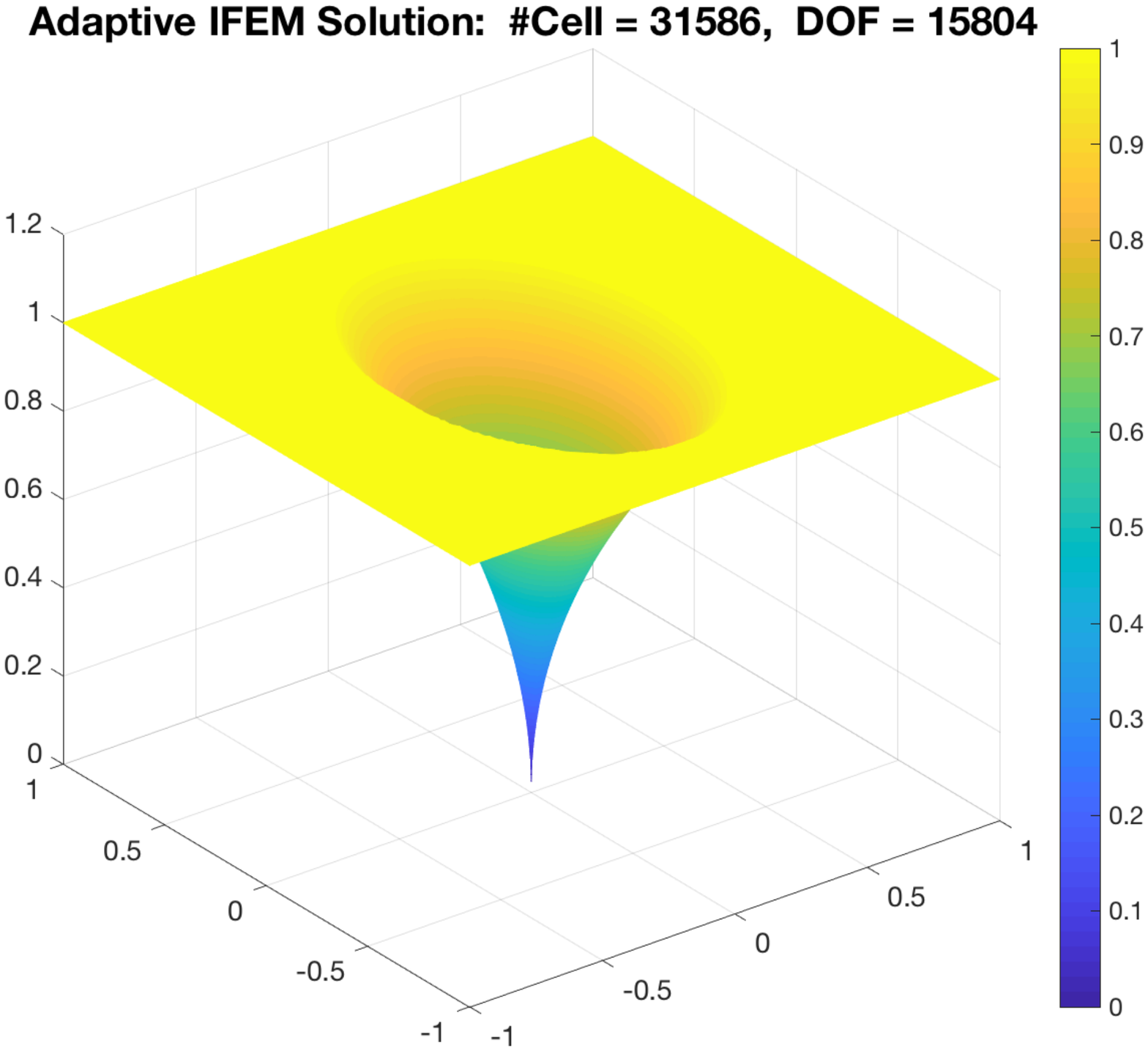}
\end{center}
\caption{Numerical solutions of uniform (left) and adaptive (right) IFEM with similar DOFs for Example 6.3}
\label{fig: sol singularity}
\end{figure}

\begin{figure}[ht!]
\begin{center}
\includegraphics[width=0.49\textwidth]{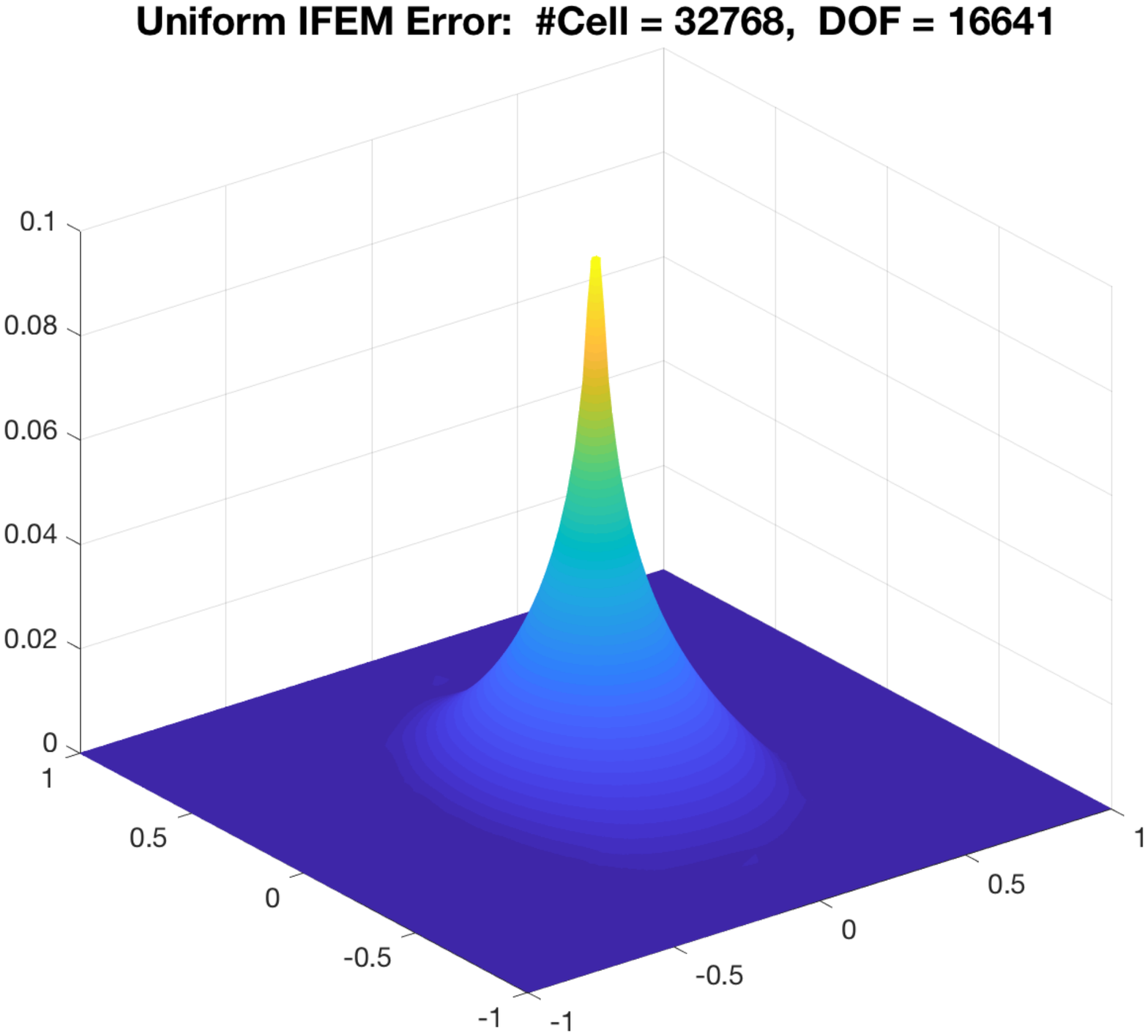}
\includegraphics[width=0.49\textwidth]{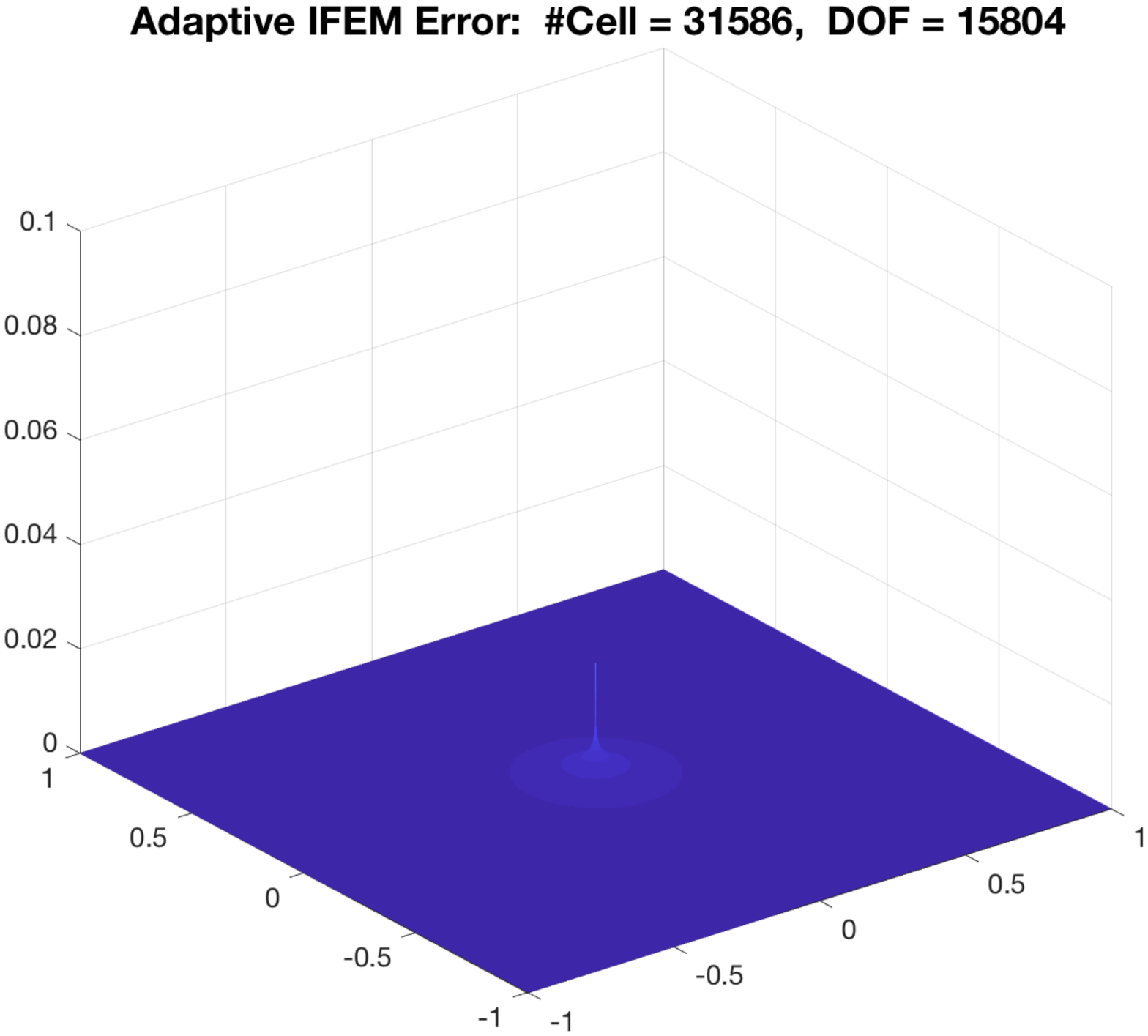}
\end{center}
\caption{Point-wise errors of  uniform (left) and adaptive (right)  IFEM with similar DOFs for Example 6.3}
\label{fig: error singularity}
\end{figure}

\subsection*{Example 6.4 (Solution with complicated interface shape)}
\label{ex4}

In this example, we consider an interface problem with a  
more complicated interfacial shape. The exact solution has a petal-shaped interface and it is defined through the following level set function:
\begin{equation}\label{eq: true solution 2}
    u(x,y) =
    \left\{
      \begin{array}{ll}
        \frac{1}{\beta^-}\phi(x,y), & \text{if~} \phi(x,y) < 0, \\
        \frac{1}{\beta^+}\phi(x,y), &\text{if~} \phi(x,y) \geq 0,
      \end{array}
    \right.
    \quad
\mbox{in } \O = [-1\,,1]^2
\end{equation}
where 
\[
\phi(x,y) = (x^2+y^2)^2\left(1+0.5\sin\left(12\tan^{-1}\left(\frac{y}{x}\right)\right)\right)-0.3 .
\] 
Due to the complexity of interface shape, we start the AMR procedure with a finer initial mesh, a $16\times 16$ Cartesian triangular mesh. A typical mesh is depicted on the left plot of Figure \ref{fig: petal}.
Comparing with Example 6.1, in which the interface is an ellipse (see Figure \ref{fig: mesh small}), the refinement around interface is denser. This is because the larger curvature of the interface causes the larger value of the inconsistency term in the error indicator in \eqref{eta-K}. 

The convergence plot depicted in the right plot of Figure \ref{fig: petal} indicates the optimal-rate decay for both the errors and estimators. The errors of adaptive solution are a little smaller than the errors of the uniform solution with similar degrees of freedom, although the latter also converge in optimal rate. Moreover, the efficient index for this example is close to $3$ which is similar to those in the previous examples. Numerical solutions and error surfaces of uniform and adaptive IFEMs are reported in  Figure \ref{fig: solution petal} and Figure \ref{fig: err petal}, respectively. We can again see that the errors of adaptive IFEM are smaller than the errors of uniform IFEM given similar degrees of freedom. 

\begin{figure}[t]
\begin{center}
\includegraphics[width=0.48\textwidth]{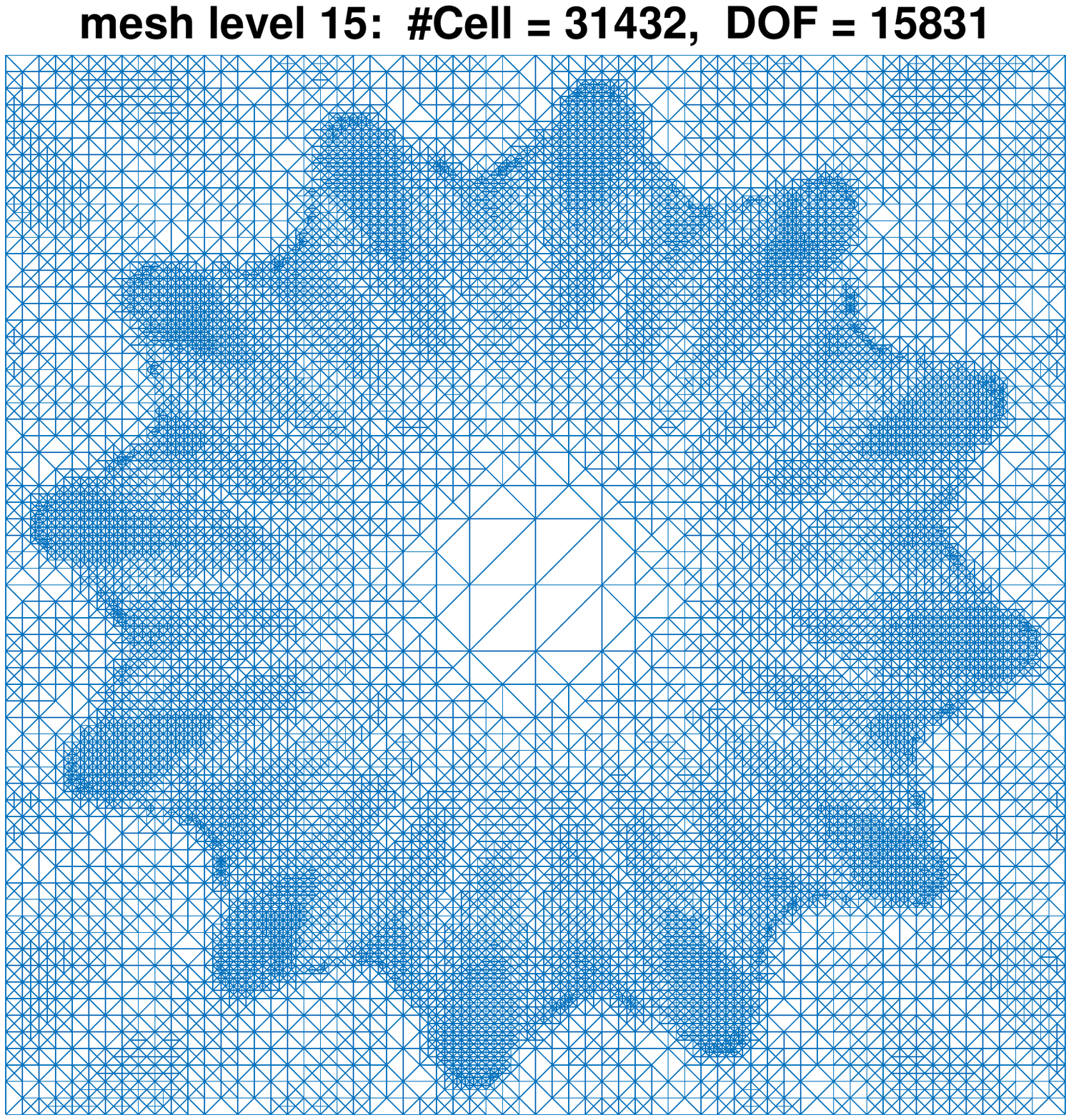}
\includegraphics[width=0.5\textwidth]{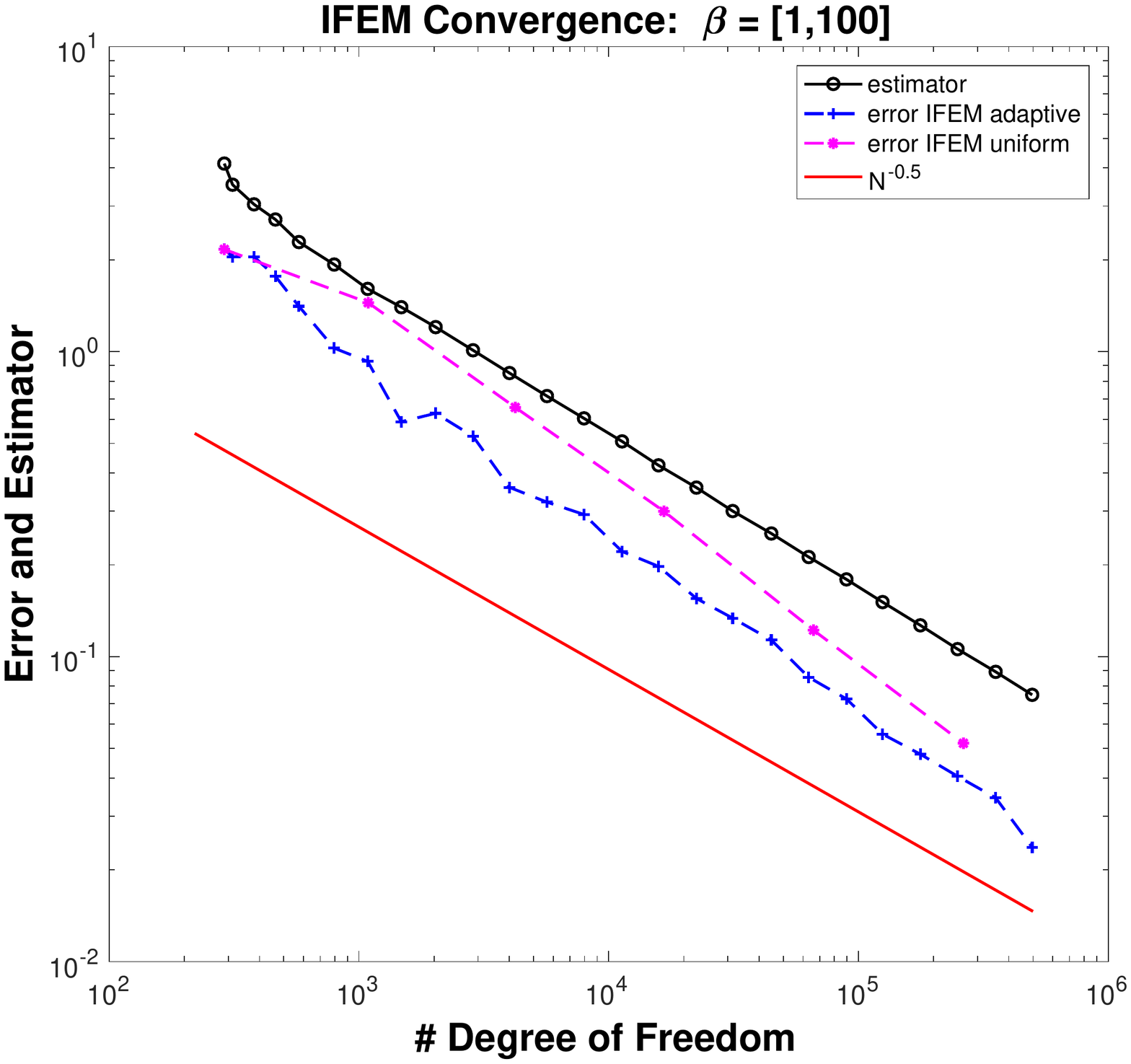}
\end{center}
\caption{Mesh generated by the adaptive IFEM (left) and the convergence of 
adaptive and uniform IFEM for Example 6.4}
\label{fig: petal}
\end{figure}

\begin{figure}[t]
\begin{center}
\includegraphics[width=0.48\textwidth]{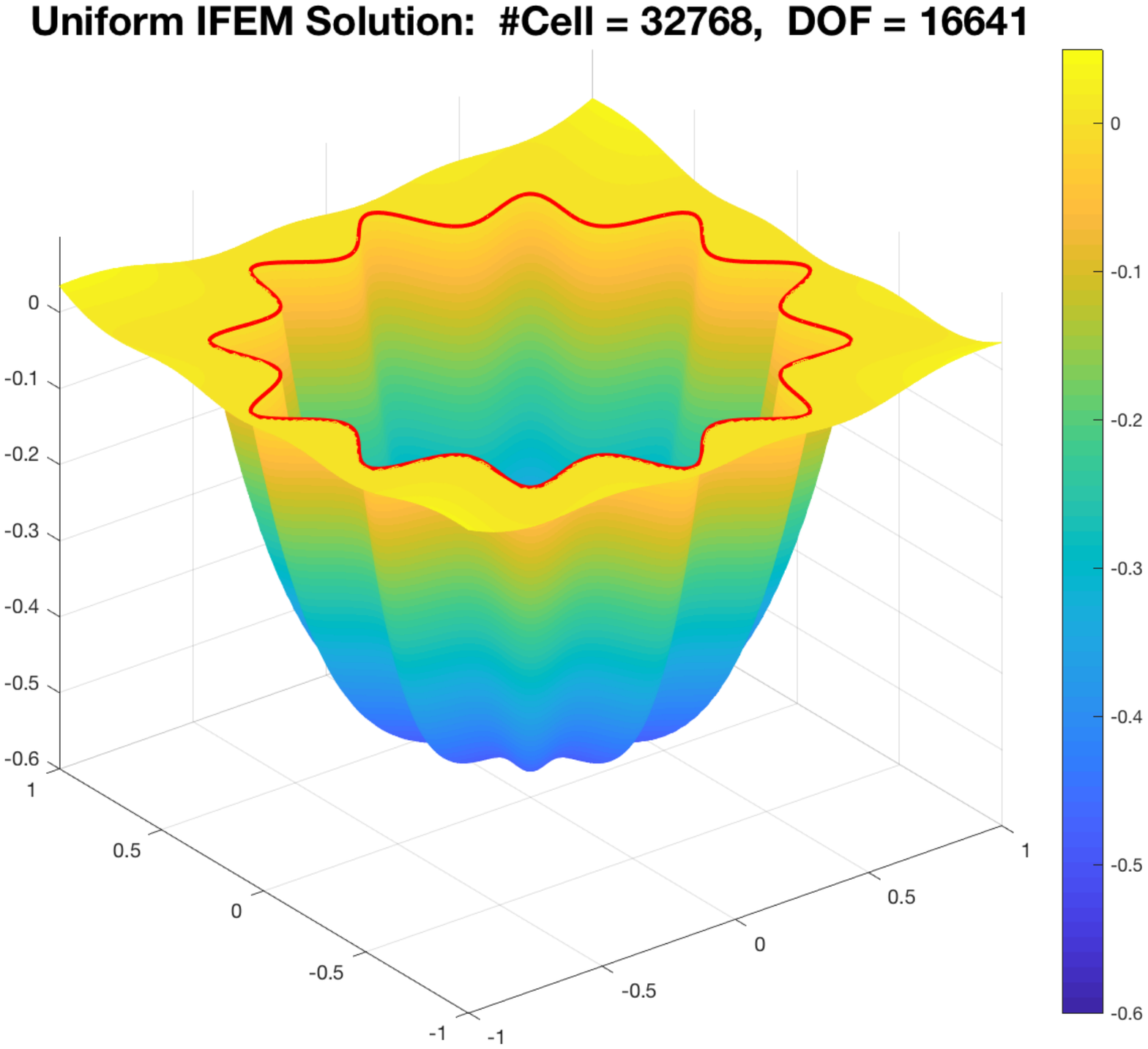}~
\includegraphics[width=0.48\textwidth]{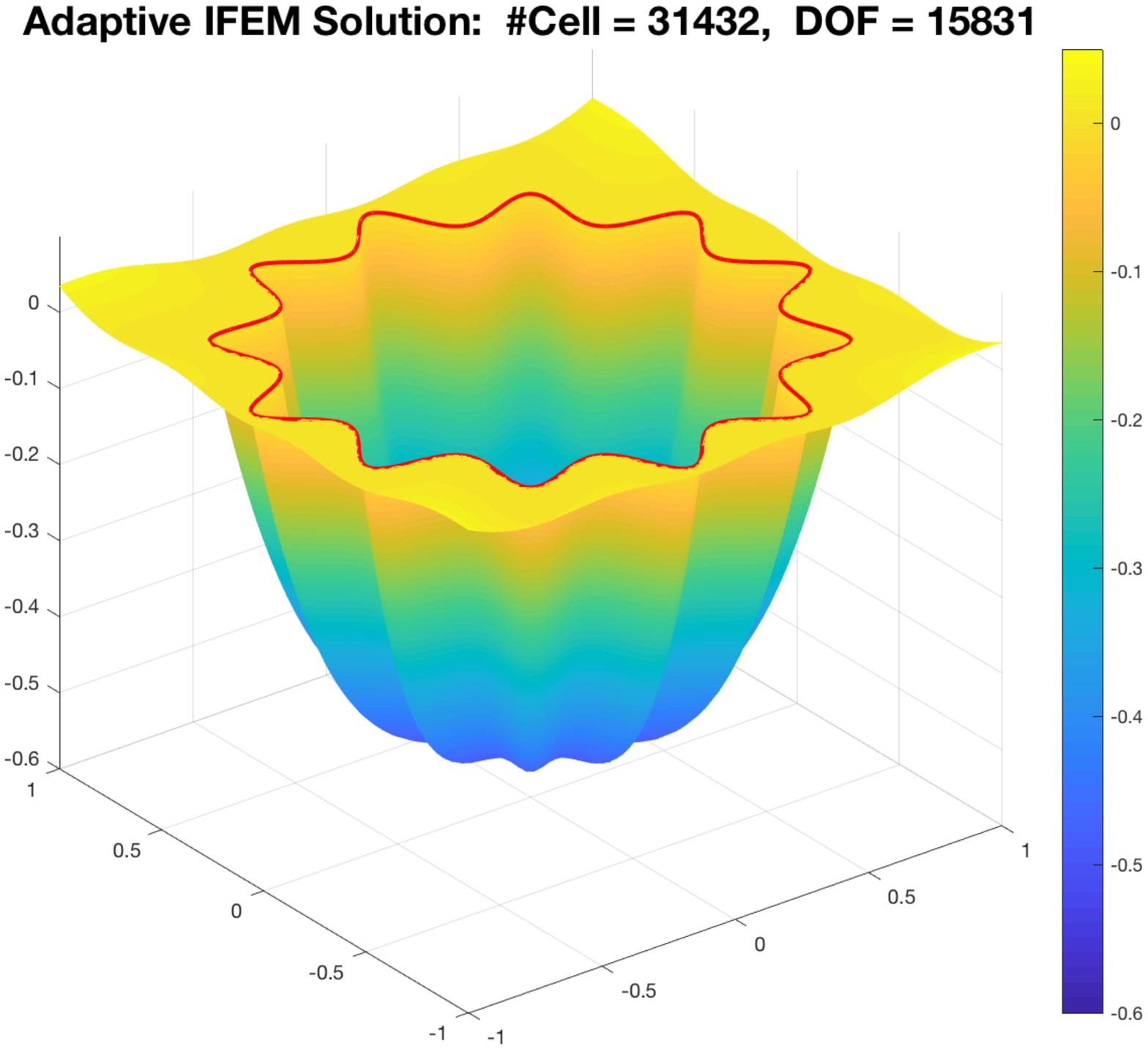}
\end{center}
\caption{Numerical solutions of uniform (left) and adaptive (right) IFEM with similar DOFs for Example 6.4}
\label{fig: solution petal}
\end{figure}

\begin{figure}[t]
\begin{center}
\includegraphics[width=0.48\textwidth]{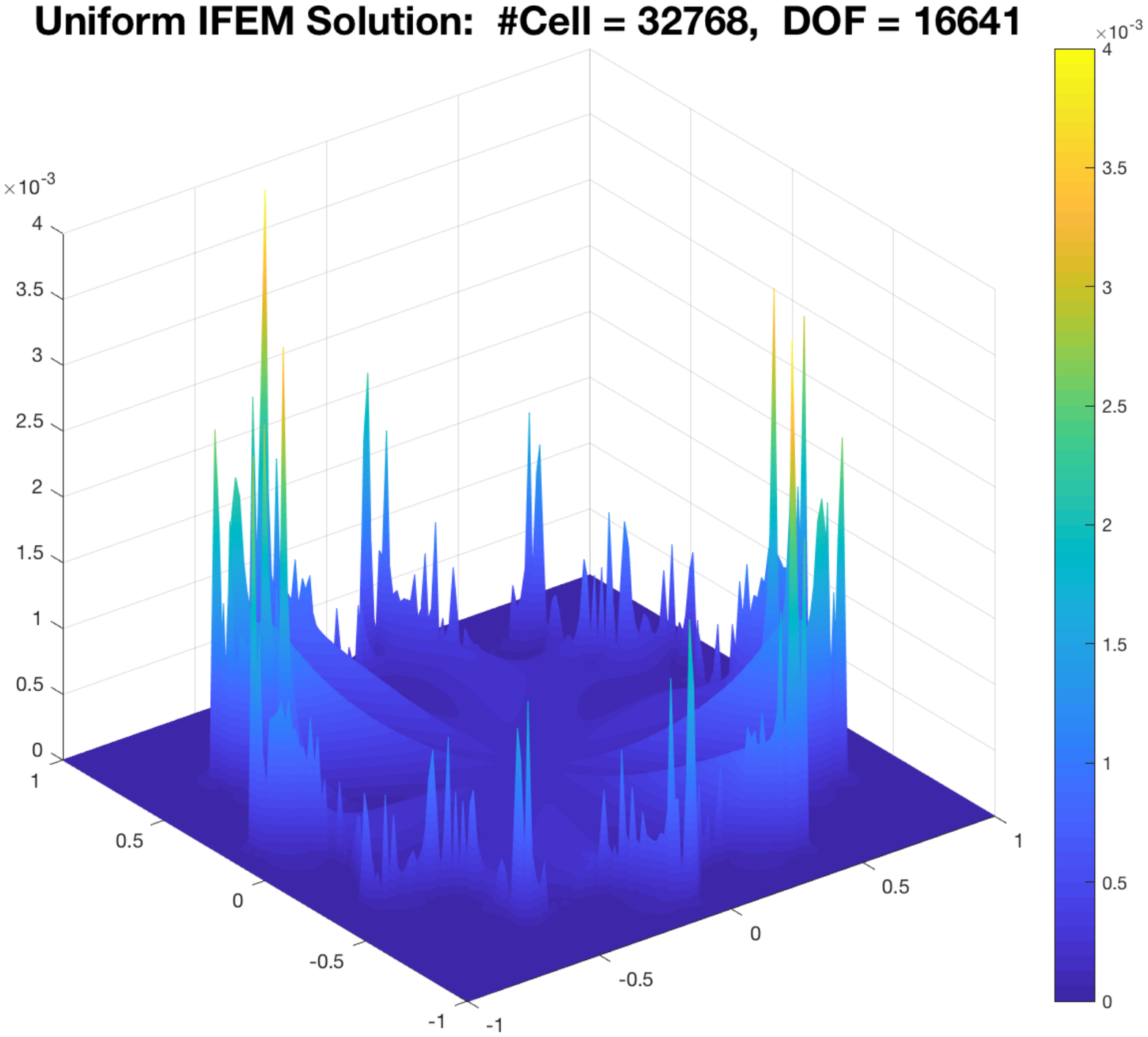}~
\includegraphics[width=0.48\textwidth]{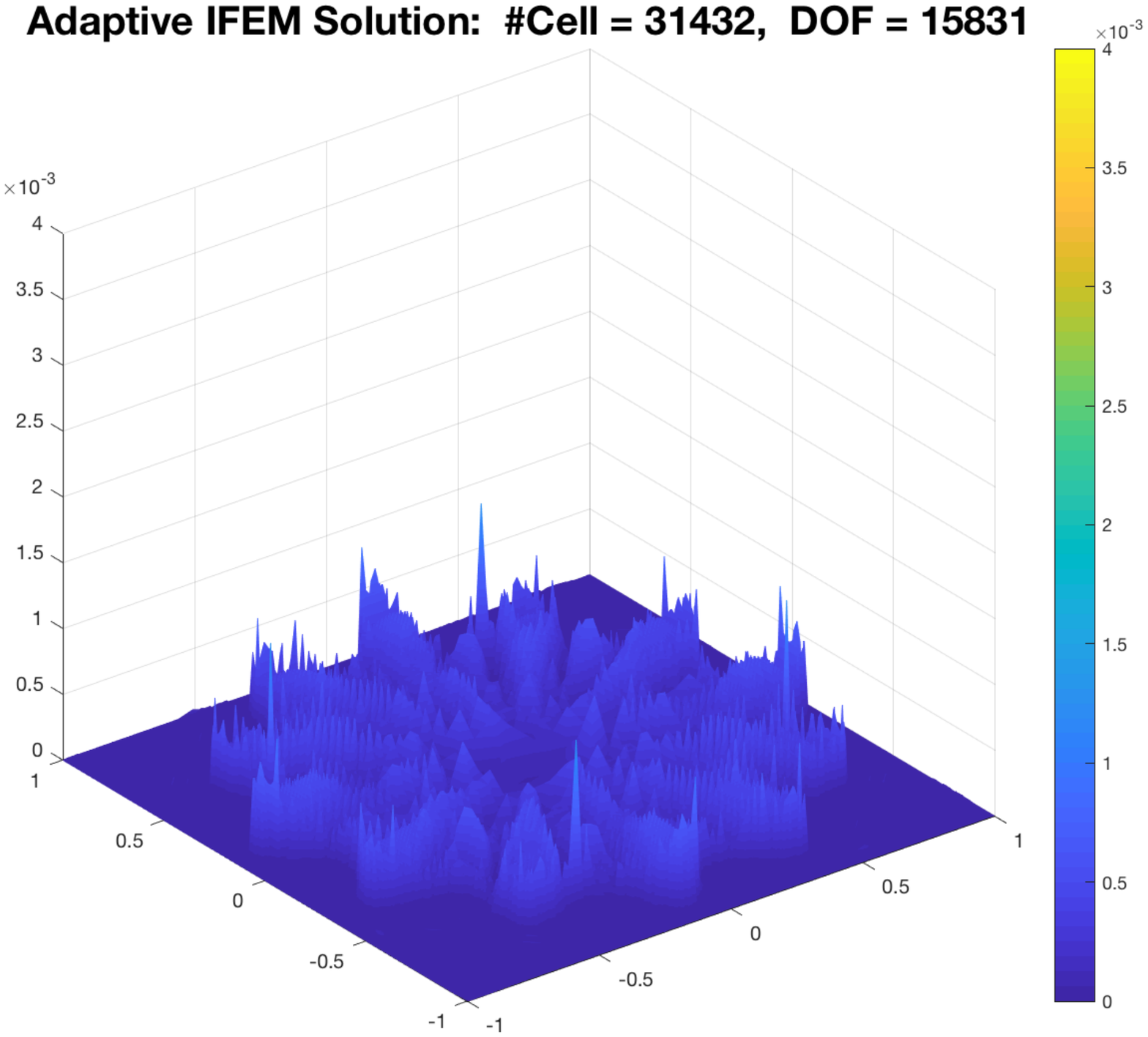}
\end{center}
\caption{Point-wise errors of uniform (left) and adaptive (right)  IFEM for Example \ref{ex4} with similar DOFs for Example 6.4}
\label{fig: err petal}
\end{figure}

\subsection*{Example 6.5 (Effect of the inconsistent error terms)}
\label{ex5}
In this example, we will explore the effect of inconsistent error term for different interface geometries by revisiting the Example 6.1 and Example 6.4 which has a simple ellipse interface and a more complicated petal interface. We consider the following error indicator 
\begin{equation} \label{xi-K}
\begin{split}
	\xi_K^2&=
		\sum \limits_{F \in \cE_K \cap \Ei} \left(\dfrac{h_F}{2} \|\tilde\a_F^{-1/2} j_{n,F}\|_{0,F}^2
			+\dfrac{h_F}{2}  \| \tilde\a_F^{1/2} j_{t,F}\|_{0,F}^2\right) 
						\\
					&+
			\sum \limits_{F \in \cE_K \cap \cE_I  \setminus  \Ei }
			  \dfrac{h_F}{2} \| \tilde\a_F^{-1/2}j_{n,F}\|_{0,F}^2
			  +
		\sum \limits_{F \in \cE_K \cap \cE_N}  h_F \| \tilde\a_F^{-1/2}j_{n,F}\|_{0,F}^2
\end{split}
\end{equation}
which is same as $\eta_K^2$ in \eqref{eta-K} but without the inconsistent error term $\|\tilde \a^{1/2} \nabla u_\cT \|_{S_K}^2$. The global error estimator is defined in the standard way: 
\[
\xi=\left( \sum_{K \in \cT} \xi_K^2\right)^{1/2}.
\]

First we compare the convergences using the error estimator $\xi$ and $\eta$ for Example 6.1. In the left plot of Figure \ref{fig: effect of Sk}, the estimators $\xi$ and $\eta$ are very similar at all meshes, and the errors of the corresponding IFEM solutions guided by these two estimators are also close. We believe that due to the simple and smooth interface shape of this example (an ellipse interface), the inconsistent error term is negligible. Next, we use the new error estimator $\xi$ in Example 6.4 In which the geometry of interface is more complicated. As we can see in the right plot of Figure  \ref{fig: effect of Sk}, the error estimators $\eta$ and $\xi$ and the corresponding IFEM solutions show notable differences, especially on the first few coarse meshes.  In this sense, including the geometrical correction (inconsistent error) term leads to better error indication particularly on coarse meshes. We also note that as the mesh is adaptively refined, the error estimators $\eta$ and $\xi$ become closer, as well as the IFEM solutions leading by these two estimators.

\begin{figure}[ht!]
\begin{center}
\includegraphics[width=0.49\textwidth]{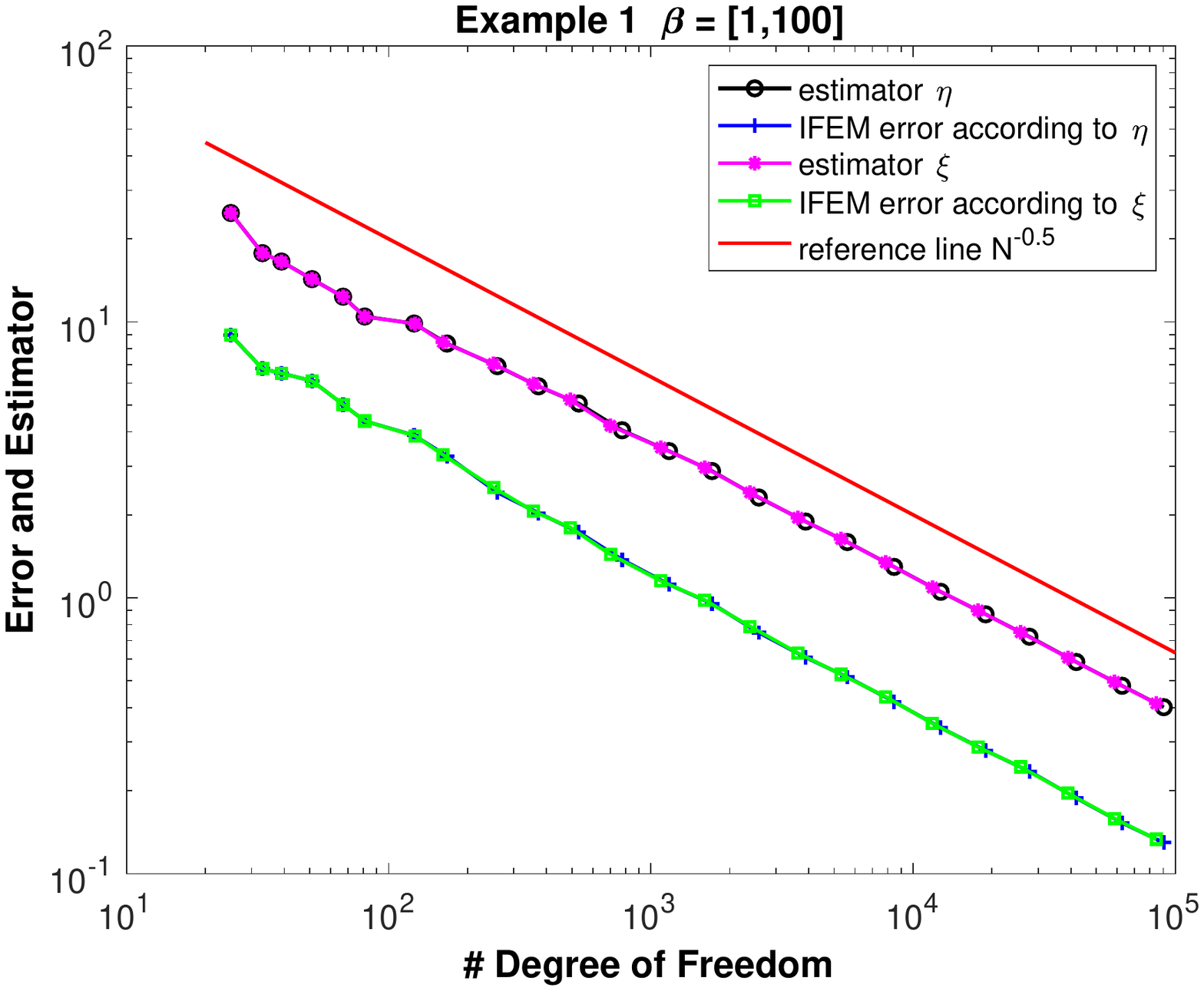}
\includegraphics[width=0.49\textwidth]{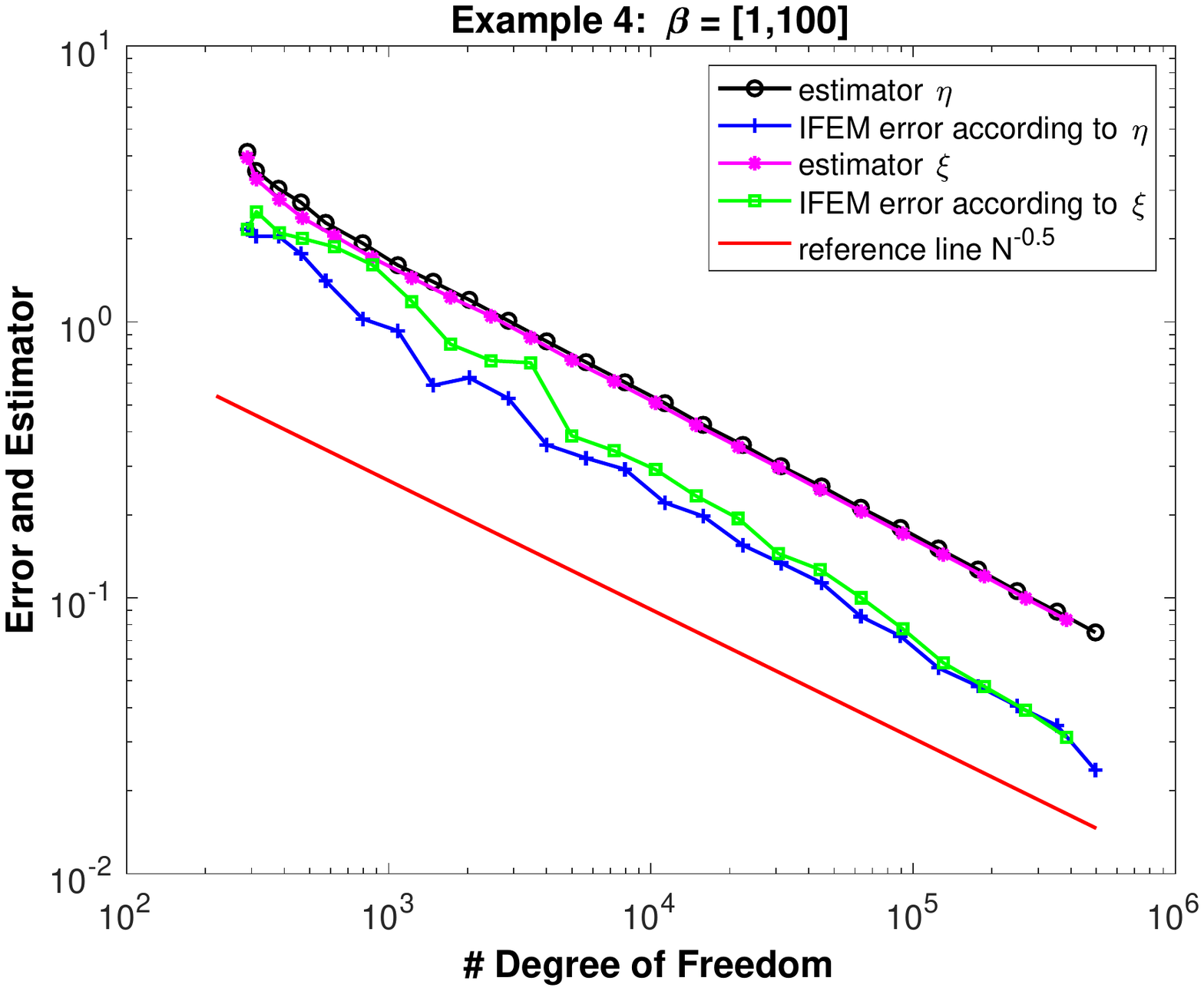}
\end{center}
\caption{Comparison of estimators $\eta$ and $\xi$ for Example 1 (left) and Example 4 (right)}
\label{fig: effect of Sk}
\end{figure}

\subsection*{Example 6.6  (Additional Comments on Large Jump Scenarios)}
\label{ex6}
In this test, we revisit the large jump scenario using test problem from the Example 6.2. To show that necessity of performing adaptive mesh strategy for IFEM, we use the true error in energy norm as our error indicator, i.e.,
\[\eta^*_K = \|\alpha^{1/2}(\nabla u-\nabla_h u_\cT)\|_{0,K},\]
and the global error estimator is the true error in the energy norm, i.e., 
\[
\eta^*=\left( \sum_{K \in \cT} \eta_K^{*2}\right)^{1/2} = \|\alpha^{1/2}(\nabla u-\nabla_h u_\cT)\|_{0,\O}.
\]
An adaptive mesh using the error estimator $\eta^*$ with similar number of triangles are shown in the left plot of Figure \ref{fig: estimator eta-star}. In both cases the refinement is concentrated around the interface, which is similar to the adaptive mesh in Figure \ref{fig: IFEM large}. This again shows that the partially penalized IFEM itself may not be sufficient to obtain accurate solution for extremely large jumps of  the coefficient. In this case, the adaptive mesh refinement is more advantageous. The right plot of Figure \ref{fig: estimator eta-star} shows the convergence of the errors governed by the estimator $\eta$ and the true error $\eta^*$. We can see that both converge in an optimal rate, although using the true error as estimator gives slightly more accurate solutions.

\begin{figure}[ht!]
\begin{center}
\includegraphics[width=0.43\textwidth]{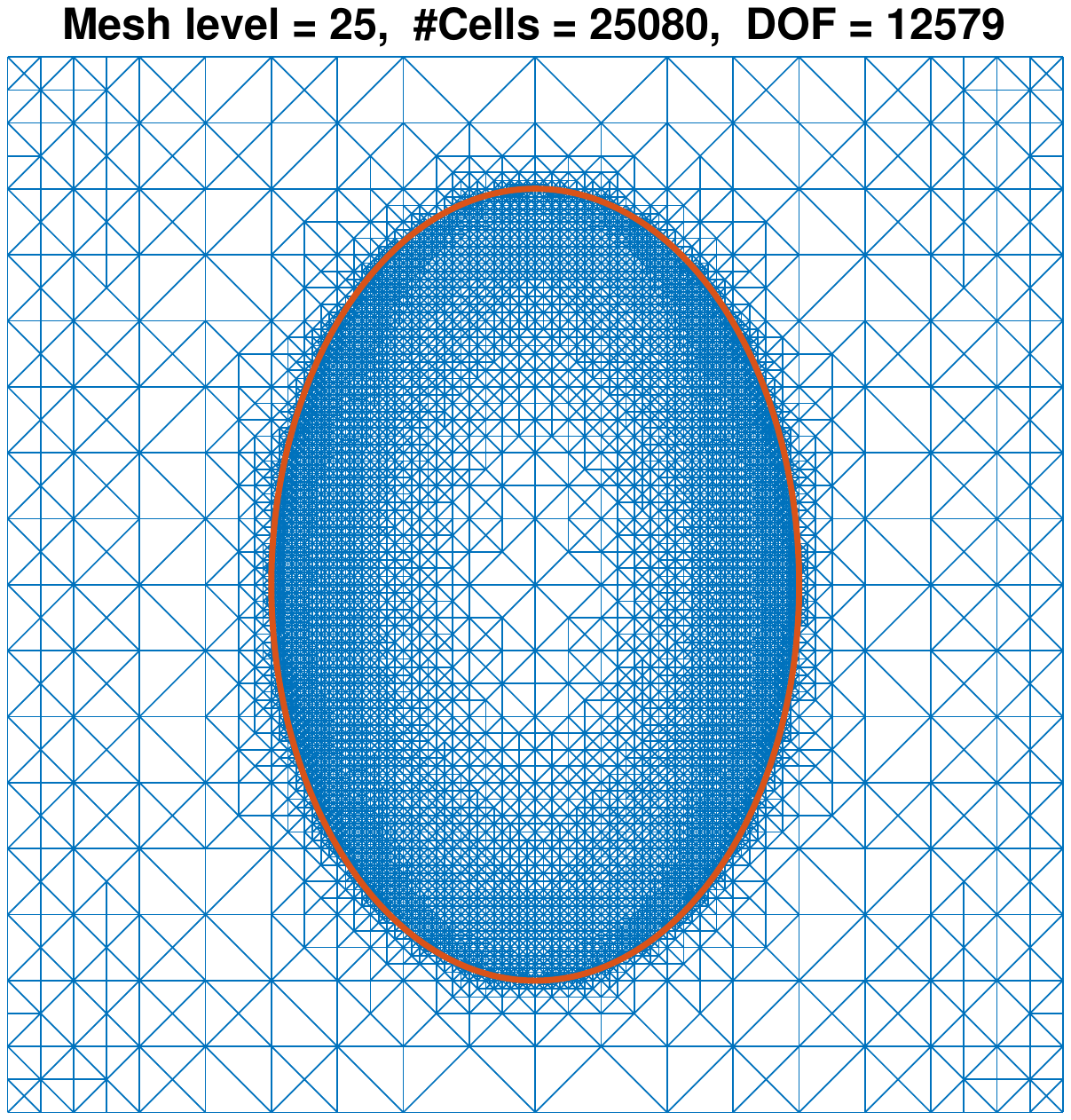}
\includegraphics[width=0.55\textwidth]{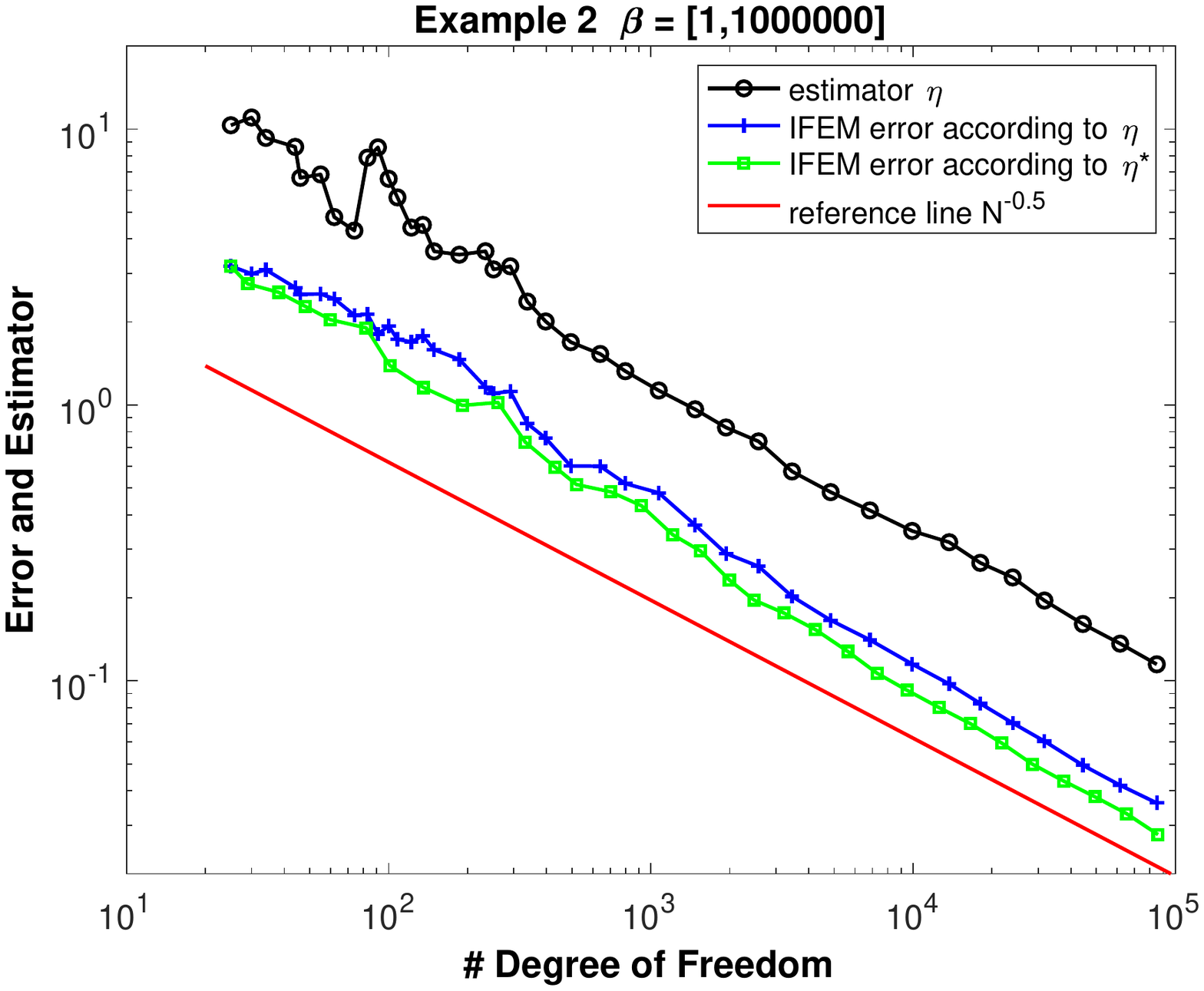}
\end{center}
\caption{Mesh generated by the adaptive IFEM (left) using true error as the error indicators
and convergence of IFEM solutions guided by $\eta$ and by the true error $\eta^*$ for Example 6.2}
\label{fig: estimator eta-star}
\end{figure}

 \providecommand{\bysame}{\leavevmode\hbox to3em{\hrulefill}\thinspace}
\providecommand{\MR}{\relax\ifhmode\unskip\space\fi MR }
\providecommand{\MRhref}[2]{%
  \href{http://www.ams.org/mathscinet-getitem?mr=#1}{#2}
}
\providecommand{\href}[2]{#2}

\end{document}